\PassOptionsToPackage{unicode}{hyperref}
\PassOptionsToPackage{hyphens}{url}
\PassOptionsToPackage{dvipsnames,svgnames,x11names}{xcolor}
\documentclass[aap,preprint]{imsart}

\RequirePackage[T1]{fontenc}
\RequirePackage{amsthm,amsmath}
\RequirePackage[round]{natbib}
\RequirePackage[colorlinks,citecolor=blue,urlcolor=blue,linkcolor=blue]{hyperref}

\input{macro.tex}
\usepackage{booktabs}
\usepackage{enumitem}

\setlist[description]{font=\normalfont\itshape}
\usepackage[utf8]{inputenc}

\usepackage{algorithm}

\usepackage{algpseudocode}
\algrenewcommand\algorithmicif{\textsf{if}}
\algrenewcommand\algorithmicthen{\textsf{then}}\algrenewcommand\algorithmicend{\textsf{end}}\algrenewcommand\algorithmicelse{\textsf{else}}
\makeatletter
\newcounter{ALG@uniq}
\let\OldALG@step\ALG@step
\renewcommand{\ALG@step}{\OldALG@step\refstepcounter{ALG@uniq}}
\makeatother
\ifPDFTeX\else  
\fi
\IfFileExists{upquote.sty}{\usepackage{upquote}}{}
\IfFileExists{microtype.sty}{
  \usepackage[]{microtype}
  \UseMicrotypeSet[protrusion]{basicmath} 
}{}

\usepackage{xcolor,csquotes}
\IfFileExists{xurl.sty}{\usepackage{xurl}}{} 
\hypersetup{
  pdftitle={Sticky CIR process with potential: invariant measure 
       and exact sampling},
  pdfauthor={TS},
  colorlinks=true,
  linkcolor={blue},
  filecolor={Maroon},
  citecolor={Blue},
  urlcolor={Blue},
  pdfcreator={}}

\usepackage{longtable,booktabs,array}
\usepackage{calc} 

\IfFileExists{footnotehyper.sty}{\usepackage{footnotehyper}}{\usepackage{footnote}}
\makesavenoteenv{longtable}
\usepackage{graphicx}

\usepackage{amsthm}
\usepackage[capitalize]{cleveref}

\startlocaldefs
\numberwithin{equation}{section}
\theoremstyle{plain}
\newtheorem{theorem}{Theorem}[section]
\newtheorem{lemma}[theorem]{Lemma}
\newtheorem{corollary}[theorem]{Corollary}

\newtheorem{proposition}[theorem]{Proposition}

\theoremstyle{definition}
\newtheorem{definition}[theorem]{Definition}
\newtheorem{assumption}[theorem]{Assumption}
\newtheorem{remark}[theorem]{Remark}
\endlocaldefs

\crefname{remark}{Remark}{Remarks}
\Crefname{remark}{Remark}{Remarks}

\Crefname{corollary}{Corollary}{Corollaries}

\Crefname{proposition}{Proposition}{Propositions}

\Crefname{assumption}{Assumption}{Assumptions}
\Crefname{appendix}{}{}

\AtBeginDocument{\renewcommand*{\proofname}{Proof}}


\makeatletter
\@ifpackageloaded{caption}{}{\usepackage{caption}}
\@ifpackageloaded{subcaption}{}{\usepackage{subcaption}}
\makeatother

\usepackage{bookmark}
\begin{document}

\begin{frontmatter}

\title{Sticky CIR process with potential: invariant measure and exact sampling}
\runtitle{Sticky CIR: invariant measure and sampling}

\begin{aug}
\author{\fnms{Tony}~\snm{Shardlow}\ead[label=e1]{t.shardlow@bath.ac.uk}}
\runauthor{T. Shardlow}
\address{Department of Mathematical Sciences,
University of Bath,
Bath BA2 7AY,
United Kingdom.
\printead{e1}}
\end{aug}

\begin{abstract}
We study the sticky Cox--Ingersoll--Ross (CIR) process in one dimension,
a diffusion on $[0,\infty)$ with a sticky boundary condition at the origin,
arising as the marginal process in a sparse Bayesian inference framework
based on Hadamard--Langevin dynamics. For the parameter range $\delta\in(1,2)$,
in which the origin is accessible but not absorbing, we prove well-posedness
of the process and uniqueness of its invariant measure, which is a mixture
of a point mass at zero and a weighted gamma-type density on the interior.
We derive an explicit Green's function in terms of
confluent hypergeometric functions, and use this to construct an exact
sampler for the invariant measure in the zero-potential case. For a
non-trivial potential $G$, we establish existence and uniqueness of the
tilted invariant measure via a Girsanov change of measure, and develop
two sampling algorithms: a Metropolis--Hastings corrected sampler that
targets the invariant measure exactly, and a cheaper, biased unadjusted Langevin
algorithm (ULA) for a boundary-clamped variant, for which we prove a first-order
expansion of the stationary bias with an explicit constant, its leading term of
order $h\abs{\log h } $, independent of $\delta$.
Numerical experiments confirm the predicted behaviour: the Metropolis--Hastings sampler achieves the target invariant measure at all step sizes, while the ULA bias follows the proven first-order law, including its constant.
\end{abstract}

\begin{keyword}[class=MSC2020]
\kwd[Primary ]{60J60}
\kwd{65C05}
\kwd[; secondary ]{60J55}
\kwd{60H35}
\end{keyword}

\begin{keyword}
\kwd{sticky diffusion}
\kwd{CIR process}
\kwd{invariant measure}
\kwd{MCMC}
\kwd{unadjusted Langevin algorithm}
\kwd{sparse Bayesian inference}
\end{keyword}

\end{frontmatter}


\section{Introduction}
We establish well-posedness and develop sampling methods for the invariant measure of the following sticky CIR process. 
For potential function $G\colon [0,\infty) \to \mathbb{R}$, stickiness parameter $\mu>0$, dissipation parameter $\lambda>0$, CIR parameter $\delta\in(1,2)$,  inverse temperature $\beta>0$, and one-dimensional Brownian motion $W_t $, consider
\begin{gather}
\begin{split}
  \label{eq-CIR-potential}
  du_t &= \left[\frac{\delta-1}{\beta\, u_t} - \lambda\, u_t - G'(u_t)\right] dt + \sqrt{\frac{2}{\beta}}\,dW_t , \\
  \mathbf{1}_{\{u_t=0\}}\,dt&=\frac{\exp(-\beta G(0))}{\mu}d\ell_t^0(u),
\end{split}
\end{gather}
with initial data $u_0=x>0$. It turns out that solutions $u_t$ are always non-negative and that, for the parameter range $\delta\in (1,2)$, the origin $u=0$ is accessible but not absorbing. The second equation, known as the \enquote{sojourn condition}, describes the stickiness at the boundary $u=0$. The sojourn condition is given in terms of diffusion local time $\ell_t^0$ (see \cref{diffusion-local-time}).

\subsection{Motivation}\label{sec:motivation}

The motivation for this work is the sparse sampling methodology of \citet{Cheltsov2025-no}, who use \cref{eq-CIR} with $\delta=2$ to sample distributions with an $\norm{\cdot}_1$-regularised log-density. For $\delta=2$, the origin $u=0$ is inaccessible, so sample components are small but never exactly zero; the $\norm{\cdot}_1$ prior encourages sparsity, but the dynamics cannot deliver it. Reducing $\delta$ into the range $(1,2)$ weakens the repulsive drift $(\delta-1)/(\beta u)$ enough that $u_t$ reaches the boundary in finite time; the sticky condition there prevents absorption. The resulting invariant measure of \cref{eq-CIR-potential} carries a point mass at zero, delivering genuine sparsity, mixed with a continuous density on $(0,\infty)$, exactly the structure of the spike-and-slab prior \citep{Mitchell1988,GeorgeMcCulloch1993}.

To place this in a Bayesian inverse-problem context, consider recovering an unknown $\vec X \in \mathbb{R}^d$ from observations $\vec y = \vec O(\vec X)$ with likelihood density $\rho(\vec y - \vec O(\vec X))$. Following \citet{Cheltsov2025-no}, write $\vec X = \vec u \odot \vec v$ as a Hadamard product with $(\vec u, \vec v) \in (\mathbb{R}^+)^d \times \mathbb{R}^d$, and place a product prior $\pi_0$ on $(\vec u,\vec v)$: each $u$-component drawn from the invariant measure of \cref{eq-CIR-potential} with $G=0$ (giving sparse $\vec u$, hence sparse $\vec X$), and each $v$-component Gaussian. Writing the likelihood as $\rho(\vec y - \vec O(\vec u \odot \vec v)) = \exp(-\beta G(\vec u \odot \vec v))$ for a potential $G$, the Bayesian posterior is the Gibbsian reweighting of the prior:
\[
  \frac{d\pi}{d\pi_0}(\vec u,\vec v)
   \propto  \exp\bigl(-\beta G(\vec u \odot \vec v)\bigr).
\]

The central observation behind our approach is that this same Gibbsian reweighting also describes the invariant measure of the one-dimensional \cref{eq-CIR-potential} (\cref{thm-inv-potential}). In one dimension, sampling the posterior therefore reduces to simulating the SDE: the $G\equiv 0$ sticky CIR provides exact draws from the prior, and the potential $G$ enters as a Radon--Nikodym factor handled either by Metropolis--Hastings correction or by an unadjusted Langevin step. The present paper develops this one-dimensional case in detail; the full $d$-dimensional sampler, which must couple the $\vec u$ and $\vec v$ components, is left for future work.

Beyond this sampling motivation, the paper contributes to a sparsely populated area: the \emph{numerical} approximation of sticky SDEs.  Sticky diffusions are classical objects analytically \citep{RogersWilliams2000, BorodinSalminen2002}, but discretisation schemes for them and their convergence theory remain scarce, the recent work of \citet{Sharma2025} being one of the few dedicated treatments.  The novelty of our contribution is twofold: the exact, resolvent-based samplers for the invariant measure (\cref{alg:sticky-cir,alg:mcmc}), and the sharp first-order law for the bias of the unadjusted scheme (\cref{thm:sharp-rate}); the well-posedness and the invariant-measure characterisation adapt the established sticky-diffusion theory \citep{Peskir2022,PeskirRoodman2023}.  This bias law also places the sticky CIR within the error theory for unadjusted discretisations: for a smooth potential on $\mathbb{R}^d$, the Euler--Maruyama discretisation of the overdamped Langevin diffusion has a first-order stationary bias $\pi_h(f)-\pi(f)=C(f)\,h+O(h^2)$ with a smooth leading coefficient \citep{TalayTubaro1990}. The associated unadjusted Langevin algorithm (ULA) is geometrically ergodic with an $O(h)$ bias under suitable convexity \citep{RobertsTweedie1996,DurmusMoulines2017}.  The sticky CIR falls outside this $O(h)$ law on two counts---the diffusion degenerates at an \emph{accessible} boundary, and its invariant measure carries an atom alongside a cusped interior density---so the stationary bias is instead of exact order $h\abs{\log h}$, strictly larger than $O(h)$, with the classical rate recovered precisely when the boundary is inactive.

\subsection{Outline of the paper}
The remainder of this paper is organised as follows.
\cref{sec:background} collects background material on one-dimensional
diffusions, including the scale function, speed measure, and diffusion local
time, and reviews the well-posedness of the instantaneously reflecting CIR
process (\cref{thm-wp}), which serves as the foundation for
the sticky construction.
\cref{sticky-cir-process} introduces the sticky CIR process with
zero potential: well-posedness is proved via an It\^o--McKean time-change
(\cref{thm-sticky-wp}), the unique invariant measure is
characterised as a mixture of a point mass and a weighted gamma-type density
(\cref{thm-ergodicity}), and the Wentzell boundary condition on the
transition density is identified (\cref{thm-pdf0}).
\cref{sec-sampling} derives  the exact transition distribution at
exponential times (\cref{thm:transition-dist}) and an exact sampler for the
zero-potential invariant measure (\cref{alg:sticky-cir}).
\cref{sec:potential} extends the analysis to a non-trivial potential $G$.
Existence and uniqueness of the tilted invariant measure $\pi$ are established
via a Girsanov change of measure (\cref{thm-exist-potential,thm-inv-potential}),
and two samplers are developed: a Metropolis--Hastings scheme targeting $\pi$
exactly (\cref{alg:mcmc}) and a cheaper, biased unadjusted Langevin scheme
(\cref{alg:ula}).  The analysis of the ULA bias is  a consistency--stability argument, which provides a first-order expansion pinning
down the exact rate and its leading constant (\cref{thm:sharp-rate}); the
uniform-in-$h$ geometric ergodicity underlying these bounds is established for
non-convex potentials with bounded gradient
(\cref{lem:contraction-nonconvex}). 
Numerical experiments in \cref{sec:experiments}
confirm the theoretical predictions and compare the two algorithms across a
grid of potentials, stickiness parameters, and step sizes.  
Appendix~\ref{app:ula-bias} houses many of the proofs and supporting lemmas.
\section{Background}\label{sec:background}

We collect standard results on one-dimensional diffusions that
underpin the analysis, including the well-posedness of the reflecting CIR process in \cref{thm-wp}; they are recalled to fix notation
and to make the paper self-contained.  

\subsection{Scale function, speed measure, and local time}\label{review-regular-diffusion-processes}

\begin{definition}[regular
diffusion]\label{def-regular-diffusion}

A \emph{regular diffusion} on an interval \(I \subseteq \mathbb{R}\),
in the sense of Itô--McKean, is a continuous strong Markov process
\(X = (X_t)_{t \geq 0}\) where every point of \(I\) is \emph{accessible} from every other point with positive probability. In terms of  $\tau_y=\inf\{t>0\colon X_t=y\}$, this means, for all \(x\) in the interior of $I$ and all \(y  \in I\), that
  \(
  \mathbb{P}(\tau_y<\infty \mid X_0=x) > 0
  \). See \citep[Definition~45.2]{RogersWilliams2000} or \cite[Chapter~15.1]{karlintaylor1981} or \cite[Chapter VII, before Proposition~3.1]{RevuzYor1999}.
\end{definition}
Every regular diffusion on \(I\) is associated to a strictly increasing function \(s\colon I \to \mathbb{R}\) (the \emph{scale
  function}) and a Radon measure \(m\) on \(I\) with full support (the
  \emph{speed measure}). Explicit definitions for $s$ and $m$ are given in terms of hitting probabilities and expected occupation times in \cite[Section~4.1]{ItoMcKean1965} or \cite[Proposition~3.2 and Theorem~3.6]{RevuzYor1999}. Using $s$ and $m$, the generator \(\mathcal{L}\) of a regular diffusion \(X\) is
\begin{equation}\label{eq:generator-scale-speed}
  \mathcal{L}f = \frac{d}{dm}\!\left(\frac{df}{ds}\right),
\end{equation}
for \(f\) in the domain of \(\mathcal{L}\), where $df/ds$ denotes the derivative of $f$ with respect to the scale function (i.e., $\lim_{y\to x}(f(y)-f(x))/(s(y)-s(x))$), and $d/dm$ denotes the Radon--Nikodym derivative with respect to the speed measure.   This operator is self-adjoint in $L^2(m)$ and the Lebesgue adjoint operator is \begin{equation}
  \label{eq-adjoint-generator}
    (\mathcal{L}^*f)(x) = \frac{d}{dx}\pp{\frac{1}{s'(x)}\frac{d}{dx}\frac{f(x)}{m'(x)}},
  \end{equation}
  where $m'(x)$ is the Lebesgue density of the speed measure $m$ and $s'(x)$ is the derivative of the scale function.     In the case of a smooth drift $b\colon I\to \mathbb R$ and smooth non-zero diffusion $\sigma\colon I\to \mathbb R$, the diffusion with generator $\mathcal L=\frac 12 \sigma(x)^2\frac{\partial^2}{\partial x^2}+b(x)\frac{\partial}{\partial x}$ has, for $x$ in the interior of $I$,  \begin{equation}
    \label{eq-scale-speed-sde}
    s(x) = \int^x \exp\left(-2\int^y \frac{b(z)}{\sigma^2(z)}\,dz\right) dy,  \quad m'(x)=\frac{2}{\sigma^2(x)s'(x)},
  \end{equation}
  where the scale function is only defined up to an additive constant. See
 \cite[Chapter~VII, Theorem~3.12 and Exercise~3.20]{RevuzYor1999} or \cite[Section~4.2]{ItoMcKean1965}.

Intuitively, the scale function $s$ captures the drift of the process so that  the transformed process $s(X_t)$ is a local martingale on $s(I)$. As a consequence, by optional stopping, the hitting times $\tau_a,\tau_b$ to levels $a<x<b$ satisfy
\[
  \mathbb{P}(\tau_a < \tau_b \mid X_0 = x) = \frac{s(b) - s(x)}{s(b) - s(a)},
\]
so $s$ acts as a coordinate in which exit probabilities are linear in position. The speed measure $m$ captures how long the process lingers in each region; areas with high density with respect to $m$ are visited more frequently and for longer durations.  In particular, when the speed measure has an atom at the boundary, that point is said to be  \emph{slowly reflecting} in \cite[Chapter VII, Definition 3.11]{RevuzYor1999} or \emph{sticky}, where the process sojourns or temporarily remains (in contrast to absorbing boundary points where the process remains forever). The speed measure also yields the invariant measure: if $m$ is finite, then the invariant measure is $m$ normalised to be a probability measure.

\begin{remark}
  Some authors (e.g., \cite[Chapter~15]{karlintaylor1981} or \cite[Theorem 52.1]{RogersWilliams2000}) define the speed measure with an additional factor of $1/2$, so $m'(x) = 1/(\sigma^2(x)s'(x))$. This changes some formulas by a factor of $2$ but does not affect the underlying theory.
\end{remark}

The sojourn condition is expressed in terms of diffusion local time \cite[Theorem 49.1]{RogersWilliams2000}.
It is the natural local time for diffusions and is defined via the speed measure.
\begin{definition}[diffusion local time]\label[definition]{diffusion-local-time}
  For a diffusion $X$ with speed measure $m$, the \emph{diffusion local time} $\ell_t^a$ at level $a$ is defined by
\[
\ell^a_t(X) = \lim_{\varepsilon \downarrow 0} \frac{1}{m((a-\varepsilon, a+\varepsilon))} \int_0^t \mathbf{1}_{\{|X_s - a| < \varepsilon\}}\,ds.
\]
The following occupation-time formula holds: 
for measurable $f \geq 0$,
\begin{equation}\label{otf}
\int_0^t f(X_s)\,ds = \int f(a)\,\ell^a_t(X)\,m(da).
\end{equation}
\end{definition}

When $m$ has an atom and the process sojourns at the boundary, this definition applies naturally. If $m(\{0\}) = 1/\mu$, for some $\mu>0$, then taking the limit gives
\[
\ell^0_t(X) = \mu \int_0^t \mathbf{1}_{\{X_s = 0\}}\,ds.
\]
This is often written $\mathbf{1}_{\{X_t = 0\}}\,dt = \frac{1}{\mu}\,d\ell^0_t(X)$ and known as the \enquote{sojourn condition}. It says the length of the sojourn at the boundary is determined by the diffusion local time and stickiness parameter $\mu$; the limit \(\mu \to 0\) gives absorption and infinite time at the boundary, and
\(\mu \to \infty\) gives instantaneous reflection and zero time at the boundary. 

\begin{remark}
  Diffusion local time $\ell_t^a$ differs from the  local time $L_t^a$ defined via the Tanaka formula~\cite[Chapter VI, Corollary 1.2]{RevuzYor1999}. In that case, the occupation-time formula \cite[Chapter VI, Corollary 1.6]{RevuzYor1999} holds with respect to the quadratic variation: for a continuous semimartingale $X$ and measurable $f\geq 0$,
$$
\int_0^t f(X_s)\,d\langle X\rangle_s=\int_{\mathbb R} f(a) L_t^a(X)\,da.
$$
In the interior, the two local times are related by $\ell^a_t = s'(a) L^a_t / 2$, where $s$ is the scale function.
\end{remark}

\subsection{Reflecting CIR
process}\label{review-reflecting-cir-process}

The Cox--Ingersoll--Ross (CIR) process  \citep{CoxIngersollRoss1985} is defined by
\begin{equation}\label{eq-CIR}{
      du_t=\left[\frac{\delta-1}{\beta \,u_t} - \lambda\, u_t \right]\,dt + \sqrt{\frac{2}{\beta}}\,dW_t ,\qquad u_0=x>0,\\
}\end{equation} with parameters \(\lambda,\beta>0\) and
\(\delta\in (1,2)\) and a standard  Brownian motion \(W_t \).
By \cref{eq-scale-speed-sde} with $b(x) = (\delta-1)/(\beta x) - \lambda x$ and $\sigma^2 = 2/\beta$, the scale and speed densities for \cref{eq-CIR} are
\begin{equation}\label{eq:scale-speed-cir}
  s'(x) = x^{1-\delta}\exp(\lambda\beta x^2/2), \qquad m'(x) = \beta x^{\delta-1}\exp(-\lambda\beta x^2/2), \qquad x \in (0,\infty).
\end{equation}
Near the origin, $s'(x) \sim x^{1-\delta}$ is integrable if and only if $\delta < 2$, in which case $s(0+) > -\infty$ and $\int_0^\varepsilon (s(x)-s(0+))\,m(dx) < \infty$. Feller's test \citep[Lemma~6.1]{karlintaylor1981} then makes the boundary at $0$ accessible, which is essential for the sticky case that we focus on in this paper.  For $\delta \ge 2$, the boundary is inaccessible.  The lower endpoint $\delta > 1$ ensures $m'(x) \to 0$ as $x \to 0$, so the interior part of the invariant measure vanishes near the boundary.  We restrict to $\delta \in (1,2)$ throughout.

We show this equation has a
well-defined solution \(u_t\ge 0\).

\begin{theorem}[well-posedness of reflecting CIR process]\label{thm-wp}
There exists a unique strong solution \(u_t\) of \cref{eq-CIR}
for \(\delta\in(1,2)\). The solution \(u_t\) is a non-negative regular diffusion
on $[0,\infty)$, with the scale and speed densities
of \eqref{eq:scale-speed-cir}; the boundary \(u=0\) is accessible and reflected
instantaneously, equivalently the speed measure has no atom there, $m(\{0\}) = 0$.
\end{theorem}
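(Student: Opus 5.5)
The plan is to reduce \cref{eq-CIR} to a squared Bessel-type process, for which pathwise uniqueness is classical, and then read off the boundary behaviour from the scale and speed densities \eqref{eq:scale-speed-cir}.

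\emph{Step 1: transformation to a square-root SDE.} Set $X_t=\beta u_t^2/2$. Formally, It\^o's formula gives
\[
dX_t=\bigl(\delta-2\lambda X_t\bigr)\,dt+2\sqrt{X_t}\,dW_t ,
\]
which is a CIR equation of squared Bessel type with dimension $\delta$. Its coefficients are Lipschitz drift and H\"older-$1/2$ diffusion. By Yamada--Watanabe (see \cite[Chapter IX, Theorem~3.5]{RevuzYor1999}), it has a pathwise unique, non-negative strong solution for every $X_0\ge0$. The argument is the usual one: weak existence via continuity of coefficients, then pathwise uniqueness with the function $\rho(r)=r$. Non-negativity holds since $\delta>0$ (comparison with the case $\delta=0$).

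\emph{Step 2: definition of the solution.} Define $u_t=\sqrt{2X_t/\beta}\ge0$. We must check that $u$ solves \cref{eq-CIR} in the integrated sense, i.e.\ that $\int_0^t u_s^{-1}\,ds<\infty$ a.s. For $\delta>1$ this follows from the occupation-time formula \eqref{otf}. Since $m'(x)=\beta x^{\delta-1}e^{-\lambda\beta x^2/2}$, we have $\int_0^t u_s^{-1}ds=\int a^{-1}\ell^a_t\,m(da)$, and $a^{-1}m'(a)\sim a^{\delta-2}$ is integrable at $0$ exactly when $\delta>1$. Local times $\ell^a_t$ are continuous and bounded on compacts. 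Applying It\^o to $\sqrt{X}$ on $\{X>0\}$ and using that $\{t:X_t=0\}$ has zero Lebesgue measure then gives \cref{eq-CIR}. The Tanaka/local time term at $0$ vanishes because $\sqrt{x}$ composed with the $X$-local time at $0$ yields $L^0(X)=0$ for $\delta>0$.

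Conversely, any non-negative solution $u$ of \cref{eq-CIR} with integrable drift maps to a solution $X=\beta u^2/2$ of the square-root SDE. This map is bijective on non-negative paths, so strong existence and uniqueness transfer. I expect this careful justification of It\^o's formula across the singularity at $0$ to be the main obstacle: both the integrability of $1/u$ and the absence of a boundary local-time term. If the occupation-time route is awkward, I would instead cite the Bessel-process treatment in \cite[Chapter XI, \S1]{RevuzYor1999} and add the $-\lambda u$ drift by Girsanov, which is locally valid.

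\emph{Step 3: boundary classification.} The strong Markov property and continuity follow from pathwise uniqueness (Yamada--Watanabe). Regularity on the interior is standard for nondegenerate smooth coefficients, with scale and speed given by \eqref{eq-scale-speed-sde}, which is exactly \eqref{eq:scale-speed-cir}.

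At the boundary, $\delta<2$ gives $s(0+)>-\infty$ and $\int_0^\varepsilon(s(x)-s(0+))m(dx)<\infty$, so $0$ is accessible (Feller's test), and $0$ is reached from every $x>0$. Infinity is natural and non-explosive because $s(\infty)=\infty$ and $m$ is finite.

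To see that the reflection is instantaneous, note that the time spent at $0$ is $\int_0^t\mathbf 1_{\{X_s=0\}}ds=\int_0^t\mathbf 1_{\{X_s=0\}}\,d\langle X\rangle_s/(4X_s)$, interpreted via occupation density. Equivalently, it equals $0$ because $dX=\delta\,dt$ on $\{X=0\}$ forces $\delta\int\mathbf 1_{\{X_s=0\}}ds=\int \mathbf1_{\{X_s=0\}}dX_s=0$, by the occupation-time formula for semimartingales with $L^0$ vanishing. Hence $m(\{0\})=0$, and the speed measure consists only of the density $m'$, establishing the stated boundary behaviour.
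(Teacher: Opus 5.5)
Your proposal follows the paper's route: square the process to get a mean-reverting squared Bessel process of dimension $\delta$ (your $X=\beta u^2/2$ is just a rescaling of the paper's $Y=u^2$), use its classical strong well-posedness and instantaneous reflection, and transfer back through the square root, with zero occupation time at $0$ giving $m(\{0\})=0$. You also supply details the paper only cites or skips: Yamada--Watanabe, Feller's test, $L^0(X)=0$, and the integrability of $1/u$ for $\delta>1$.

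The one loosely justified step is It\^o's formula for $\sqrt{X}$ across $0$. Tanaka's formula does not apply there, because $\sqrt{\cdot}$ is not a difference of convex functions. Instead, apply It\^o to $\sqrt{X+\varepsilon}$ and let $\varepsilon\downarrow0$ by dominated convergence, using $\int_0^t X_s^{-1/2}\,ds<\infty$ and $\int_0^t\mathbf 1_{\{X_s=0\}}\,ds=0$.
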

\begin{proof}
By It\^o's formula, $Y_t = u_t^2$ satisfies
\begin{equation}\label{eq-CIR2}
dY_t = \frac{2\delta}{\beta}\,dt - 2\lambda Y_t\,dt + \sqrt{\frac{8 Y_t}{\beta}}\,dW_t.
\end{equation}
That is, $Y_t$ is a mean-reverting squared Bessel process of dimension $\delta$.  By the classical theory of squared Bessel processes with $\delta\in(1,2)$ \citep[Section~3]{GoingJaeschkeYor2003}, $Y_t$ admits a unique non-negative strong solution and is a regular diffusion on $[0,\infty)$ whose boundary at $0$ is reached and reflected instantaneously: $\int_0^t \mathbf{1}_{\{Y_s = 0\}}\,ds = 0$ a.s.  Setting $u_t = \sqrt{Y_t}$, an increasing homeomorphism of $[0,\infty)$, yields a unique non-negative strong solution of \cref{eq-CIR} that inherits the regularity of $Y_t$, with $\int_0^t \mathbf{1}_{\{u_s=0\}}\,ds = 0$, so the occupation-time formula \cref{otf} forces $m(\{0\}) = 0$.
\end{proof}

\section{Sticky CIR process (no potential)}\label{sticky-cir-process}

We turn to the CIR process with a sticky boundary condition (\cref{eq-CIR-potential} with $G=0$), 
\begin{equation}\label{eq-CIR4}{
\begin{aligned}
      du_t&=\left[\frac{\delta-1}{\beta \,u_t} - \lambda\, u_t \right]\mathbf{1}_{\{u_t>0\}}\,dt + \sqrt{\frac{2}{\beta}}\,dW_t ,\\
\mathbf{1}_{\{u_t=0\}}\,dt&=\frac{1}{\mu}d\ell_t^0(u),
\end{aligned}
}\end{equation} 
subject to the initial condition $u_0=x>0$. The second line is the sojourn condition where $\ell_t^0$ is the diffusion
local time at $0$ and $\mu>0$ controls the degree of stickiness (see \cref{diffusion-local-time}).

\begin{theorem}[well-posedness of the sticky CIR
process]\label{thm-sticky-wp}

The sticky CIR process defined by \cref{eq-CIR4} has a unique
weak solution \(u_t\) for \(\delta\in(1,2)\) and  \(\mu> 0\). The solution \(u_t\) is a regular
diffusion process with a Feller transition semigroup
$P_t\varphi(x)\coloneqq\mathbb{E}[\varphi(u_t)\mid u_0=x]$: it is a strongly continuous
contraction semigroup on $C_0([0,\infty))$, the continuous functions on $[0,\infty)$
vanishing at infinity, and $x\mapsto P_t\varphi(x)$ is continuous for every bounded
continuous $\varphi$.

\end{theorem}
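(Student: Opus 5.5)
The plan is to build the sticky process from the reflecting CIR process of \cref{thm-wp} by an It\^o--McKean time change, and then to check that the result solves \cref{eq-CIR4} and is the only process that does.

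\textbf{Construction.} Let $X$ be the reflecting CIR process from \cref{thm-wp}, with speed measure $m$ (density \eqref{eq:scale-speed-cir}, no atom at $0$) and diffusion local time $\ell^0_t(X)$. Define the additive functional
\[
A_t = t + \frac{1}{\mu}\ell^0_t(X).
\]
It is continuous and strictly increasing, so its inverse $T_t$ is continuous. Set $u_t = X_{T_t}$. By the occupation-time formula \cref{otf}, $A$ is the additive functional associated with the modified speed measure $\tilde m = m + \mu^{-1}\delta_0$. Standard time-change theory \citep[Ch.~5]{ItoMcKean1965}, \citep[V.47--V.50]{RogersWilliams2000} then gives that $u$ is a regular diffusion on $[0,\infty)$ with scale $s$ and speed $\tilde m$. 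Moreover $\tilde m(\{0\})=1/\mu$, so the remark following \cref{diffusion-local-time} yields the sojourn condition $\mathbf 1_{\{u_t=0\}}dt=\mu^{-1}d\ell^0_t(u)$.

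\textbf{SDE representation.} Off $\{u=0\}$ we have $dT_t = dt$. Hence $M_t = u_t - u_0 - \int_0^t b(u_s)\mathbf 1_{\{u_s>0\}}ds$ is obtained by time-changing the martingale part of $X$ together with its boundary term. Its quadratic variation is $\frac{2}{\beta}\int_0^t \mathbf 1_{\{u_s>0\}}ds$. The remaining term is the pushing term of $X$ at $0$; because $\delta>1$, the drift $(\delta-1)/(\beta u)$ is integrable in time and no singular push survives. I would verify this through $Y=X^2$ and \eqref{eq-CIR2}. On the sticky set, Brownian motion enters only through an enlargement of the probability space: take an independent Brownian motion $B$ and set
\[
W_t=\int_0^t \mathbf 1_{\{u_s>0\}}\sqrt{\beta/2}\,dM_s+\int_0^t\mathbf 1_{\{u_s=0\}}dB_s,
\]
which is a Brownian motion by L\'evy's characterisation. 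This produces a weak solution. (Strong existence is not expected for sticky equations.)

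\textbf{Uniqueness and Feller property.} I expect uniqueness in law to be the main obstacle. The plan is to show that any weak solution $u$ is a continuous strong Markov process whose generator acts as $\frac{d}{d\tilde m}\frac{d}{ds}$ on $C^2$ functions satisfying the Wentzell condition $\mu f''$-type relation $\mathcal Lf(0)=\mu\,\frac{df}{ds}(0+)$. Applying It\^o's formula to $f(u_t)$, with the sojourn condition used to identify the boundary contribution, shows that $f(u_t)-\int_0^t\mathcal L f(u_s)ds$ is a martingale for such $f$. The martingale problem for this Feller generator is well posed, because the resolvent equation $(\alpha-\mathcal L)f=g$ with the Wentzell condition is uniquely solvable; one can write the solution explicitly via increasing and decreasing eigenfunctions of the generator. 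Following \citet{Peskir2022}, this gives uniqueness in law.

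The Feller properties come from the same resolvent. $P_t$ maps $C_0$ into $C_0$ because $\infty$ is natural: the $-\lambda x$ drift means the process does not escape. Strong continuity follows from path continuity and dominated convergence. Continuity of $x\mapsto P_t\varphi$ for bounded continuous $\varphi$ follows from continuity of the hitting-time and time-change construction in the starting point, or equivalently from the regular-diffusion Feller theory in \citet[Ch.~VII]{RevuzYor1999}.
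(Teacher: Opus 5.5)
Your construction is the same as the paper's: you time-change the reflecting CIR of \cref{thm-wp} by $A_t=t+\mu^{-1}\ell^0_t$. Your Feller discussion is at the same level as the paper's appeal to the It\^o--McKean classification. The gap is in the uniqueness step.

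For $\delta\in(1,2)$ the scale density $s'(x)=x^{1-\delta}e^{\lambda\beta x^2/2}$ blows up at $0$, so every $f\in C^1([0,\infty))$ has $\frac{df}{ds}(0^+)=\lim_{x\downarrow 0}f'(x)/s'(x)=0$. On $C^2$ test functions the Wentzell condition therefore collapses to $\mathcal Lf(0)=0$ and carries no information about $\mu$. The resulting martingale problem is solved by the sticky process for every $\mu\in(0,\infty)$, by the reflecting CIR, and by the CIR absorbed at $0$. So it is not well posed, and these functions are not a core for the sticky generator. The well-posedness you derive for the full generator from the resolvent therefore does not transfer to them.

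For the same reason your It\^o step is empty as described. For $f\in C^2$, It\^o's formula produces no boundary term, so the sojourn condition never enters. The functions that actually carry flux at $0$ have the cusp $f(x)-f(0)\sim c\,x^{2-\delta}$ with $f'(0^+)=+\infty$ (compare \cref{lem:resolvent-regularity}), and It\^o's formula does not apply to them. To verify the martingale property for them you would have to pass to natural scale $Y_t=s(u_t)$ and show it is a semimartingale reflected at $s(0)$. You would then have to identify its local time there with a multiple of $\mu\int_0^t\mathbf 1_{\{u_r=0\}}\,dr$ via the sojourn condition. That identification is the real content of uniqueness, and it is missing from your sketch.

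The paper avoids generator domains by inverting the time change, which uses only ingredients you already have. Given any weak solution $u$, set $T_t=\int_0^t\mathbf 1_{\{u_r>0\}}\,dr$, $A=T^{-1}$ and $\tilde u_t=u_{A_t}$.
\begin{itemize}
\item The time-changed martingale part has bracket $\frac{2}{\beta}t$, so $\tilde u$ solves \cref{eq-CIR}.
\item Pathwise uniqueness from \cref{thm-wp} fixes the law of $\tilde u$.
\item The sojourn condition reconstructs the clock as $A_t=t+\mu^{-1}\ell^0_t(\tilde u)$, so the law of $u_t=\tilde u_{T_t}$ is determined.
\end{itemize}

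This argument needs the noise in \cref{eq-CIR4} to be $\sqrt{2/\beta}\,\mathbf 1_{\{u_t>0\}}dW_t$. That is in fact what your enlarged Brownian motion $W$ delivers, since the $dB$ part never appears in $du_t$. You should say this explicitly. With constant noise as literally written, the occupation-times formula gives $\frac{2}{\beta}\int_0^t\mathbf 1_{\{u_r=0\}}\,dr=\int_0^t\mathbf 1_{\{u_r=0\}}\,d\langle u\rangle_r=0$, so no solution can be sticky. The indicator reading is therefore the only consistent one.
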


\begin{proof}
Sticky Feller (square-root) diffusions are constructed and their boundary behaviour classified by \citet{PeskirRoodman2023}; we give a self-contained construction tailored to the CIR drift. The It\^o--McKean time change of \citet[Section~2]{Peskir2022} uses the base process only through its being a regular diffusion with an instantaneously reflecting, regular boundary at $0$ carrying a finite boundary local time---never through the specific Bessel drift. These properties hold for the reflecting CIR process $\tilde u_t$, including its mean-reversion $-\lambda u$ and singular $1/u$ drift, by \cref{thm-wp}; the construction therefore transfers, with his reflecting Bessel process replaced by $\tilde u_t$. A weak solution is obtained as $u_t = \tilde u_{T_t}$, where $T_t = A_t^{-1}$ and $A_t = t + \mu^{-1}\ell_t^0(\tilde u)$. Conversely, any weak solution recovers $\tilde u$ via the inverse time change $\tilde u_t = u_{A_t}$, with $A_t = \inf\{s : T_s > t\}$ and $T_t = \int_0^t \mathbf{1}_{\{u_s>0\}}\,ds$; since $\ell^0(\tilde u)$ is locally finite, the time change is defined for all $t \ge 0$, and strong uniqueness of $\tilde u$ (\cref{thm-wp}) gives weak uniqueness of $u$. The mean-reversion drift $-\lambda \tilde u_t$ enters only through the drift of $\tilde u$ and passes through the time change unchanged. Regularity of $u_t$ in the sense of \cref{def-regular-diffusion} follows from the general theory of time-changed strong Markov processes \citep{Volkonskii1958}: $\tilde u$ is regular by \cref{thm-wp}, and time change by a continuous strictly-increasing additive functional preserves regularity. The boundary at $0$ being regular, the classification of regular one-dimensional diffusions \citep[\S4]{ItoMcKean1965} makes $P_t$ a strongly continuous contraction semigroup on $C_0([0,\infty))$, with $x\mapsto P_t\varphi(x)$ continuous for every bounded continuous $\varphi$.
\end{proof}

\subsection{Invariant measure and
ergodicity}\label{invariant-measure-and-ergodicity}

The next theorem gives ergodicity of the sticky CIR process, and presents the invariant measure as the sum of an atom at the boundary and a continuous part on the interior.  





\begin{theorem}[ergodicity of the sticky CIR
process]\label{thm-ergodicity}
Let \(u_t\) be the sticky CIR process defined by \cref{eq-CIR4}
with parameters \(\lambda, \beta, \mu > 0\) and \(\delta \in (1,2)\). 
\begin{enumerate}
\def\labelenumi{\arabic{enumi}.}
\item There exists a unique invariant
  probability measure \(\pi_0\) on \([0,\infty)\) given by \[
  \pi_0(dx) = \frac{1}{Z} \left[ \frac{1}{\mu} \delta_0(dx) + \beta x^{\delta-1} e^{-\lambda\beta x^2/2} \, dx \right]
  \] where $Z = 1/{\mu} + \int_0^\infty \beta x^{\delta-1}
e^{-\lambda\beta x^2/2}\,dx<\infty$.
\item For any initial point \(x \in [0,\infty)\) and
  any bounded measurable function \(\phi\colon [0,\infty)\to \mathbb{R}\), \[
  \lim_{t \to \infty} P_t\phi(x) = \int_{[0,\infty)} \phi(y) \, \pi_0(dy).
  \]
\end{enumerate}
\end{theorem}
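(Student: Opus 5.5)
The plan is to identify the scale function and speed measure of the sticky process, read off $\pi_0$ as the normalised speed measure, and then obtain convergence from positive recurrence of a regular diffusion.

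\textbf{Scale and speed.} By \cref{thm-sticky-wp}, $u_t=\tilde u_{T_t}$ is a time change of the reflecting CIR process $\tilde u$ by the inverse of $A_t=t+\mu^{-1}\ell^0_t(\tilde u)$. The path is unchanged under a time change, so hitting probabilities are unchanged and $u$ has the same scale density $s'$ as in \eqref{eq:scale-speed-cir}. The speed measure transforms by the occupation-time formula \cref{otf}. Writing $\tilde m$ for the speed measure of $\tilde u$, which has no atom by \cref{thm-wp}, we have $A_t=\int \ell^a_t(\tilde u)\,\bigl(\tilde m(da)+\mu^{-1}\delta_0(da)\bigr)$. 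Standard time-change theory \citep[\S5]{ItoMcKean1965} then says $u$ has speed measure $m=\tilde m+\mu^{-1}\delta_0$, with $m(\{0\})=1/\mu$. This matches the sojourn condition in \cref{eq-CIR4} and \cref{diffusion-local-time}.

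\textbf{Invariance and finiteness of $m$.} Finiteness follows from $\int_0^\infty \beta x^{\delta-1}e^{-\lambda\beta x^2/2}\,dx=\tfrac{\beta}{2}(2/(\lambda\beta))^{\delta/2}\Gamma(\delta/2)<\infty$, so $Z<\infty$. I will prove invariance through the generator. For $f$ in the domain of $\mathcal L=\frac{d}{dm}\frac{d}{ds}$, the atom forces the Wentzell condition $\mathcal Lf(0)=\mu\,(df/ds)(0+)$. Integration then gives
\[
\int \mathcal Lf\,dm=\frac{df}{ds}(\infty)-\frac{df}{ds}(0+)+\frac{1}{\mu}\mu\frac{df}{ds}(0+)=\frac{df}{ds}(\infty).
\]
To see that the remaining term vanishes, I would work on a core of $f$ that are constant near infinity. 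Alternatively I can quote directly that a positively recurrent regular diffusion has the normalised speed measure as invariant law \citep[Ch.~VII]{RevuzYor1999}. Hence $\pi_0(\mathcal Lf)=0$ on a core, and with the Feller property of \cref{thm-sticky-wp} this gives invariance.

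\textbf{Uniqueness and convergence.} We have $s(\infty)=\infty$ because $s'$ grows like $e^{\lambda\beta x^2/2}$, so infinity is not reached and the process is recurrent. Since $m$ is finite, it is positively recurrent. Regularity gives irreducibility, so any invariant measure is a multiple of $m$, and uniqueness follows. For part 2, I would either cite the ergodic theorem for positively recurrent one-dimensional regular diffusions, which gives total variation convergence of $P_t(x,\cdot)$ to $\pi_0$ from every starting point \citep[Ch.~15]{karlintaylor1981}, or argue by coupling. In the coupling argument, two independent copies started at $x$ and $\pi_0$ meet almost surely, since both are continuous, recurrent and hit $0$. Pasting the paths after the meeting time then gives $|P_t\phi(x)-\pi_0(\phi)|\le 2\|\phi\|_\infty\,\mathbb P(\tau>t)\to0$. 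The key fact here is that the product chain hits the diagonal. That holds because both coordinates return to $0$ infinitely often, and the process at $0$ waits there for a positive time, so they coincide at $0$ with positive probability on each cycle.

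\textbf{Main obstacle.} The delicate step is rigorously identifying the speed measure of the time-changed process, including its atom $1/\mu$ at $0$, and checking that the boundary term at infinity vanishes. I would handle the former by citing Itô--McKean's result on time change by $\int\ell^a\,\nu(da)$. I would handle the latter by noting that infinity is a natural boundary, so $df/ds(\infty)=0$ for domain functions.
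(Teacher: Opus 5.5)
Your proposal is correct and follows the same skeleton as the paper: the speed measure with atom $1/\mu$ is normalised to give $\pi_0$, and uniqueness and convergence come from recurrence and irreducibility. You justify the key steps differently, though.

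The paper reads the atom directly off the sojourn condition. It cites \citet{RevuzYor1999} for the fact that the speed measure of a regular diffusion is invariant. It then invokes \citet[Theorem~10.0.1]{MeynTweedie1993} for uniqueness and convergence in one stroke.

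You do three things instead. First, you obtain $m=\tilde m+\mu^{-1}\delta_0$ from the It\^o--McKean time-change rule applied to $A_t=\int\ell^a_t(\tilde u)\,(\tilde m+\mu^{-1}\delta_0)(da)$, which ties the atom to the construction in \cref{thm-sticky-wp}. Second, you verify invariance by hand. The Wentzell condition $\mathcal Lf(0)=\mu\,(df/ds)(0^+)$ makes the boundary flux cancel exactly against the atom, which is the same flux balance as in \cref{thm-pdf0}. The term at infinity vanishes because $df/ds(\xi)=df/ds(x)+\int_x^\xi\mathcal Lf\,dm$ has a limit, as $m$ is finite. A nonzero limit would force $f\sim c\,s\to\pm\infty$, contradicting boundedness, since $s(\infty)=\infty$. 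Third, you prove convergence by coupling, which gives total-variation convergence from every starting point without general Harris-chain ergodic theory.

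The cited Theorem~10.0.1 of \citet{MeynTweedie1993} addresses existence and uniqueness of the invariant measure for recurrent discrete-time chains. Your coupling therefore supplies the convergence half more transparently. You also make explicit the recurrence input $s(\infty)=\infty$, which the paper leaves implicit.

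One correction to the coupling step. The sticky process does not ``wait'' at $0$ for a positive interval: its zero set has positive Lebesgue measure but empty interior. So your heuristic that the copies ``coincide at $0$ on each cycle'' would need a Fubini argument, namely that the expected Lebesgue measure of common zero times is positive, to be made precise. That heuristic is also unnecessary. Since the paths are continuous and real-valued, $X_t-Y_t$ is $\le 0$ when $X$ first hits $0$ and $\ge 0$ when $Y$ first hits $0$. By the intermediate value theorem the copies therefore meet before $\max(\tau^X_0,\tau^Y_0)$, which is finite almost surely by recurrence.
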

\begin{proof}
  The sticky SDE \eqref{eq-CIR4} shares the interior dynamics of the reflecting CIR process, so the interior scale and speed densities are given by \cref{eq:scale-speed-cir}.
  The sticky boundary condition adds a point mass at $0$ with mass
  $m(\{0\}) = 1/\mu$ as discussed in \cref{diffusion-local-time},
  giving the full speed measure $m(dx) = m'(x)\,dx + \mu^{-1}\delta_0(dx)$.
   Since $m$ is the speed measure of a regular diffusion (\cref{thm-sticky-wp}), it is an invariant
  $\sigma$-finite measure \citep[Chapter~V]{RevuzYor1999}.  As $\delta>1$,
  \[
    m([0,\infty)) = \frac{1}{\mu}
    + \int_0^\infty \beta x^{\delta-1} e^{-\lambda\beta x^2/2}\,dx < \infty,
  \]
  so $\pi_0 = m/m([0,\infty))$ is a well-defined invariant probability measure.

  The process is conservative, irreducible with respect to $m$, and admits a
  finite invariant measure. Hence, \citet[Theorem~10.0.1]{MeynTweedie1993} applies, and we conclude that $\pi_0$ is the unique invariant measure and $P_t\phi(x)\to\pi_0(\phi)$
  for all bounded measurable $\phi$.
  \end{proof}


\subsection{Transition density and boundary
condition}\label{transition-density-and-boundary-condition}

We identify the Fokker--Planck equation for the transition density, including the boundary condition at $u=0$.

\begin{theorem}[transition density]\label{thm-pdf0}
  On $(0,\infty)$, the Lebesgue density $f$ of the CIR process defined by \cref{eq-CIR4} solves
  \[
  \partial_t f = -\partial_u\!\left(\left(\tfrac{\delta-1}{\beta u}
    -\lambda u\right)f\right) + \tfrac{1}{\beta}\partial_{uu}f.
  \]
  If $p$ denotes the density with respect to the speed measure $m$,
  then at $u=0$ the sticky (Wentzell) boundary condition holds
  \begin{equation}
    \label{eq-wentzell}
    \partial_s p(t,0^+) = \tfrac{1}{\mu}\,\partial_t p(t,0),
  \end{equation}
  where $\partial_s = (1/s')\partial_u$ is the scale derivative.
  \end{theorem}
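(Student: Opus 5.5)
The plan is to work with the Feller semigroup from \cref{thm-sticky-wp} and its generator in scale--speed form, \cref{eq:generator-scale-speed}. Then I pass to the Kolmogorov forward equation by duality against test functions. The transition law has the form $P_t(x,dy)=p(t,x,y)\,m(dy)$, where $m(dy)=m'(y)\,dy+\mu^{-1}\delta_0(dy)$ is the full speed measure identified in the proof of \cref{thm-ergodicity}. On $(0,\infty)$ the Lebesgue density is therefore $f(t,u)=p(t,x,u)\,m'(u)$, and the atom at $0$ carries mass $\mu^{-1}p(t,x,0)$. The existence of a density with respect to $m$ that is jointly continuous in $(t,y)$ and smooth in the interior for $t>0$, with $p(t,x,\cdot)$ in the domain of the generator (self-adjoint in $L^2(m)$), is classical for regular diffusions \citep[\S4.11]{ItoMcKean1965}; I would cite it rather than reprove it.

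The first step is to characterise the domain of $\mathcal L$. Functions $\varphi\in C_0([0,\infty))$ that are $C^2$ on $(0,\infty)$ act by $\mathcal L\varphi=\frac1\beta\varphi''+b\varphi'$ in the interior. At the atom, $\frac{d}{dm}\frac{d\varphi}{ds}$ evaluated at $\{0\}$ gives $\mathcal L\varphi(0)=\mu\,\partial_s\varphi(0^+)$, because $\frac{d\varphi}{ds}$ vanishes to the left of $0$. This Wentzell condition on the domain is just the sojourn condition. The second step is to differentiate $\int\varphi(y)p(t,x,y)\,m(dy)$ in $t$, giving $\int \mathcal L\varphi\,p\,dm$.

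Using symmetry of $\mathcal L$ in $L^2(m)$, I then integrate by parts on $(0,\infty)$. Writing the interior part as $\int \frac{d}{dy}(\frac{1}{s'}\varphi')\,p\,dy$, with $m'\,dy=dm$, two integrations by parts give
\[
\int_0^\infty \varphi\,\partial_y\Bigl(\tfrac1{s'}\partial_y p\Bigr)dy-\Bigl[\tfrac{\varphi'}{s'}p\Bigr]_{0^+}+\Bigl[\varphi\,\tfrac{p'}{s'}\Bigr]_{0^+}.
\]
The atom contributes $\mu^{-1}\mathcal L\varphi(0)\,p(t,0)=\partial_s\varphi(0^+)\,p(t,0)$. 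This cancels the first boundary term, since $\varphi'/s'=\partial_s\varphi$ and $p$ is continuous at $0$. Comparing with $\partial_t\bigl[\int_0^\infty\varphi p\,m'\,dy+\mu^{-1}\varphi(0)p(t,0)\bigr]$ and taking first $\varphi$ compactly supported in $(0,\infty)$ yields the interior equation $m'\partial_tp=\partial_y(\frac1{s'}\partial_yp)$, which is $\mathcal L^*$ as in \cref{eq-adjoint-generator}. Substituting $p=f/m'$ and $1/(s'm')=1/\beta$ reduces this to the stated Fokker--Planck equation after expansion: $\partial_u(\frac1{s'}\partial_u(f/m'))=\frac1\beta\partial_{uu}f-\partial_u(bf)$, using $(1/m')'/(s'm')^{-1}$-type identities from \cref{eq:scale-speed-cir}. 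Letting $\varphi(0)\neq0$ then leaves $\varphi(0)\bigl[\mu^{-1}\partial_tp(t,0)-\partial_sp(t,0^+)\bigr]=0$ (with sign from the lower limit), which is \eqref{eq-wentzell}.

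The main obstacle is justifying the boundary terms at $0^+$: the limits of $p$ and of $\partial_sp$ must exist and be finite even though $s'(y)\sim y^{1-\delta}$ blows up and $m'\to0$. I would handle this by noting that $\partial_s p$ is continuous up to $0$ for functions in the domain of a generator with regular boundary. Indeed, $\partial_sp(y)-\partial_sp(0^+)=\int_{(0,y]}\mathcal L p\,dm$ and $m$ is finite near $0$ since $\delta>1$, so the limits exist by this identity. Also $\varphi'/s'=\partial_s\varphi$ is bounded for test functions in the domain. I would also need to note that test functions of this type form a core so that the identity determines $p$.
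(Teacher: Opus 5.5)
Your proposal is correct. It rests on the same balance as the paper's proof: the atom mass $\mu^{-1}p(t,0)$ changes at the rate of the boundary flux $\partial_s p(t,0^+)$. You reach that balance by a different route, though.

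The paper takes the interior equation $\partial_t f=\mathcal L^* f$ as known and writes $\mathcal L^* f=\partial_u J$ with $J=\partial_s p$. It then uses only conservation of probability, $\mathbb P(u_t=0)+\int_0^\infty f\,du=1$, together with decay of the flux at infinity and $\mathbb P(u_t=0)=p(t,0)/\mu$. In your language, this is your weak identity with the single test function $\varphi\equiv 1$, for which $\mathcal L\varphi=0$.

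Testing against the whole Feller domain buys you two things:
\begin{itemize}
\item Compactly supported $\varphi$ in $(0,\infty)$ \emph{derive} the interior Fokker--Planck equation rather than assume it. With compactly supported test functions you also need no decay of the flux at infinity.
\item The cancellation of the $\partial_s\varphi(0^+)\,p(t,0)$ terms shows the boundary condition on $p$ as the adjoint of the sojourn condition $\mathcal L\varphi(0)=\mu\,\partial_s\varphi(0^+)$ on $\mathrm{dom}(\mathcal L)$. This is the conceptual reason forward and backward densities satisfy the same Wentzell condition.
\end{itemize}

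The cost is heavier regularity input, which you outsource to It\^o--McKean:
\begin{itemize}
\item You need continuity of $p(t,x,\cdot)$ up to the atom, and this is used precisely in that cancellation. The paper avoids it by effectively defining $p(t,0)$ as $\mu\,\mathbb P(u_t=0)$.
\item You need $p(t,x,\cdot)\in\mathrm{dom}(\mathcal L)$. Once that is granted, symmetry in $L^2(m)$ gives $\partial_t p=\mathcal L_y p$ $m$-a.e., hence also at the atom. Evaluating at $y=0$ gives the Wentzell condition at once, so your integrations by parts are an explicit unfolding of that one line.
\end{itemize}

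Two small repairs:
\begin{itemize}
\item The interior reduction needs only $s'm'=\beta$ and $(\log m')'=\beta b$. These give $\frac{1}{s'}\partial_u(f/m')=\frac{1}{\beta}\partial_u f-bf$, and this should replace the phrase ``$(1/m')'/(s'm')^{-1}$-type identities''.
\item Argue existence of $\partial_s p(t,0^+)$ from $\partial_s p(y)-\partial_s p(y_0)=\int_{(y_0,y]}\mathcal L p\,dm$, letting $y_0\downarrow 0$ and using $m$-integrability of $\mathcal L p=\partial_t p$ near $0$. Finiteness of $m$ near $0$ holds for every $\delta>0$, so $\delta>1$ is not the relevant point.
\end{itemize}
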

  
  \begin{proof}
  The Lebesgue-density $f(t,u)$ of the process on $(0,\infty)$ satisfies
  the Fokker--Planck equation $\partial_t f = \mathcal{L}^* f$, where $\mathcal L^*$ is the
  $L^2(du)$-adjoint of the generator. Substituting the coefficients
  $b(u) = (\delta-1)/(\beta u) - \lambda u$ and $\sigma^2 = 2/\beta$ into $\mathcal{L}^* f = -\partial_u(bf) + \frac{1}{2}\partial_{uu}(\sigma^2 f)$
  gives the stated equation.
  
  Let $p_0(t) = \mathbb{P}(u_t = 0)$ denote the mass at the sticky
  point. Conservation of probability gives
  \[
    p_0(t) + \int_0^\infty f(t,u)\,du = 1.
  \]
  Differentiating in $t$ and substituting $\partial_t f = \mathcal{L}^*f$:
  \[
    \partial_t p_0(t)
    = -\int_0^\infty \mathcal{L}^* f\,du
    = \lim_{u\to 0^+} J(t,u),
  \]
  where $J(t,u)$ is the probability flux, and we used that the flux decays at infinity.  
  From~\cref{eq-adjoint-generator}, the Lebesgue-adjoint generator is
  $\mathcal{L}^* f = \frac{d}{du}\!\bigl(\frac{1}{s'}\frac{d}{du}\frac{f}{m'}\bigr)$,
  so the flux
  \begin{equation}\label{eq-flux}
    J(t,u) = \frac{1}{s'(u)}\,\frac{d}{du}\frac{f(t,u)}{m'(u)}
             = \frac{1}{s'(u)}\,\partial_u p(t,u)
             = \partial_s p(t,u),
  \end{equation}
  where $p = f/m'$ is the speed measure density and we used $\partial_s = (1/s')\partial_u$.
    The sticky boundary contributes a point mass $m(\{0\}) = 1/\mu$, so writing the boundary mass in terms of the speed measure density gives
  $p_0(t) = \mathbb{P}(u_t=0) = {p(t,0)}/{\mu}$.
  Substituting into the flux-balance equation $\partial_t p_0(t) =
  J(t,0^+)$, we find
  \(
    \tfrac{1}{\mu}\,\partial_t p(t,0) = \partial_s p(t,0^+),
  \)
  which rearranges to the Wentzell condition~\cref{eq-wentzell}.
  \end{proof}

\section{Sampling the invariant measure}\label{sec-sampling}
Our sampling algorithms are based on an explicit form for the transition distribution for exponentially distributed times. This requires use of the resolvent.
\begin{definition}[resolvent]\label[definition]{def:resolvent}
Let $X$ be a regular diffusion with generator $\mathcal L$ and speed measure $m$.  For $\alpha>0$, the \emph{resolvent} is the operator $R_\alpha\coloneqq\alpha(\alpha-\mathcal L)^{-1}$, which averages a function over an independent $\operatorname{Exp}(\alpha)$ time:
\begin{equation}\label{eq:resolvent-op}
  R_\alpha f(x)\;=\;\mathbb E\bigl[f(X_{T_\alpha})\mid X_0=x\bigr]\;=\;\alpha\!\int_0^\infty G_\alpha(x,y)\,f(y)\,m(dy),
\end{equation}
with \emph{resolvent kernel} $G_\alpha$.
\end{definition}

We apply this to the reflecting CIR $\tilde u$ of \cref{thm-wp}.  Analytically, its kernel $G_\alpha$ is  the \emph{Green's function} of the resolvent equation: as a function of $x$ for fixed $y$, $f(x) = G_\alpha(x,y)$ satisfies
\begin{equation}\label{eq:resolvent-ode}
  (\alpha - \mathcal L)\,f = \delta_y,
\end{equation}
where $\mathcal L$ is the generator of $\tilde u$. This is interpreted in the speed-measure sense, so $\int \phi(x)(\alpha-\mathcal L)f(x)\,m(dx)=\phi(y)$ for any test function $\phi$.
Writing $p(t;x,y)$ for the transition density of $\tilde u$ with respect to $m$, the resolvent kernel $G_\alpha$ is the Laplace transform of the transition density,
\begin{equation}\label{eq:green-laplace}
  G_\alpha(x,y) = \int_0^\infty e^{-\alpha t}\,p(t;x,y)\,dt.
\end{equation}
Alternatively, if $T_\alpha\sim\operatorname{Exp}(\alpha)$ is independent of $\tilde u$, then
\begin{equation}\label{eq:green-resolvent}
  G_\alpha(x,y)\,m(dy) = \frac{1}{\alpha}\,\mathbb{P}(\tilde u_{T_\alpha}\in dy\mid \tilde u_0=x),
\end{equation}
so $\alpha\,G_\alpha(x,y)\,m(dy)$ is the transition distribution at the
random time $T_\alpha$.

\subsection{Calculating the resolvent kernel \texorpdfstring{$G_\alpha$}{G\_alpha}}\label{green-function}

 The distributions after exponential  times are computable and can be used to construct efficient sampling algorithms for the invariant measure of the sticky CIR process, as we will see in
 \cref{sampler-gpotential,sampler-gzero}.

Away from $x=y$, \cref{eq:resolvent-ode} reduces to the
homogeneous ODE $(\alpha-\mathcal L)f=0$, which is Kummer's equation
\begin{equation}\label{eq:kummer}
  f'' + \left(\frac{\delta-1}{x} - \lambda\beta x\right)f' - \alpha\beta\,f = 0.
\end{equation}
Setting $z = \lambda\beta x^2/2$, a direct calculation transforms \cref{eq:kummer} into $  z\,w'' + (b - z)\,w' - a\,w = 0$, for
$a=\alpha/(2\lambda)$, $b=\delta/2$. This is Kummer's confluent hypergeometric equation, whose linearly independent solutions are the confluent hypergeometric functions of the first kind, $M(a,b,z) = {}_1F_1(a;b;z)$ (regular at $z=0$, with $M(a,b,0)=1$), and of the second kind, $U(a,b,z)$ (decaying at $z=\infty$, with $U(a,b,z)\sim z^{-a}$); we use the notation of \citet[Chapter~13]{Olver2010}. Therefore, \cref{eq:kummer}  has solutions $M(a,b,z_x)$ and $U(a,b,z_x)$, where $z_x=\lambda\beta x^2/2$.

The Green's function is constructed by matching at $x=y$: the decaying solution
$U$ is used for the larger argument and the regular solution $M$ for the
smaller, 
\begin{equation}\label{eq:green-formula}
  G_\alpha(x,y) = \frac{f_0(x\wedge y)\,U(a,b,z_{x\vee y})}{|\mathcal{W}|},
\end{equation}
where $f_0(x) = M(a,b,z_x) + c\,U(a,b,z_x)$ with $c$ determined by the boundary condition and $|\mathcal{W}|$ is the Wronskian normalisation that ensures the jump condition at $x=y$ is satisfied. 
See \citep{RogersWilliams2000,BorodinSalminen2002} for the general theory of Green's functions for one-dimensional diffusions.

The next lemma collects three boundary quantities that recur throughout:  the scale derivatives of $f_0$ and $f_\infty$ at $0^+$, used in \cref{lem:sticky-resolvent} to impose the Wentzell sticky condition; the Wronskian normalisation $|\mathcal{W}|$ from \cref{eq:green-formula}; and the boundary value $G_\alpha(0,0) = U_0/|\mathcal{W}|$, which determines the leave probability $\Phi = 1/G_\alpha(0,0)$ of \cref{cor:leave-zero}. Each is given in closed form via the gamma function.
\begin{lemma}[boundary quantities of the reflecting resolvent]\label[lemma]{lemma-green-zero}
  \begin{enumerate}[label=\textup{(\roman*)}]
    \item  The scale derivatives of $f_0(x) = M(a,b,z_x)$ and
  $f_\infty(x) = U(a,b,z_x)$ at the boundary satisfy
  \[
    \frac{df_0}{ds}(0^+) = 0,
    \qquad
    \frac{df_\infty}{ds}(0^+) = -|\mathcal{W}|.
  \]
  \item
  \(
    G_\alpha(0,0) = {U_0}/{|\mathcal{W}|},
    \)
    where $U_0 = U(a,b,0) = \frac{\Gamma(1-b)}{\Gamma(1+a-b)}$ and
    \begin{equation}\label{wronskian}
      |\mathcal{W}| = \lambda\beta\,\frac{\Gamma(b)}{\Gamma(a)}\,
      \Bigl(\tfrac{\lambda\beta}{2}\Bigr)^{-b}.
    \end{equation}
  \end{enumerate}

\end{lemma}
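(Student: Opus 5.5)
We compute directly from the small-$z$ expansions of Kummer's functions, using $z_x=\lambda\beta x^2/2$, so $dz_x/dx=\lambda\beta x$. The scale derivative is $d/ds=(1/s'(x))\,d/dx$ with $s'(x)=x^{1-\delta}e^{\lambda\beta x^2/2}$ from \cref{eq:scale-speed-cir}. By the chain rule,
\[
\frac{df}{ds}(x)=x^{\delta-1}e^{-\lambda\beta x^2/2}\,\lambda\beta x\,w'(z_x)=\lambda\beta\,x^{\delta}e^{-z_x}\,w'(z_x),
\]
where $w=M(a,b,\cdot)$ or $w=U(a,b,\cdot)$. Since $x^\delta=(2z_x/(\lambda\beta))^{b}$ with $b=\delta/2$, this becomes
\[
\frac{df}{ds}=\lambda\beta\,\Bigl(\tfrac{\lambda\beta}{2}\Bigr)^{-b} z^{b}e^{-z}w'(z).
\]
So everything reduces to the behaviour of $z^b w'(z)$ as $z\to0^+$.

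\textbf{The regular solution $f_0$.} We have $M'(a,b,z)=(a/b)M(a+1,b+1,z)$, which is bounded near $0$. Hence $z^bM'\to0$ because $b>0$, giving $df_0/ds(0^+)=0$.

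\textbf{The decaying solution $f_\infty$.} Here $b=\delta/2\in(1/2,1)$, so $1-b\in(0,1/2)$. I will use the connection formula (Olver 13.2.42)
\[
U(a,b,z)=\frac{\Gamma(1-b)}{\Gamma(a-b+1)}M(a,b,z)+\frac{\Gamma(b-1)}{\Gamma(a)}z^{1-b}M(a-b+1,2-b,z).
\]
Two consequences follow.
\begin{itemize}
\item Since $1-b>0$, $U(a,b,0)=\Gamma(1-b)/\Gamma(1+a-b)=U_0$.
\item Differentiating, the only term surviving in $z^bU'(z)$ as $z\to0$ is
\[
z^b\cdot\frac{\Gamma(b-1)}{\Gamma(a)}(1-b)z^{-b}=-\frac{\Gamma(b)}{\Gamma(a)},
\]
using $(1-b)\Gamma(b-1)=-\Gamma(b)$. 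The remaining terms are $O(z^b)$ or $O(z)$, which vanish.
\end{itemize}
Therefore
\[
\frac{df_\infty}{ds}(0^+)=-\lambda\beta\frac{\Gamma(b)}{\Gamma(a)}\Bigl(\tfrac{\lambda\beta}{2}\Bigr)^{-b}.
\]

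\textbf{Identifying $|\mathcal{W}|$.} It remains to show that this quantity is the Wronskian normalisation in \cref{eq:green-formula}. The jump condition for $(\alpha-\mathcal L)G=\delta_y$ in the speed-measure sense (using \cref{eq:generator-scale-speed}) reads
\[
\frac{dG}{ds}(y^-)-\frac{dG}{ds}(y^+)=1.
\]
This forces
\[
|\mathcal W|=f_0'(y)f_\infty(y)\cdot\tfrac{1}{s'(y)}-f_0(y)f_\infty'(y)\cdot\tfrac{1}{s'(y)},
\]
the scale Wronskian. Abel's identity makes this independent of $y$, since $\mathcal L$ is in divergence form with respect to $s$ and $m$. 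Evaluating it at $y\to0^+$, where $f_0\to1$, $df_0/ds\to0$ and $df_\infty/ds\to(\text{value above})$, gives $|\mathcal W|=-df_\infty/ds(0^+)$. This is exactly \cref{wronskian} together with the claimed identity in (i). As a cross-check, one can evaluate it instead via Olver's $\mathscr W\{M,U\}=-\Gamma(b)z^{-b}e^z/\Gamma(a)$.

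\textbf{Part (ii).} For the reflecting process, $c=0$ by (i): $f_0=M$ already satisfies the reflecting condition $df/ds(0^+)=0$. Then \cref{eq:green-formula} at $x=y=0$ gives $G_\alpha(0,0)=M(a,b,0)\,U_0/|\mathcal W|=U_0/|\mathcal W|$.

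The main obstacle is sign and constant bookkeeping. This includes the $\Gamma(b-1)(1-b)=-\Gamma(b)$ step and checking that $b<1$ makes $U(a,b,0)$ finite. It also requires ensuring the Wronskian convention, namely the scale Wronskian taken with a positive sign for the jump, matches the normalisation in \cref{eq:green-formula}.
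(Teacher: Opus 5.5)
Your proof is correct, but it runs the paper's argument in reverse. The paper starts from the tabulated Kummer Wronskian $W_z[M,U]=-\frac{\Gamma(b)}{\Gamma(a)}z^{-b}e^{z}$ (Olver 13.2.34). It converts this to the $x$-variable and divides by $s'$, so the scale Wronskian is seen to be the explicit constant $-|\mathcal W|$, with Abel's theorem confirming constancy. Only then does it let $x\to0^+$, using $f_0(0)=1$ and $(df_0/ds)(0^+)=0$, to read off $(df_\infty/ds)(0^+)=-|\mathcal W|$.

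You instead compute $(df_\infty/ds)(0^+)$ directly from the connection formula (Olver 13.2.42). This isolates the $z^{1-b}\propto x^{2-\delta}$ cusp of $U$ as the sole source of the nonzero flux, via $(1-b)\Gamma(b-1)=-\Gamma(b)$. You then identify $|\mathcal W|$ by showing that the jump condition forces the normalisation in \eqref{eq:green-formula} to be the (constant) scale Wronskian, and you evaluate it at $0^+$.

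The paper's route is shorter and gives the Wronskian at every $x$ from a single identity. Your route avoids the global Wronskian formula and makes explicit why the normalising constant is the scale Wronskian with the correct sign, which the paper essentially takes as a definition. It also spells out why $c=0$ in the reflecting case, which the paper leaves implicit. Finally, the connection-formula asymptotics you use are exactly what the paper later relies on in \cref{lem:resolvent-regularity}.
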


\begin{proof}
(i) The scale density is $s'(x) = x^{1-\delta}e^{\lambda\beta x^2/2}$.
  Since $M(a,b,z) = 1 + \frac{a}{b}z + O(z^2)$, we have
  $f_0'(x) \sim \frac{a}{b}\lambda\beta x$ as $x\downarrow 0$, so
  $f_0'(x)/s'(x) \sim \frac{a\lambda\beta}{b}x^\delta \to 0$,
  confirming $(df_0/ds)(0^+) = 0$.

  The Wronskian satisfies the identity $W_z[M,U] \coloneqq MU' - M'U = -\frac{\Gamma(b)}{\Gamma(a)}z^{-b}e^z$
  \cite[\S13.2.34]{Olver2010}. Since $dz_x/dx = \lambda\beta x$, this gives
  \[
    f_0(x) f_\infty'(x) - f_\infty(x) f_0'(x)
    = W_z[M,U]\big|_{z_x} \,  \lambda\beta x
    = -\frac{\Gamma(b)}{\Gamma(a)}
      \Bigl(\frac{\lambda\beta x^2}{2}\Bigr)^{\!-b}
      e^{\lambda\beta x^2/2}\,\lambda\beta x.
  \]
  Dividing by $s'(x)$, the exponentials cancel and the net power of $x$
  vanishes (since $x^{-2b}\,x\,x^{\delta-1} = 1$), giving the constant
  \[
    \frac{f_0 f_\infty' - f_\infty f_0'}{s'(x)}
    = -\frac{\Gamma(b)}{\Gamma(a)}\,\lambda\beta
      \left(\frac{\lambda\beta}{2}\right)^{-b}
    \eqqcolon -|\mathcal{W}|.
  \]
  By Abel's theorem applied to the ODE~\eqref{eq:kummer} with coefficient
  $p(x) = (\delta-1)/x - \lambda\beta x$, the Wronskian satisfies
  $W[f_0,f_\infty](x) = C\exp(-\int p\,dx)$.  Since
  $\exp(-\int p\,dx) = x^{1-\delta}e^{\lambda\beta x^2/2} = s'(x)$
  by~\cref{eq:scale-speed-cir}, the ratio $W[f_0,f_\infty](x)/s'(x)$ is
  constant in $x$, and the calculation above identifies this constant
  as $-|\mathcal{W}|$.
  Taking $x\to 0^+$ and using $f_0(0)=1$, $(df_0/ds)(0^+)=0$, the Wronskian identity gives
  \[
    \frac{df_\infty}{ds}(0^+) = -|\mathcal{W}|.
  \]

  (ii) Note that $M(a,b,0)=1$  and $U(a,b,0)=\Gamma(1-b)/\Gamma(1+a-b)\eqqcolon U_0$ from  \cite[(13.2.2) and (13.2.18)]{Olver2010} (under the condition $b\in(0,1)$, equivalently $\delta\in(0,2)$). From  $G_\alpha(x,y) = f_0(x\wedge y)\,f_\infty(x\vee y)/|\mathcal{W}|$, we have  that
  $G_\alpha(0,0) = U_0/|\mathcal{W}|$. \qedhere
\end{proof}

\subsection{Transition distributions for the sticky CIR process}
\label{transition-sticky-cir}

We derive the transition distribution $\mathbb{P}(u_{T_\alpha}\in\cdot\mid u_0=x)$
for $x\geq 0$, where $T_\alpha\sim\mathrm{Exp}(\alpha)$ is
independent of $u$. The key step is to
impose the Wentzell sticky boundary condition on the resolvent.


\begin{lemma}[resolvent boundary condition]\label[lemma]{lem:resolvent-bc}
The resolvent kernel $G^{\rm sticky}_\alpha(x,y)$, viewed as a function
of its first argument at fixed $y \in (0,\infty)$, solves
$(\alpha-\mathcal{L})G^{\rm sticky}_\alpha(\cdot,y)=\delta_y$ on $(0,\infty)$
in the speed-measure sense, with boundary condition
\begin{equation}\label{eq:wentzell}
  \frac{dG^{\rm sticky}_\alpha(\cdot,y)}{ds}(0^+)
  = \frac{\alpha}{\mu}\,G^{\rm sticky}_\alpha(0,y).
\end{equation}
By the symmetry $G^{\rm sticky}_\alpha(x,y) = G^{\rm sticky}_\alpha(y,x)$, the same condition holds in the second argument.
\end{lemma}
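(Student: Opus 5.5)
The natural route is to characterise $R^{\rm sticky}_\alpha f(x)=\mathbb E[f(u_{T_\alpha})\mid u_0=x]$ as the solution of the resolvent equation $(\alpha-\mathcal L)g=\alpha f$ with the generator of the sticky process, and then read off the boundary condition satisfied by its kernel. The generator of the sticky process is $\mathcal L=\frac{d}{dm}\frac{d}{ds}$ with the full speed measure $m(dx)=m'(x)\,dx+\mu^{-1}\delta_0(dx)$ from the proof of \cref{thm-ergodicity}. On $(0,\infty)$, where $m$ has density $m'$, this operator coincides with the reflecting CIR generator. Hence, away from $x=y$, $G^{\rm sticky}_\alpha(\cdot,y)$ solves Kummer's equation \cref{eq:kummer}. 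At $x=y$ it has the usual jump in scale derivative, exactly as for the reflecting kernel. This gives the interior equation in the speed-measure sense.

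For the boundary condition, I will evaluate $\mathcal L g$ at the atom. By definition of $d/dm$ at a point carrying mass $m(\{0\})=1/\mu$, taking the Radon--Nikodym derivative over $[0,\varepsilon)$ and letting $\varepsilon\downarrow0$ gives
\[
  \mathcal L g(0)=\lim_{\varepsilon\downarrow0}\frac{\frac{dg}{ds}(\varepsilon)-\frac{dg}{ds}(0)}{m([0,\varepsilon))}=\mu\,\frac{dg}{ds}(0^+).
\]
Here I use the convention $\frac{dg}{ds}(0^-)=0$: there is no mass to the left, and the process is conservative. Equivalently, Itô's formula combined with the sojourn condition $\mathbf 1_{\{u=0\}}dt=\mu^{-1}d\ell^0_t$ shows that the local-time term in $dg(u_t)$ is $\frac{dg}{ds}(0^+)\,d\ell^0_t$. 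This is the same computation as in \cref{thm-pdf0}, now on the backward side.

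I then apply the resolvent equation $(\alpha-\mathcal L)g=\alpha f$ at the point $0$. For $f$ supported away from $0$, $f(0)=0$, so $\alpha g(0)=\mu\frac{dg}{ds}(0^+)$. Taking $f$ to approximate $\delta_y$ with $y>0$, and using $g=\alpha\int G^{\rm sticky}_\alpha(\cdot,y)f(y)\,m(dy)$, gives $\frac{dG}{ds}(0^+)=\frac{\alpha}{\mu}G(0,y)$ pointwise in $y$. Interchanging the $y$-integral with $d/ds$ is justified because $G$ is piecewise built from $M$ and $U$, which are smooth near $0$ once $y>0$ is fixed.

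Symmetry follows from self-adjointness of $\mathcal L$ in $L^2(m)$, which includes the atom. Equivalently, $p(t;x,y)$ with respect to $m$ is symmetric for any regular diffusion, and I will carry this through the Laplace transform \cref{eq:green-laplace}.

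The main obstacle is rigorously identifying the domain of the sticky generator, that is, justifying the value of $\mathcal L g(0)$. I would first try the Itô/Tanaka route. Apply the Itô formula to $g(u_t)$ for $g$ in the candidate domain, use the time change $u_t=\tilde u_{T_t}$ from \cref{thm-sticky-wp}, and note that the reflecting process contributes $\frac{dg}{ds}(0^+)\,d\ell^0$. The Neumann condition $\frac{dg}{ds}(0^+)=0$ does not hold here, so this term survives and becomes $\mu\frac{dg}{ds}(0^+)\mathbf 1_{\{u=0\}}dt$. Then Dynkin's formula with an independent $T_\alpha$ yields the condition. If that route is too delicate, I would fall back on citing Feller's boundary classification \citep[\S4]{ItoMcKean1965}, which directly gives the domain condition $\mathcal Lg(0)=\mu\,\frac{dg}{ds}(0^+)$.
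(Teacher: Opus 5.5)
Your proposal is correct, but it runs in the opposite direction from the paper.

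\textbf{The paper's route (forward side).} It Laplace-transforms the time-domain Wentzell condition $\partial_s p(t;x,0^+)=\tfrac1\mu\partial_t p(t;x,0)$ of \cref{thm-pdf0}. It integrates by parts in $t$, using $p(0^+;x,0)=0$ for $x>0$, and commutes $\partial_s$ with the $t$-integral. This gives the condition in the \emph{second} argument, $\frac{dG^{\rm sticky}_\alpha(x,\cdot)}{ds}(0^+)=\frac{\alpha}{\mu}G^{\rm sticky}_\alpha(x,0)$, which $m$-symmetry then moves to the first argument.

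\textbf{Your route (backward side).} You identify the sticky generator at the atom, $\mathcal Lg(0)=\mu\,\frac{dg}{ds}(0^+)$. You then evaluate $(\alpha-\mathcal L)g=\alpha f$ at $0$ for $f$ vanishing near $0$, and localise $f$ at $y$. This gives the first-argument condition directly, and symmetry only supplies the second.

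\textbf{What each buys.} The paper's route is short once \cref{thm-pdf0} is available. However, it inherits the regularity that the flux-balance argument of that theorem presupposes. It also needs $p(t;x,\cdot)$ and its scale derivative to be controlled up to the boundary in order to exchange $\partial_s$ with the Laplace integral. The paper draws that justification from \cref{lem:sticky-resolvent}, whose proof itself uses the present lemma.

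Your route never touches the time-dependent density. The It\^o/time-change verification uses only the construction $u_t=\tilde u_{T_t}$ of \cref{thm-sticky-wp}: the reflecting local-time term $\frac{dg}{ds}(0^+)\,d\ell^0$ becomes $\mu\frac{dg}{ds}(0^+)\mathbf 1_{\{u_t=0\}}dt$ under the time change. As a by-product it yields the general relation $\frac{dg}{ds}(0^+)=\frac{\alpha}{\mu}\bigl(g(0)-f(0)\bigr)$, which the paper needs later in \cref{lem:resolvent-regularity,lem:precond}.

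\textbf{The price of your route.} You must identify the domain at the atom, and two points deserve care there. First, $\frac{dg}{ds}(0^-)=0$ is not a convention; it encodes the absence of killing. Second, $g$ inherits the $x^{2-\delta}$ cusp of $U$ and so is not $C^2$ at $0$. The It\^o step should therefore be carried out in the natural-scale coordinate $s(\tilde u_t)$, or replaced by the citation of Feller's boundary classification that you propose as a fallback.
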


\begin{proof}
  By standard properties of Green's functions,  the function $x\mapsto G^{\rm sticky}_\alpha(x,y)$ solves
the  resolvent ODE $(\alpha-\mathcal L)f=\delta_y$. Only the boundary behaviour at $0$ is
modified by stickiness, and that is what we verify here.

Fix $x\in(0,\infty)$. Recall that $G_\alpha^{\rm sticky}(x,y)$ is given by the Laplace transform $\int_0^\infty e^{-\alpha t} p(t;x,y)\,dt$ by~\cref{eq:green-laplace} (applied to the sticky process, with $m$ now carrying the atom $\mu^{-1}\delta_0$).  We apply the Laplace transform 
to both sides of the time-domain Wentzell condition~\cref{eq-wentzell},
\(
  \partial_s p(t;x,0^+) = \tfrac{1}{\mu}\,\partial_t p(t;x,0).
\) 
On the right, integration by parts in $t$ gives
\begin{align*}
  \int_0^\infty e^{-\alpha t}\,\partial_t p(t;x,0)\,dt
  &= \bigl[e^{-\alpha t}p(t;x,0)\bigr]_0^\infty
  + \alpha\int_0^\infty e^{-\alpha t}\,p(t;x,0)\,dt\\
  &= \alpha\,G^{\rm sticky}_\alpha(x,0),
\end{align*}
where the boundary term at $\infty$ vanishes and the term at $0$ equals
$-p(0^+;x,0)=0$ (since $x>0$ and the initial law has no mass at $0$). On the left, commuting $\partial_s$ with the $t$-integral in the definition of $G^{\rm sticky}_\alpha$ (justified as $p$ and its scale derivative are continuous up to the boundary with at most polynomial growth in $t$; \cref{lem:sticky-resolvent}) gives
\[
  \int_0^\infty e^{-\alpha t}\,\partial_s p(t;x,0^+)\,dt
   =  \frac{d\,G^{\rm sticky}_\alpha(x,\cdot)}{ds}(0^+).
\]
Equating the two transforms gives the second-argument form of~\cref{eq:wentzell}:
\[
  \frac{d\,G^{\rm sticky}_\alpha(x,\cdot)}{ds}(0^+)
   =  \frac{\alpha}{\mu}\,G^{\rm sticky}_\alpha(x,0).
\]
Symmetry of the resolvent kernel with respect to the speed measure
 transfers this to the first argument
and gives~\cref{eq:wentzell}. 
\end{proof}
\begin{lemma}[sticky resolvent kernel]\label[lemma]{lem:sticky-resolvent}
The fundamental solutions of $(\alpha-\mathcal{L})f=0$ satisfying the
boundary condition~\eqref{eq:wentzell} are $f_\infty(x) = U(a,b,z_x)$
and
\begin{equation}\label{eq:f0sticky}
  f_0^{\rm sticky}(x) = M(a,b,z_x) + c_\mu\,U(a,b,z_x),
\end{equation}
where
\begin{equation}\label{eq:cmu}
  c_\mu = \frac{-\alpha}{\mu\,|\mathcal{W}| + \alpha U_0}.
\end{equation}
The sticky resolvent kernel is
\[
  G^{\rm sticky}_\alpha(x,y) =
  \frac{f_0^{\rm sticky}(x\wedge y)\,f_\infty(x\vee y)}{|\mathcal{W}|},
\]
with the same normalisation constant $|\mathcal{W}|$ as the reflecting
resolvent kernel.
\end{lemma}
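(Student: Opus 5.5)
The plan is to build the kernel by the same two-solution recipe as in \cref{eq:green-formula}, modifying only the left solution so that it satisfies the sticky boundary condition \cref{eq:wentzell}. Away from the boundary, the sticky process has the same interior dynamics as the reflecting CIR. Hence $x\mapsto G^{\rm sticky}_\alpha(x,y)$ solves Kummer's equation \cref{eq:kummer} on $(0,y)$ and on $(y,\infty)$, so on each side it is a combination of $M(a,b,z_x)$ and $U(a,b,z_x)$.

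\emph{The right solution.} On $(y,\infty)$ the kernel must remain bounded as $x\to\infty$, because $R_\alpha$ maps bounded functions to bounded functions and $M(a,b,z)$ grows like $e^z z^{a-b}$. This forces the right solution to be a multiple of $f_\infty=U(a,b,z_x)$, which is unchanged from the reflecting case.

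\emph{The left solution.} On $(0,y)$ I take $f_0^{\rm sticky}=M+c\,U$ and impose \cref{eq:wentzell}, $\frac{df}{ds}(0^+)=\frac{\alpha}{\mu}f(0)$. This condition is invariant under the scalar multiple that fixes the kernel's normalisation, so it can be imposed directly on $f_0^{\rm sticky}$. Using \cref{lemma-green-zero}(i), the left-hand side is
\[
\frac{dM}{ds}(0^+)+c\,\frac{dU}{ds}(0^+)=-c\,|\mathcal W|,
\]
while $f_0^{\rm sticky}(0)=1+c\,U_0$ by \cref{lemma-green-zero}(ii). The condition therefore reads $-c|\mathcal W|=\frac{\alpha}{\mu}(1+cU_0)$. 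Solving, $c(\mu|\mathcal W|+\alpha U_0)=-\alpha$, which is \cref{eq:cmu}. The denominator is positive because $|\mathcal W|>0$ and $U_0>0$ for $b\in(0,1)$, $a>0$. As a sanity check, letting $\mu\to\infty$ gives $c_\mu\to 0$, which recovers the reflecting kernel.

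\emph{Normalisation.} The jump condition at $x=y$ requires the scale-derivative jump to equal $-1$ in the speed-measure sense. Hence the normaliser is the scale Wronskian $(f_0^{\rm sticky}f_\infty'-f_\infty f_0^{\rm sticky\prime})/s'$. Adding $c_\mu U$ to $M$ changes nothing, since $W[U,U]=0$. So the constant is the same $-|\mathcal W|$ computed in \cref{lemma-green-zero}, and the kernel is $f_0^{\rm sticky}(x\wedge y)f_\infty(x\vee y)/|\mathcal W|$. Symmetry in $(x,y)$ is then immediate from the formula, consistent with \cref{lem:resolvent-bc}.

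\emph{Main obstacle.} The delicate step is justifying that the boundary data $\frac{dU}{ds}(0^+)$ and $U(0)$ may be used in the Wentzell condition, since $U$ is not smooth at $z=0$: it contains a $z^{1-b}$ term. I would handle this by writing $U=U_0M(a,b,z)+C z^{1-b}M(1+a-b,2-b,z)$ via Olver (13.2.42). The scale derivative of $x^{2-\delta}$ against $s'(x)=x^{1-\delta}e^{\lambda\beta x^2/2}$ tends to a finite nonzero limit, which is consistent with (i) of \cref{lemma-green-zero}.

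Finally, uniqueness: any other candidate kernel differs from this one by a solution of the homogeneous problem that is bounded at infinity, hence a multiple of $U$, and that satisfies the Wentzell condition. Since $\frac{dU}{ds}(0^+)=-|\mathcal W|<0<\frac{\alpha}{\mu}U_0$, such a multiple must vanish, so the kernel is unique and the identification of the resolvent of the sticky process is complete.
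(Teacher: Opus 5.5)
Your proposal is correct and follows the paper's proof: you keep $f_\infty=U(a,b,z_x)$, write $f_0^{\rm sticky}=M+c\,U$, and impose the Wentzell condition using $(dM/ds)(0^+)=0$, $(dU/ds)(0^+)=-|\mathcal W|$ and $f_0^{\rm sticky}(0)=1+cU_0$ to obtain $c_\mu$. You also observe, as the paper does, that the normalisation is unchanged because $W[U,U]=0$. Your extra remarks (positivity of $\mu|\mathcal W|+\alpha U_0$, the connection-formula check of the boundary data of $U$, and uniqueness via $-|\mathcal W|<\tfrac{\alpha}{\mu}U_0$) are correct additions that the paper leaves implicit.
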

\begin{proof}
The solution $f_\infty = U(a,b,z_x)$ decays at $\infty$ and is unchanged from the reflecting case. The general solution of 
$(\alpha-\mathcal{L})f=0$ on $(0,\infty)$ regular at $0$ is a linear
combination of $M$ and $U$, so we write $f_0^{\rm sticky}$ as
in \cref{eq:f0sticky}. From \cref{lemma-green-zero},
$(dM/ds)(0^+)=0$ and $(dU/ds)(0^+)=-|\mathcal{W}|$, so substituting
into \cref{eq:wentzell} gives
\[ 
  -c_\mu\,|\mathcal{W}| = \frac{\alpha}{\mu}(1 + c_\mu U_0),
\]
and solving yields \cref{eq:cmu}. The normalisation constant is unchanged
since adding $c_\mu f_\infty$ to $f_0$ contributes
$c_\mu(f_\infty f_\infty' - f_\infty f_\infty')=0$ to the Wronskian.
The resolvent equation $(\alpha-\mathcal L)G^{\rm sticky}_\alpha(\cdot,y)=\delta_y$
then holds on $(0,\infty)$ as in the reflecting case \cref{eq:green-formula}, and~\cref{eq:wentzell}
holds by the choice of $c_\mu$.
\end{proof}
We recast the transition distribution in a form that can be exploited in the sampling algorithms: \cref{thm:transition-dist} for initial data $x>0$ and \cref{cor:leave-zero} when starting from the boundary $x=0$.
\begin{theorem}[transition distribution]\label{thm:transition-dist}
  For $x>0$, the transition distribution $P_x(u_{T_\alpha}\in\cdot)$
  admits the resolvent representation
  \begin{equation}\label{eq:trans}
    \mathbb{P}(u_{T_\alpha}\in A\mid u_0=x)
    = \frac{\alpha}{\mu}\,G^{\rm sticky}_\alpha(x,0)\,\mathbf{1}_{0\in A}
    + \alpha\int_{A\cap(0,\infty)} G^{\rm sticky}_\alpha(x,y)\,m'(y)\,dy,
  \end{equation}
  equivalent to the three-component mixture
  \begin{equation}\label{eq:trans-mixture}
    \mathbb P(u_{T_\alpha}\in A\mid u_0=x)
    = w_0(x)\,\mathbf{1}_{0\in A}
    + w_<(x)\,\nu_<(A)
    + w_>(x)\,\nu_>(A),
  \end{equation}
  where $\nu_<$ and $\nu_>$ are the probability measures with Lebesgue
  densities
  \[
    \frac{d\nu_<(y)}{dy}  \propto  f_0^{\rm sticky}(y)\,m'(y)\,\mathbf{1}_{(0,x)}(y),
    \qquad
    \frac{d\nu_>(y)}{dy}  \propto  U(a,b,z_y)\,m'(y)\,\mathbf{1}_{(x,\infty)}(y),
  \]
  and the weights are
  \begin{align}
    w_0(x) &= -c_\mu\,U(a,b,z_x), \label{eq:w0}\\
    w_<(x) &= \frac{\alpha\,U(a,b,z_x)}{|\mathcal{W}|}
               \int_0^x f_0^{\rm sticky}(y)\,m'(y)\,dy, \label{eq:wlt}\\
    w_>(x) &= \frac{\alpha\,f_0^{\rm sticky}(x)}{|\mathcal{W}|}
               \int_x^\infty U(a,b,z_y)\,m'(y)\,dy, \label{eq:wgt}
  \end{align}
  satisfying $w_0(x)+w_<(x)+w_>(x)=1$.
\end{theorem}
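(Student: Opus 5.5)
The proof has three parts: obtain \cref{eq:trans} from the resolvent definition applied to the sticky process, read off the mixture \cref{eq:trans-mixture}, and check that the weights sum to one.

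\textbf{Step 1: the resolvent representation.} I would apply \cref{def:resolvent} to the sticky process $u$ of \cref{thm-sticky-wp}. Its speed measure is $m(dy)=\mu^{-1}\delta_0(dy)+m'(y)\,dy$, as in the proof of \cref{thm-ergodicity}, and its resolvent kernel is $G^{\rm sticky}_\alpha$ from \cref{lem:sticky-resolvent}. By \cref{eq:green-resolvent} applied to $u$,
\[
  \mathbb P(u_{T_\alpha}\in A\mid u_0=x)=\alpha\int_A G^{\rm sticky}_\alpha(x,y)\,m(dy).
\]
Splitting $m$ into its atom and its density part gives \cref{eq:trans} directly.

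\textbf{Step 2: the mixture form.} For $0<y<x$ we have $G^{\rm sticky}_\alpha(x,y)=f_0^{\rm sticky}(y)U(a,b,z_x)/|\mathcal W|$. For $y>x$ we have $G^{\rm sticky}_\alpha(x,y)=f_0^{\rm sticky}(x)U(a,b,z_y)/|\mathcal W|$. Factoring out the $x$-dependent terms produces $w_<\nu_<$ and $w_>\nu_>$ with the weights \cref{eq:wlt,eq:wgt}.

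For the atom, $G^{\rm sticky}_\alpha(x,0)=f_0^{\rm sticky}(0)U(a,b,z_x)/|\mathcal W|$, with $f_0^{\rm sticky}(0)=1+c_\mu U_0$. From \cref{eq:cmu},
\[
  1+c_\mu U_0=\frac{\mu|\mathcal W|}{\mu|\mathcal W|+\alpha U_0},
\]
so
\[
  \frac{\alpha}{\mu}\,G^{\rm sticky}_\alpha(x,0)=\frac{\alpha\,U(a,b,z_x)}{\mu|\mathcal W|+\alpha U_0}=-c_\mu U(a,b,z_x),
\]
which is \cref{eq:w0}.

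Two integrability points need checking. First, $\nu_>$ is a genuine probability measure because $U(a,b,z_y)m'(y)$ is integrable at $\infty$, thanks to the Gaussian factor in $m'$. Second, $\nu_<$ is well defined because $f_0^{\rm sticky}$ is bounded near $0$ (as $U_0<\infty$ for $b<1$) and $m'$ is integrable there. Positivity of $f_0^{\rm sticky}$ on $(0,x)$ should follow from its being the increasing solution of the resolvent equation. If that argument is awkward, I would instead use positivity of the kernel $G^{\rm sticky}_\alpha$, which holds since it is a Laplace-transformed density.

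\textbf{Step 3: the weights sum to one.} This is the main step. Conservativeness gives it abstractly, but I would verify it directly, since the algorithm relies on it. The function $g(x)=w_<(x)+w_>(x)$ is $\alpha\int_0^\infty G_\alpha^{\rm sticky}(x,y)m'(y)\,dy$, so it satisfies $(\alpha-\mathcal L)g=\alpha$ on $(0,\infty)$. The approach is to exploit that the constant $1$ also solves this equation, which makes $h=1-w_0-g$ a homogeneous solution to be shown zero.

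\begin{itemize}
\item Near $\infty$, $h$ is bounded, because every term is bounded as $x\to\infty$.
\item Since $h$ is bounded and solves $(\alpha-\mathcal L)h=0$, it can have no $M$ component, as $M$ grows at $\infty$. Hence $h$ is a multiple of $U$.
\item At $0$, $w_0$ contributes scale derivative $c_\mu|\mathcal W|$.
\item Using the Wronskian identity of \cref{lemma-green-zero}, the scale derivative of $g$ at $0^+$ equals $-\frac{\alpha}{|\mathcal W|}\bigl(-|\mathcal W|\bigr)\cdot 0+\frac{\alpha}{|\mathcal W|}\frac{df_0^{\rm sticky}}{ds}(0^+)\int_0^\infty Um'$. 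These boundary terms combine, via the Wentzell relation \cref{eq:wentzell}, to show that $h$ satisfies \cref{eq:wentzell}.
\end{itemize}

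Since $U$ itself does not satisfy \cref{eq:wentzell}, because $c_\mu\neq0$, this forces $h\equiv0$.

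An alternative route avoids the boundary computation: compute $\frac{d}{dx}g$ directly, integrate $\alpha f\,m'=(f'/s')'$ for both $f_0^{\rm sticky}$ and $U$, and simplify using the Wronskian $-|\mathcal W|s'$. This gives
\[
  g(x)=1+\frac{U(a,b,z_x)}{|\mathcal W|}\,\frac{df_0^{\rm sticky}}{ds}(0^+),
\]
and from \cref{eq:wentzell} we have $\frac{df_0^{\rm sticky}}{ds}(0^+)=-c_\mu|\mathcal W|$. This yields $g=1+c_\mu U=1-w_0$, as required. I would take this direct computation as the primary argument.
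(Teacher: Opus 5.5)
\textbf{Verdict: correct, by a different route in Step~3; one sign slip to fix.}

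Your Steps~1 and~2 match the paper's proof. Split $m=\mu^{-1}\delta_0+m'\,dy$ in \cref{eq:green-resolvent}; then $f_0^{\rm sticky}(0)=1+c_\mu U_0$ gives $w_0=-c_\mu U(a,b,z_x)$.

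\textbf{How Step~3 differs.} The paper writes $G^{\rm sticky}_\alpha=G_\alpha+c_\mu U(a,b,z_x)U(a,b,z_y)/|\mathcal W|$. It then uses the probabilistic fact that $\alpha G_\alpha(x,\cdot)m'$ is the atom-free transition density of the conservative reflecting process, once at general $x$ and once at $x=0$ to get $\int_0^\infty U(a,b,z_y)m'(y)\,dy=|\mathcal W|/\alpha$. Your primary argument integrates $\alpha f m'=(f'/s')'$ and uses the constant Wronskian, so it is purely analytic.
\begin{itemize}
\item It needs no separate input that the Kummer formula for $G_\alpha$ is correctly normalised. It proves that normalisation (the case $c_\mu=0$, where $\tfrac{dM}{ds}(0^+)=0$ forces $w_<+w_>\equiv1$) and the identity $\alpha\int_0^\infty Um'=|\mathcal W|$ as byproducts.
\item It shows that conservation of mass is equivalent to the Wentzell condition on $f_0^{\rm sticky}$: $1-g=\tfrac{U(a,b,z_x)}{|\mathcal W|}\tfrac{df_0^{\rm sticky}}{ds}(0^+)$ equals $w_0=\tfrac{\alpha}{\mu}G^{\rm sticky}_\alpha(x,0)$ exactly when $\tfrac{df_0^{\rm sticky}}{ds}(0^+)=\tfrac{\alpha}{\mu}f_0^{\rm sticky}(0)$.
\end{itemize}
The paper's route is shorter once the reflecting resolvent is accepted.

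\textbf{Sign slip.} The sign in your displayed formula for $g$ is wrong. The computation gives
\[
  g(x)=\frac{U(a,b,z_x)\,\partial_x f_0^{\rm sticky}(x)-f_0^{\rm sticky}(x)\,\partial_x U(a,b,z_x)}{|\mathcal W|\,s'(x)}-\frac{U(a,b,z_x)}{|\mathcal W|}\,\frac{df_0^{\rm sticky}}{ds}(0^+)
  =1-\frac{U(a,b,z_x)}{|\mathcal W|}\,\frac{df_0^{\rm sticky}}{ds}(0^+).
\]
Only with this sign does $\tfrac{df_0^{\rm sticky}}{ds}(0^+)=-c_\mu|\mathcal W|$ (from \cref{lemma-green-zero}, equivalently $\tfrac{\alpha}{\mu}f_0^{\rm sticky}(0)$) give $g=1+c_\mu U=1-w_0$. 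With your sign you would get $1+w_0$.

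\textbf{Tail identity.} You should state that $\alpha\int_x^\infty U(a,b,z_y)m'(y)\,dy=-\partial_x U(a,b,z_x)/s'(x)$ requires $\partial_x U(a,b,z_x)/s'(x)\to0$ as $x\to\infty$. This holds because $\partial_z U(a,b,z)=-aU(a+1,b+1,z)=O(z^{-a-1})$, while $1/s'(x)$ carries the factor $e^{-z_x}$.

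\textbf{Backup argument.} It works, but the stated reason is off. $U$ fails \cref{eq:wentzell} because $\tfrac{dU}{ds}(0^+)=-|\mathcal W|<0<\tfrac{\alpha}{\mu}U_0$, not because $c_\mu\neq0$.
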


\begin{proof}
  The representation~\cref{eq:trans} follows from~\cref{eq:green-resolvent}
  and the decomposition of the speed measure,
  $m(dy)=\mu^{-1}\delta_0(dy)+m'(y)\,dy$.

  For~\cref{eq:trans-mixture}, evaluate~\cref{eq:trans} on the atom first.
  By~\cref{lem:sticky-resolvent} with $y=0$,
  $G^{\rm sticky}_\alpha(x,0) = f_0^{\rm sticky}(0)\,U(a,b,z_x)/|\mathcal{W}|$.
  Evaluating $f_0^{\rm sticky}(0) = M(a,b,0)+c_\mu U(a,b,0) = 1+c_\mu U_0$
  and substituting $c_\mu=-\alpha/(\mu|\mathcal{W}|+\alpha U_0)$
  from~\cref{eq:cmu} gives
  \begin{equation}\label{eq:f0sticky-zero}   
    f_0^{\rm sticky}(0)
    = 1 - \frac{\alpha U_0}{\mu|\mathcal{W}|+\alpha U_0}
    = \frac{\mu|\mathcal{W}|}{\mu|\mathcal{W}|+\alpha U_0}.
  \end{equation}

  The atom weight $w_0(x) = (\alpha/\mu)\,G^{\rm sticky}_\alpha(x,0)$ then
  becomes
  \[
    w_0(x)
    = \frac{\alpha}{\mu}
      \,\frac{\mu|\mathcal{W}|}{\mu|\mathcal{W}|+\alpha U_0}
      \,\frac{U(a,b,z_x)}{|\mathcal{W}|}
    = -c_\mu\,U(a,b,z_x) > 0,
  \]
  where the inequality holds since $c_\mu<0$ and $U(a,b,z_x)>0$.

  To verify $w_0+w_<+w_>=1$, recall that
  \[
    G_\alpha(x,y) = \frac{M(a,b,z_{x\wedge y})\,U(a,b,z_{x\vee y})}{|\mathcal{W}|}
  \]
  is the reflecting resolvent kernel (the case $c_\mu=0$), and use the decomposition
  \[
    G^{\rm sticky}_\alpha(x,y)
    = G_\alpha(x,y)
    + \frac{c_\mu\,U(a,b,z_x)\,U(a,b,z_y)}{|\mathcal{W}|},
  \]
  which follows from $f^{\rm sticky}_0 = M + c_\mu U$. Since
  $w_<+w_> = \alpha\int_0^\infty G^{\rm sticky}_\alpha(x,y)\,m'(y)\,dy$,
  the decomposition gives
  \[
    w_<+w_>
    = \alpha\int_0^\infty G_\alpha(x,y)\,m'(y)\,dy
    + \frac{\alpha c_\mu\,U(a,b,z_x)}{|\mathcal{W}|}\int_0^\infty U(a,b,z_y)\,m'(y)\,dy.
  \]
  The first term equals $1$ since $\alpha G_\alpha(x,\cdot)\,m'(\cdot)$
  is a probability density on $(0,\infty)$ (the boundary mass vanishes for
  the reflecting process; see \cref{eq:green-resolvent}).
  The second integral equals $|\mathcal{W}|/\alpha$ by the same argument at
  $x=0$, where $G_\alpha(0,y)=U(a,b,z_y)/|\mathcal{W}|$; that is,
  \begin{equation}\label{eq:U-integral}
    \int_0^\infty U(a,b,z_y)\,m'(y)\,dy = |\mathcal{W}|/\alpha,
  \end{equation}
  so the second term equals $c_\mu\,U(a,b,z_x)=-w_0$.
  Hence, $w_0+w_<+w_>=1$.
\end{proof} 

  \begin{corollary}[leave probability and exit distribution]
  \label[corollary]{cor:leave-zero}
  Let $T_\alpha\sim\operatorname{Exp}(\alpha)$ be independent of the sticky process $u$, with $u_0=0$.
  The probability that $u_{T_\alpha}\neq 0$  is
  \[
    p_{\mathrm{leave}}
    = \frac{\mu}{\mu+\Phi},
    \qquad
    \Phi = \frac{1}{G_\alpha(0,0)},
  \]
  where $G_\alpha$ is the reflecting resolvent kernel of \cref{green-function}.
  Conditional on $u_{T_\alpha}\neq 0$, the exit position is drawn from
  the density
  \begin{equation}\label{eq:exit-density}
    \nu_\alpha(dy) = Z_\nu^{-1}\,U(a,b,z_y)\,m'(y)\,dy,
    \qquad Z_\nu = |\mathcal{W}|/\alpha.
  \end{equation}
  \end{corollary}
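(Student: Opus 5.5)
The plan is to take the resolvent representation \cref{eq:trans} and pass to the limit $x\downarrow 0$, or equivalently evaluate it directly at $x=0$. Starting from the boundary, \cref{eq:green-resolvent} for the sticky process gives
\[
\mathbb P(u_{T_\alpha}\in A\mid u_0=0)=\frac{\alpha}{\mu}G^{\rm sticky}_\alpha(0,0)\mathbf 1_{0\in A}+\alpha\int_{A\cap(0,\infty)}G^{\rm sticky}_\alpha(0,y)\,m'(y)\,dy .
\]
Here I use the atom $m(\{0\})=1/\mu$ from \cref{thm-ergodicity}. Note that \cref{thm:transition-dist} was stated for $x>0$. To cover $x=0$, I would either note that the resolvent representation holds for every starting point of a regular diffusion, or let $x\downarrow0$ and invoke the Feller continuity of $x\mapsto P_t\varphi(x)$ from \cref{thm-sticky-wp}. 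Since $R_\alpha\varphi(x)=\alpha\int_0^\infty e^{-\alpha t}P_t\varphi(x)\,dt$, this continuity passes to $R_\alpha\varphi$ by dominated convergence. Meanwhile $G^{\rm sticky}_\alpha(x,y)$ is continuous in $x$ up to $0$, because $U(a,b,z_x)\to U_0$ for $b\in(0,1)$. This limiting step is the only point needing care, and it is where I expect the main (mild) obstacle.

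The atom term is then a pure computation. By \cref{lem:sticky-resolvent} together with \cref{eq:f0sticky-zero},
\[
G^{\rm sticky}_\alpha(0,0)=\frac{f_0^{\rm sticky}(0)U_0}{|\mathcal W|}=\frac{\mu U_0}{\mu|\mathcal W|+\alpha U_0},
\]
so that
\[
\mathbb P(u_{T_\alpha}=0)=\frac{\alpha U_0}{\mu|\mathcal W|+\alpha U_0}.
\]
Equivalently, this is $w_0(0)=-c_\mu U_0$. Its complement is $\mu|\mathcal W|/(\mu|\mathcal W|+\alpha U_0)$. Dividing numerator and denominator by $|\mathcal W|$ and using $G_\alpha(0,0)=U_0/|\mathcal W|$ from \cref{lemma-green-zero}(ii), I obtain $\mu/(\mu+\alpha G_\alpha(0,0))$.

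At this point I need to reconcile the $\alpha$ factor with the stated $\Phi=1/G_\alpha(0,0)$. Either the normalisation convention absorbs $\alpha$ (the definition of $R_\alpha$ contains a factor $\alpha$), or the stated $\Phi$ should be read as $\alpha G_\alpha(0,0)$. I would check this against the conventions of \cref{eq:green-resolvent} and present the formula in the form consistent with them, namely $p_{\rm leave}=\mu|\mathcal W|/(\mu|\mathcal W|+\alpha U_0)$.

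For the exit law, the interior part has density proportional to $G^{\rm sticky}_\alpha(0,y)m'(y)=f_0^{\rm sticky}(0)U(a,b,z_y)m'(y)/|\mathcal W|$. This is proportional to $U(a,b,z_y)m'(y)$, and its normalising constant is $|\mathcal W|/\alpha$ by \cref{eq:U-integral}, which gives \cref{eq:exit-density}. As a consistency check, the interior mass equals $f_0^{\rm sticky}(0)=p_{\rm leave}$, which confirms that the total mass is $1$.
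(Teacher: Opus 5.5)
Your proof is correct and reaches the leave probability by a different route from the paper; the exit-density part (set $x=0$, normalise with \cref{eq:U-integral}) is the same in both. The paper argues by excursions. The inverse local time of the reflecting $\tilde u$ at $0$ is a subordinator with exponent $\Phi=1/G_\alpha(0,0)$, so $\ell^0_{T_\alpha}(\tilde u)\sim\operatorname{Exp}(\Phi)$, and the paper pits this against an independent $\operatorname{Exp}(\mu)$ ``budget'' to obtain $\mu/(\mu+\Phi)$. You instead read the atom weight directly off the sticky Green function at $x=0$. Your treatment of the starting point $x=0$ is fine: \cref{eq:green-resolvent} holds for every starting point, so the limit $x\downarrow0$ is a convenience rather than a necessity.

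The discrepancy you flagged is genuine, and you should resolve it firmly rather than hedge. Drop the alternative that ``the normalisation absorbs $\alpha$''. $G_\alpha$ is fixed by \cref{eq:green-laplace} with no factor $\alpha$, and $G_\alpha(0,0)=U_0/|\mathcal{W}|$ by \cref{lemma-green-zero}. Hence $\mu/(\mu+1/G_\alpha(0,0))=\mu U_0/(\mu U_0+|\mathcal{W}|)$ genuinely differs from your $\mu|\mathcal{W}|/(\mu|\mathcal{W}|+\alpha U_0)=\mu/(\mu+\alpha G_\alpha(0,0))$, and yours is the correct value. Three independent checks confirm it.
\begin{itemize}
\item It equals $f_0^{\rm sticky}(0)$ of \cref{eq:f0sticky-zero}, which is what \cref{alg:sticky-cir} and the estimate $p_{\rm leave}\asymp h^{\delta/2}$ in \cref{lem:density-bound} actually use.
\item It is forced by $R_\alpha$-invariance of $\pi_0$ at the atom: $p_{\rm leave}\,\pi_0(\{0\})=\int_{(0,\infty)}w_0\,d\pi_0=-c_\mu|\mathcal{W}|/(\alpha Z)$.
\item As $\alpha\to0$ it gives $\mathbb P_0(u_{T_\alpha}=0)\to\pi_0(\{0\})$, as \cref{thm-ergodicity} requires. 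By contrast, $\Phi\to0$ (since $|\mathcal{W}|\propto1/\Gamma(a)$), so the stated formula would give $p_{\rm leave}\to1$.
\end{itemize}

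The paper's excursion argument is where this goes wrong. The time change $A_t=t+\mu^{-1}\ell^0_t(\tilde u)$ contains no independent $\operatorname{Exp}(\mu)$ budget, and $T_\alpha$ runs on the sticky clock, not the reflecting one. Done correctly, the inverse local time of $u$ is $\sigma_\ell=\tau_\ell+\ell/\mu$, with Laplace exponent $\Phi+\alpha/\mu$. The sojourn condition then gives $\mathbb P_0(u_{T_\alpha}=0)=\tfrac{\alpha}{\mu}\int_0^\infty\mathbb E[e^{-\alpha\sigma_\ell}]\,d\ell=\tfrac{\alpha/\mu}{\Phi+\alpha/\mu}$. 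Equivalently, per unit of local time the exponential clock rings inside an excursion at rate $\Phi$ and at the atom at rate $\alpha/\mu$, not $\mu$. Hence $p_{\rm leave}=\Phi/(\Phi+\alpha/\mu)$, which is exactly your formula.

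The corrected excursion route explains the answer as competing rates and needs only the reflecting quantity $G_\alpha(0,0)$, not $c_\mu$. Your route reuses the sticky resolvent already constructed, delivers the atom weight and the exit law from a single formula, and comes with a built-in total-mass check.
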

  
  \begin{proof}
       By the time-change construction $u_t = \tilde{u}_{T_t}$, the sticky process
remains at $0$ until the accumulated local time $\ell^0_{T_\alpha}(\tilde{u})$
exceeds an independent budget $\theta \sim \operatorname{Exp}(\mu)$.
To determine the distribution of $\ell^0_{T_\alpha}(\tilde{u})$, let
$\tau_\ell = \inf\{t : \ell^0_t(\tilde{u}) > \ell\}$ be the inverse local time
of the reflecting process $\tilde{u}$ at $0$; this is a subordinator with
Laplace exponent $\kappa(\alpha)$, meaning
$\mathbb{E}[e^{-\alpha \tau_\ell}] = e^{-\ell\,\kappa(\alpha)}$.
By the standard theory of one-dimensional diffusions
\citep[Section~6.2]{ItoMcKean1965}, the Laplace exponent equals
$\kappa(\alpha) = 1/G_\alpha(0,0) \eqqcolon \Phi$, where $G_\alpha(0,0) = U_0/|\mathcal W|$
is computed in \cref{lemma-green-zero}.
Since $T_\alpha \sim \operatorname{Exp}(\alpha)$ is independent of $\tilde{u}$,
\[
  \mathbb{P}\pp{\ell^0_{T_\alpha}(\tilde{u}) > \ell}
  = \mathbb{P}\pp{\tau_\ell < T_\alpha}
  = \mathbb{E}[e^{-\alpha \tau_\ell}]
  = e^{-\ell\,\Phi},
\]
so $\ell^0_{T_\alpha}(\tilde{u}) \sim \operatorname{Exp}(\Phi)$, independently
of $\theta$. The leave probability then follows from
$\mathbb{P}\pp{E_1 > E_2} = \mu/(\mu+\Phi)$ for independent
exponentials $E_1 \sim \operatorname{Exp}(\Phi)$, $E_2 \sim
\operatorname{Exp}(\mu)$.

  Setting $x=0$ in \cref{thm:transition-dist}, the exit density is
  $\nu_\alpha(dy)\propto G^{\rm sticky}_\alpha(0,y)\,m'(y)\,dy \propto
  U(a,b,z_y)\,m'(y)\,dy$, since $f_0^{\rm sticky}(0)$ is a constant
  independent of $y$. The normalising constant
  $Z_\nu = |\mathcal{W}|/\alpha$
  follows from~\cref{eq:U-integral}.
  \end{proof}

\subsection{Sticky CIR sampler (no potential)}\label{sampler-gzero}

We construct an exact sampler for the invariant measure $\pi_0$ of the sticky CIR process (with zero potential $G=0$).  The chain samples the sticky CIR process exactly at independent exponential times: given $u_k = x$, draw $T_\alpha\sim\operatorname{Exp}(\alpha)$ and set $u_{k+1}\sim\mathbb{P}(u_{T_\alpha}\in\cdot\mid u_0=x)$.  Since the continuous-time process is ergodic (\cref{thm-ergodicity}), the chain returns $\pi_0$ as its invariant distribution.  The closed-form three-component mixture~\cref{eq:trans-mixture} of \cref{thm:transition-dist} reduces each draw to sampling from three explicit components: an atom at the boundary, a density on $(0,x)$, and a density on $(x,\infty)$, each sampled in turn below.

Two integrals are needed for each starting point $x$:
\[
  I_>(x) = \int_x^\infty U(a,b,z_y)\,m'(y)\,dy, \qquad
  I_<(x) = \int_0^x f_0^{\rm sticky}(y)\,m'(y)\,dy.
\]
Since $f_0^{\rm sticky}(y) = M(a,b,z_y) + c_\mu\,U(a,b,z_y)$, the second
integral splits as $I_<(x) = I_<^M(x) + c_\mu\,I_<^U(x)$, where
\[
  I_<^M(x) = \int_0^x M(a,b,z_y)\,m'(y)\,dy, \qquad
  I_<^U(x) = \int_0^x U(a,b,z_y)\,m'(y)\,dy.
\]
In \cref{alg:sticky-cir}, both $I_<^U$ and $I_>$ are precomputed on a truncated grid $[0,y_{\max}]$ by the trapezium rule; the upper limit $y_{\max}$ is chosen so that the tail $\int_{y_{\max}}^\infty U(a,b,z_y)\,m'(y)\,dy$ is negligible, which is justified by the rapid decay of $U(a,b,z_y)$ as $y\to\infty$.
The remaining integral $I_<^M$ is recovered without further quadrature using
the reflecting resolvent identity (i.e., \cref{thm:transition-dist} with $\mu\to \infty$ and $c_\mu=0$):
\begin{equation}\label{eq:resolvent-identity}
  \frac{\alpha}{|\mathcal{W}|}\bigl[M(a,b,z_x)\,I_>(x) +
  U(a,b,z_x)\,I_<^M(x)\bigr] = 1,
\end{equation}
which gives $I_<^M(x) = \bigl(|\mathcal{W}|/\alpha -
M(a,b,z_x)\,I_>(x)\bigr)/U(a,b,z_x)$.  

From $u_k = x > 0$, the next state is drawn from the three-component mixture~\cref{eq:trans-mixture} with weights~\cref{eq:w0,eq:wgt}:
\begin{description}
  \item[Atom at $0$] (weight $w_0(x)$): set $u_{k+1}=0$.

  \item[Above $x$] (weight $w_>(x)$): density $U(a,b,z_y)\,m'(y)$ on $(x,\infty)$, sampled by CDF inversion against the precomputed $I_>$.

  \item[Below $x$] (weight $w_<(x)$): density $f_0^{\rm sticky}(y)\,m'(y)$ on $(0,x)$, sampled by CDF inversion against $I_<$ (recovered from $I_>$ and $I_<^U$ via \cref{eq:resolvent-identity}).
\end{description}

From $u_k = 0$, the leave probability $p_{\rm leave}$ and exit
distribution $\nu_\alpha$ are given by \cref{cor:leave-zero}: with
probability $p_{\rm leave}$ the process leaves $0$ and the landing
position is drawn from $\nu_\alpha$ (see \cref{rem:exit-sampler} below), and with probability
$1-p_{\rm leave}$ it remains at $0$.

\begin{remark}[sampling the exit density]\label[remark]{rem:exit-sampler}
To draw $y\sim\nu_\alpha$, we use rejection sampling. First, substitute $w = z_y = \lambda\beta y^2/2$, so the speed measure density becomes $m'(y)\,dy \propto w^{b-1}e^{-w}\,dw$ (using $y^{\delta-2}\propto w^{b-1}$ with $b=\delta/2$), which is proportional to the $\mathrm{Gamma}(b,1)$ density. Therefore, we propose $y=\sqrt{2w/(\lambda\beta)}$ for $w\sim\mathrm{Gamma}(b,1)$. The remaining factor $U(a,b,w)$ in \cref{eq:exit-density} is bounded by $U_0=U(a,b,0)$ since $U(a,b,\cdot)$ is strictly decreasing on $[0,\infty)$. Hence, we accept with probability $U(a,b,w)/U_0$.
\end{remark}
\begin{remark}
  The functions $M(a,b,z)$ and $U(a,b,z)$ are the confluent hypergeometric
  functions of the first and second kind, available in Python as
  \texttt{scipy.special.hyp1f1} and \texttt{scipy.special.hyperu}
  respectively.  The cost per step is dominated by the offline precomputation of $I_<^U$ and $I_>$ on a grid of starting points $x$.  The precomputation cost is $O(N)$ evaluations
  of $M$ and $U$ on the grid of $N$ points, done once per
  $(\alpha,\lambda,\beta,\delta)$.  The per-step cost is $O(\log N)$ for the
  binary-search CDF inversion, plus two evaluations of $M$ and $U$ at the
  current state $x$.  The sampler reuses the same grid for all values of
  $\mu$: only the scalar $c_\mu$ and $p_{\rm leave}$ change.
\end{remark}

\begin{algorithm}
  \caption{Exact sampler for the sticky CIR (no potential, $G=0$)}
  \label[algorithm]{alg:sticky-cir}
  \begin{algorithmic}[1]
  \Statex \textsc{Precomputation}
  \State Set $a=\alpha/(2\lambda)$, $b=\delta/2$.
         Compute $U_0$ and $|\mathcal{W}|$ via \cref{lemma-green-zero}.
  \State On a grid $y_j$ in the truncated domain $[0,y_{\max}]$, compute quadrature approximations to the integrals
         $I_<^U(y_j) = \int_0^{y_j} U(a,b,z_y)\,m'(y)\,dy$ and $I_>(y_j) = \int_{y_j}^{y_{\rm max}} U(a,b,z_y)\,m'(y)\,dy$.
  \State Set $c_\mu = -\alpha/(\alpha U_0 + \mu|\mathcal{W}|)$
         and $p_{\rm leave} = \mu|\mathcal{W}|/(\alpha U_0 + \mu|\mathcal{W}|)$.
         \Statex    
  \Statex \textsc{Case 1: } $u_k = x > 0$ (interior).
  \State Evaluate $U_x = U(a,b,z_x)$, $M_x = M(a,b,z_x)$,
         $f_{0,x} = M_x + c_\mu\,U_x$.
  \State Interpolate $I_<^U(x)$ and $I_>(x)$ from the grid; compute
         $I_<^M(x) = (|\mathcal{W}|/\alpha - M_x\,I_>(x))/U_x$
         and $I_<(x) = I_<^M(x) + c_\mu\,I_<^U(x)$.
  \State Compute weights $w_0 = -c_\mu\,U_x$,
         $w_< = \alpha U_x\,I_<(x)/|\mathcal{W}|$,
         $w_> = \alpha f_{0,x}\,I_>(x)/|\mathcal{W}|$.
  \State Draw $V\sim\mathrm{Uniform}(0,1)$.
  \If{$V < w_0$}
    \State $u_{k+1} = 0$.
  \ElsIf{$V < w_0 + w_<$}
    \State Draw $u_{k+1}$ from $f_0^{\rm sticky}(y)\,m'(y)$ on $(0,x)$
           by CDF inversion.
  \Else
    \State Draw $u_{k+1}$ from $U(a,b,z_y)\,m'(y)$ on $(x,\infty)$
           by CDF inversion.
  \EndIf
  \Statex
  \Statex \textsc{Case 2:} $u_k = 0$ (boundary).
  \State Draw $B\sim\mathrm{Bernoulli}(p_{\rm leave})$.
  \If{$B = 0$}
    \State $u_{k+1} = 0$.
  \Else
    \State Propose $w\sim\operatorname{Gamma}(b,1)$; set
           $y = \sqrt{2w/(\lambda\beta)}$; accept with probability
           $U(a,b,w)/U_0$.  If rejected, repeat.  Set $u_{k+1} = y$.
  \EndIf
  \end{algorithmic}
\end{algorithm}

\section{Extension to a potential function}
\label{sec:potential}
We are ready to treat the full SDE~\eqref{eq-CIR-potential}, which includes a non-trivial potential $G$. We assume $G \in C^1([0,\infty))$ with $G$ bounded below, and that $G'$ satisfies a linear growth bound: $|G'(u)| \leq C(1 + u)$ for some $C > 0$. Under this assumption, two results carry over from the $G = 0$ case. 
The first is well-posedness: the linear-growth assumption on $G'$ is exactly what is needed for a Girsanov change of measure against the sticky CIR process of \cref{thm-sticky-wp}.

\begin{theorem}[well-posedness]\label{thm-exist-potential}
Under the above assumptions on $G$, there exists a unique weak solution to \cref{eq-CIR-potential} with sticky boundary at $0$.  As for the $G=0$ process (\cref{thm-sticky-wp}), $u_t$ is a regular diffusion on $[0,\infty)$ with a Feller transition semigroup $P_t\varphi(x)\coloneqq\mathbb{E}[\varphi(u_t)\mid u_0=x]$.
\end{theorem}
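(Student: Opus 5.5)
The plan is a Girsanov change of measure applied on top of the $G=0$ sticky CIR process of \cref{thm-sticky-wp}. Let $u$ be that process on a filtered space $(\Omega,\mathcal F_t,\mathbb P)$, with driving Brownian motion $W$ and $u_0=x$. Set
\[
  \theta_t = -\sqrt{\tfrac{\beta}{2}}\,G'(u_t)\,\mathbf 1_{\{u_t>0\}},
\]
and let $\mathcal E_t$ be the stochastic exponential of $\int_0^t\theta_s\,dW_s$. Under $d\mathbb Q|_{\mathcal F_t}=\mathcal E_t\,d\mathbb P|_{\mathcal F_t}$ the process $\widehat W_t=W_t-\int_0^t\theta_s\,ds$ is a Brownian motion, and in terms of $\widehat W$ the interior drift acquires the extra term $-G'(u_t)$.

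The set $\{u=0\}$ has positive Lebesgue measure. Two facts keep the sojourn condition intact there. First, the Girsanov density affects only the martingale part, and $d\langle u\rangle=(2/\beta)\mathbf 1_{\{u>0\}}dt$ on the sticky set in the time-change representation. Second, the sojourn relation $\mathbf 1_{\{u_t=0\}}dt=\mu^{-1}d\ell^0_t$ is a pathwise identity, so it is preserved under the absolutely continuous change of measure $\mathbb Q\ll\mathbb P$.

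The factor $\exp(-\beta G(0))$ in \cref{eq-CIR-potential} needs separate handling. The diffusion local time is normalised by the speed measure, and the speed measure of the tilted process is $e^{-\beta G}m$. Its atom is therefore $e^{-\beta G(0)}/\mu$, and the sojourn condition should be read in this renormalised local time. To make this explicit, I would build the base process with stickiness $\mu e^{\beta G(0)}$, so that after tilting the condition holds exactly in the stated form.

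The main obstacle is showing that $\mathcal E$ is a true martingale rather than only a local one. I would localise on $[0,T]$ using the linear growth $|G'(u)|\le C(1+u)$ and apply the Beneš-type criterion, which works well here. The key input is a moment bound $\sup_{t\le T}\mathbb E\,e^{\varepsilon u_t^2}<\infty$ for small $\varepsilon$. I would obtain it from the squared-Bessel representation $Y=u^2$ in \cref{eq-CIR2}: time change only slows the process, and CIR moment generating functions are explicit. With this bound, Novikov's criterion holds on short intervals $[t_k,t_{k+1}]$, and patching these intervals gives $\mathbb E\mathcal E_T=1$. This yields existence of a weak solution.

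Uniqueness in law uses the reverse direction. For any weak solution, the inverse Girsanov density with $-\theta$ is again a true martingale, by the same moment bound, now derived for the tilted process. Here $G$ bounded below and the linear growth of $G'$ give a Lyapunov function $u^2$. This maps any weak solution to the sticky CIR with $G=0$, whose law is unique by \cref{thm-sticky-wp}. Hence the law of the tilted solution equals $\mathcal E_T\cdot\mathbb P$ and is unique.

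Regularity follows from mutual absolute continuity on each $\mathcal F_T$, since hitting events have positive probability under $\mathbb Q$ exactly when they do under $\mathbb P$. The strong Markov property is inherited because $\mathcal E$ is a multiplicative functional. The Feller property follows the same route as \cref{thm-sticky-wp}: the process is a regular diffusion with scale $s'e^{\beta G}$, speed $e^{-\beta G}m$, and a regular sticky boundary, so the Itô--McKean classification gives a strongly continuous contraction semigroup on $C_0([0,\infty))$ with continuous dependence on $x$.
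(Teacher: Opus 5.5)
Your overall route is the paper's: tilt the $G=0$ sticky CIR process of \cref{thm-sticky-wp} by Girsanov, show the density is a true martingale using an exponential second-moment bound and the linear growth of $G'$, obtain uniqueness in law by reversing the tilt, and inherit regularity and the Feller property from the base process. One step is wrong, however: changing the base stickiness to $\mu e^{\beta G(0)}$.

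The base relation $\mathbf 1_{\{u_t=0\}}\,dt=\mu^{-1}\,d\ell^0_t$ holds pathwise, with $\ell^0$ normalised by the driftless speed density $m_0'$, and the change of measure leaves it unchanged. In the paper's convention, $\ell^0_t(u)$ is the diffusion local time of $u$ itself, normalised by its own speed density $e^{-\beta G}m_0'$ (as you say). Renormalising multiplies the local time by $e^{\beta G(0)}$, since $\int_0^\varepsilon e^{-\beta G}m_0'\,dx\sim e^{-\beta G(0)}\int_0^\varepsilon m_0'\,dx$. So tilting the base process with stickiness $\mu$ already gives exactly $\mathbf 1_{\{u_t=0\}}\,dt=\mu^{-1}e^{-\beta G(0)}\,d\ell^0_t(u)$, the stated sojourn condition, with atom $m(\{0\})=e^{-\beta G(0)}/\mu$. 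The factor $e^{-\beta G(0)}$ is produced by the tilt itself. Starting from $\mu e^{\beta G(0)}$ counts it twice and gives $m(\{0\})=e^{-2\beta G(0)}/\mu$. That contradicts your own preceding sentence and the Gibbsian form $\pi\propto e^{-\beta G}\pi_0$ of \cref{thm-inv-potential}, which rests on $m(\{0\})=e^{-\beta G(0)}/\mu$. Keep the base stickiness $\mu$, as the paper does.

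The other differences are legitimate. The paper gets the exponential moment from the Lyapunov function $e^{\gamma x^2}$, $\gamma<\lambda\beta/2$, plus Khasminskii's lemma, and applies Novikov on $[0,T]$. Your squared-Bessel comparison with Novikov on short subintervals, patched together, is the more careful version. An exponential second-moment bound controls $\mathbb E\exp\bigl(c\int_0^T u_s^2\,ds\bigr)$, with $c$ proportional to $C^2$, only for small $T$.

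Two details need tightening. First, $u_t=\tilde u_{T_t}$ with $T_t\le t$ a bounded random stopping time, so a fixed-time CIR moment generating function does not by itself bound $\mathbb E\,e^{\varepsilon u_t^2}$. Instead, apply optional stopping at $T_t$ to the nonnegative supermartingale $\exp(\varepsilon\tilde u_s^2-2\delta\varepsilon s/\beta)$ with $\varepsilon\le\lambda\beta/2$; this gives $\mathbb E\,e^{\varepsilon u_t^2}\le e^{\varepsilon x^2+2\delta\varepsilon t/\beta}$. Second, in the reverse direction the Lyapunov function $u^2$ yields non-explosion, not the exponential moments Novikov needs. Localise at the exit times $\tau_n$ of $[0,n]$, where the inverse density has a bounded integrand, and remove the localisation using the non-explosion that $u^2$ provides.
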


\begin{proof}
Let $\tilde u_t$ be the sticky CIR process of \cref{thm-sticky-wp} on $(\Omega,\mathcal{F},(\mathcal{F}_t),\mathbb{P})$ with driving Brownian motion $W_t$, and define the exponential local martingale
\[
  M_t = \exp\!\left(-\sqrt{\tfrac{\beta}{2}} \int_0^t G'(\tilde u_s)\,dW_s - \tfrac{\beta}{4}\int_0^t |G'(\tilde u_s)|^2\,ds\right).
\]
The linear-growth bound on $G'(u)$ reduces Novikov's condition to a uniform exponential second-moment bound for $\tilde u$ on $[0,T]$. Taking $V(x)=e^{\gamma x^2}$ with $\gamma\in(0,\lambda\beta/2)$ as a Lyapunov function gives $\mathcal{L}V \le -\eta x^2 V + K$ (cf.~\cref{lem:lyapunov}), and Khasminskii's lemma \citep[Lemma~1.2]{MeynTweedie1993} supplies the required bound. Hence, $(M_t)_{t\in[0,T]}$ is a true martingale, and Girsanov's theorem makes $W_t^{\mathbb{Q}} = W_t + \sqrt{\beta/2}\int_0^t G'(\tilde u_s)\,ds$ a Brownian motion under $d\mathbb{Q} = M_T\,d\mathbb{P}|_{\mathcal{F}_T}$. Under $\mathbb{Q}$, $\tilde u$ satisfies \cref{eq-CIR-potential}; the sticky boundary condition, being a sample-path property, is preserved. Uniqueness in law follows by reversing the transformation.  The tilt adds only the locally bounded interior drift $-G'$ (as $G\in C^1$) and leaves the diffusion coefficient and the sticky boundary at $0$ unchanged, so $u_t$ inherits from $\tilde u$ the structure of a regular diffusion on $[0,\infty)$ with a Feller transition semigroup, by the classification of regular one-dimensional diffusions used in \cref{thm-sticky-wp}.
\end{proof}

The second result identifies the invariant measure: scale and speed-measure calculations from \cref{eq-scale-speed-sde} show that the potential reweights the $G=0$ invariant measure by the Gibbsian factor $e^{-\beta G}$, both on the interior and at the sticky atom.

\begin{theorem}[invariant measure]\label{thm-inv-potential}
  Suppose that $G \in C^1([0,\infty))$ and $e^{-\beta G}$ is integrable
  with respect to $\pi_0$. Then, \cref{eq-CIR-potential}
  has a unique invariant probability measure $\pi$, given by the Gibbsian
  reweighting of $\pi_0$:
  \[
    \frac{d\pi}{d\pi_0}(u) = \frac{1}{Z}\,e^{-\beta G(u)},
    \qquad
    Z = \int_{[0,\infty)} e^{-\beta G(u)}\,\pi_0(du).
  \]
\end{theorem}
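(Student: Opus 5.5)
The plan is to mirror the proof of \cref{thm-ergodicity}: compute the scale function and speed measure of \cref{eq-CIR-potential}, identify the atom at $0$ from the sojourn condition, and show that the normalised speed measure equals $Z^{-1}e^{-\beta G}\pi_0$. Uniqueness then follows from the same Meyn--Tweedie argument, applied to the regular diffusion supplied by \cref{thm-exist-potential}.

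\emph{Step 1 (interior).} Apply \cref{eq-scale-speed-sde} with drift $b_G(x)=(\delta-1)/(\beta x)-\lambda x-G'(x)$ and $\sigma^2=2/\beta$. This gives $-2b_G/\sigma^2=-\beta b_G$, whose antiderivative contributes the extra factor $e^{\beta G}$. Hence
\[
s_G'(x)=x^{1-\delta}e^{\lambda\beta x^2/2}e^{\beta G(x)}, \qquad m_G'(x)=\beta x^{\delta-1}e^{-\lambda\beta x^2/2}e^{-\beta G(x)},
\]
so the interior part is exactly $e^{-\beta G}$ times the Lebesgue part of the $G=0$ speed measure. Because $G\in C^1([0,\infty))$, $e^{\pm\beta G}$ is bounded and positive near $0$. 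The boundary classification of \cref{eq:scale-speed-cir} (accessible, $s_G(0+)>-\infty$) is therefore unchanged, consistent with \cref{thm-exist-potential}.

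\emph{Step 2 (atom).} The sojourn condition in \cref{eq-CIR-potential} reads $\mathbf 1_{\{u_t=0\}}dt=\mu^{-1}e^{-\beta G(0)}d\ell^0_t$. By the discussion after \cref{diffusion-local-time}, this identifies $m_G(\{0\})=e^{-\beta G(0)}/\mu$, which is $e^{-\beta G(0)}$ times the atom $1/\mu$ of the $G=0$ case. Thus $m_G(du)=e^{-\beta G(u)}\,m(du)$ on all of $[0,\infty)$, where $m$ is the speed measure from the proof of \cref{thm-ergodicity}. This is precisely why the factor $e^{-\beta G(0)}$ is built into the sojourn condition.

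One subtlety needs checking here. The diffusion local time is normalised by the speed measure, and $m_G$ differs from $m$ near $0$ by the continuous factor $e^{-\beta G}$. I would verify that, with $\ell^0$ normalised consistently with $s_G$, the limit in \cref{diffusion-local-time} produces exactly this atom. The natural route is the Girsanov construction of \cref{thm-exist-potential}: the time-change construction $A_t=t+\kappa\,\ell^0_t$ applied to the reflecting process with potential, where the scale normalisation at $0$ is $s_G'\sim e^{\beta G(0)}x^{1-\delta}$.

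\emph{Step 3 (invariance and uniqueness).} The measure $m_G$ has finite total mass, namely $Z\cdot m([0,\infty))$, by the integrability hypothesis on $e^{-\beta G}$ against $\pi_0$. Since the speed measure of a regular diffusion is invariant, $\pi=m_G/m_G([0,\infty))=Z^{-1}e^{-\beta G}\pi_0$ is an invariant probability measure. The process is conservative and $m_G$-irreducible (it is regular by \cref{thm-exist-potential}), so \citet[Theorem~10.0.1]{MeynTweedie1993} gives uniqueness exactly as in \cref{thm-ergodicity}.

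\emph{Main obstacle.} I expect the delicate point to be Step 2: justifying rigorously that the speed-measure atom is $e^{-\beta G(0)}/\mu$. This needs a careful normalisation of the local time relative to the $G$-dependent scale. If the direct limit is awkward, my fallback is a flux argument as in \cref{thm-pdf0}. I would check that $p=\pi/m_G'$ is constant, so the scale flux vanishes, and that the Wentzell condition $\partial_{s_G}p(0^+)=\mu e^{\beta G(0)}\partial_t p(0)$ holds trivially for the stationary $p$. That confirms stationarity directly.
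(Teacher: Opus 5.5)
Your proposal is correct and follows the paper's proof essentially step for step. The paper likewise computes $s_G'=s_0'\,e^{\beta G}$ and $m_G'=e^{-\beta G}m_0'$, reads off the atom $m_G(\{0\})=e^{-\beta G(0)}/\mu$ from the sojourn condition via the remark after \cref{diffusion-local-time} (it does not discuss the local-time normalisation you flag), and concludes from the finiteness of $m_G$ and \citet[Theorem~10.0.1]{MeynTweedie1993} for uniqueness.

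One caution about your fallback flux check: it cannot fix the atom on its own. For a stationary $p$, both sides of the Wentzell condition vanish whatever its coefficient; that coefficient should in any case be $e^{-\beta G(0)}/\mu$, not its reciprocal. The atom's weight is tied to the interior only through continuity $p(0)=p(0^+)$ with $p(0)=\pi(\{0\})/m_G(\{0\})$, and this presupposes the value of $m_G(\{0\})$ you were trying to verify.
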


\begin{proof}
  A regular diffusion with finite speed measure $m$ has unique invariant
  probability measure $m/m([0,\infty))$ (\cref{thm-ergodicity}), so it
  suffices to compute $m$ and show it has finite total mass proportional
  to $e^{-\beta G}\pi_0$.

  The drift $b(u) = (\delta-1)/(\beta u) - \lambda u - G'(u)$ equals $H'(u)$
  for $H(u) = \frac{\delta-1}{\beta}\log u - \lambda u^2/2 - G(u)$, so
  by~\cref{eq-scale-speed-sde} the scale derivative is
  \[
    s'(u) = e^{-\beta H(u)}
          = u^{1-\delta}\,e^{\lambda\beta u^2/2}\,e^{\beta G(u)}
          = s_0'(u)\,e^{\beta G(u)},
  \]
  where $s_0'$ is the scale derivative of the base ($G \equiv 0$) process.
  The speed measure density is then
  \[
    m'(u) = \frac{\beta}{s'(u)} = e^{-\beta G(u)}\,m_0'(u),
    \qquad
    m_0'(u) = \beta u^{\delta-1}\,e^{-\lambda\beta u^2/2}.
  \]
  The factor $e^{-\beta G(0)}$ in the sojourn condition
  of~\cref{eq-CIR-potential} is chosen so that the atom rescales by the
  same factor as the Lebesgue part: by the remark following
  \cref{diffusion-local-time},
  \[
    m(\{0\}) = \frac{e^{-\beta G(0)}}{\mu} = e^{-\beta G(0)}\,m_0(\{0\}).
  \]
  Hence, $m(du) = e^{-\beta G(u)}\,m_0(du)$ on all of $[0,\infty)$, and
  since $\pi_0 \propto m_0$, we have $m \propto e^{-\beta G}\pi_0$.
  The total mass $Z < \infty$ by hypothesis, so
  $\pi = e^{-\beta G}\pi_0 / Z$ is an invariant probability measure.

  Uniqueness follows as in~\cref{thm-ergodicity}: $G \in C^1([0,\infty))$
  ensures $e^{-\beta G}$ is bounded above and below on compacts, so $m$
  and $m_0$ have the same null sets, irreducibility transfers from the
  base process, and \citet[Theorem~10.0.1]{MeynTweedie1993} gives uniqueness.
\end{proof}

\subsection{MCMC Sampler with potential \texorpdfstring{$G$}{G}}\label{sampler-gpotential}

We develop a Metropolis--Hastings sampler for the invariant measure
$\pi$ of the sticky CIR process with potential $G$, using a proposal that
combines a single Euler step on the ODE $\dot x = -G'(x)$ with an exact draw
from the sticky potential-free  CIR resolvent kernel (as in \cref{sampler-gzero}).
The Metropolis--Hastings correction
accounts for the potential $G$ and the asymmetry introduced by starting the
CIR draw from the Euler-shifted point (defined below) rather than from
$x$ itself.

\paragraph{Proposal kernel} An Euler step from $x$ may overshoot the boundary; we route any such update back to $0$. Write $\mathcal R_h \coloneqq \{x>0 : x \le G'(x)h\}$ for the set of points from which the Euler step would exit the domain, and define the (clamped) Euler map
\[
  \phi_h(x) = \begin{cases}
    x - G'(x)\,h, & x > 0,\ x\notin\mathcal{R}_h,\\
    0,            & x = 0\ \text{or}\ x\in\mathcal{R}_h.
  \end{cases}
\]
We derive the proposal kernel $q(x\to v)$ for the move $x\to v$ by applying the transition distribution of \cref{thm:transition-dist} with starting point $s = \phi_h(x)$.
The proposal  has an interior density on $(0,\infty)$
and an atom at $0$, so
\begin{equation}\label{eq:proposal-q}
  q(x\to v)
  = \begin{cases}
    \alpha\,G_\alpha^{\rm sticky}(s,v)\,m'(v), & v > 0,\\[1pt]
    w_0(s),                                    & v = 0.
  \end{cases}
\end{equation}
At $s = 0$, \cref{eq:proposal-q} reduces to the boundary kernel of \cref{cor:leave-zero}: $\alpha\,G^{\rm sticky}_\alpha(0,v)\,m'(v)\,dv = p_{\rm leave}\,\nu_\alpha(dv)$ on $(0,\infty)$ (using $f_0^{\rm sticky}(0) = p_{\rm leave}$ from \cref{eq:f0sticky-zero} and $Z_\nu = |\mathcal{W}|/\alpha$), with atom $w_0(0) = 1 - p_{\rm leave}$.

In log-form, the speed-measure factors of $q$ and $\pi$ cancel pairwise in the Metropolis--Hastings ratio. Define
\begin{align*}
  \ell_+(s,v) &= \log\alpha + \log G^{\rm sticky}_\alpha(s,v), \qquad v > 0,\\[2pt]
  \ell_0(s)   &= \log w_0(s),
\end{align*}
so that at $s = 0$, $\ell_+(0,v) = \log p_{\rm leave} + \log U(a,b,z_v) - \log Z_\nu$ and $\ell_0(0) = \log(1 - p_{\rm leave})$. The same formulas apply to the reverse move $y\to x$ with $s = \phi_h(y)$. 

Proposals are accepted with probability
$\min\{1,\rho\}$, where
\[
  \rho (x\to y)= \frac{q(y\to x)\,\pi(y)}{q(x\to y)\,\pi(x)}
\]
is the Metropolis--Hastings ratio. We detail $\log \rho$ for the three cases $x\to y$ below.
\begin{description}
  \item[Interior $\to$ interior ($x\to y$, $x,y>0$).]
  \begin{equation}\label{eq:logrho-int}
    \log\rho(x\to y)
    = \beta\bigl(G(x)-G(y)\bigr)
    + \ell_+(\phi_h(y),\,x) - \ell_+(\phi_h(x),\,y).
  \end{equation}

  \item[Interior $\to$ boundary ($x\to 0$, $x>0$).]
  The reverse move $0\to x$ always uses the boundary kernel, giving
  \begin{gather}
    \label{eq:logrho-bdy}
    \begin{split}
      \log\rho(x\to 0)
    &= \beta\bigl(G(x)-G(0)\bigr) - \log\mu\\
&\quad    + \log p_{\rm leave} + \log U(a,b,z_x)
    - \log Z_\nu - \ell_0(\phi_h(x)),
  \end{split}
  \end{gather}
  where $\ell_0(\phi_h(x))$ equals $\log w_0(\phi_h(x))$ if $\phi_h(x) > 0$ (unrouted)
  and $\log(1-p_{\rm leave})$ if $\phi_h(x) = 0$ (routed).

  \item[Boundary $\to$ interior ($0\to y$, $y>0$).]
  By detailed balance, $\log\rho(0\to y) = -\log\rho(y\to 0)$:
  \begin{gather}\label{eq:logrho-from0}
   \begin{split} \log\rho(0\to y)
    &= \beta\bigl(G(0)-G(y)\bigr) + \log\mu\\
&\quad    - \log p_{\rm leave} + \log Z_\nu
    - \log U(a,b,z_y) + \ell_0(\phi_h(y)),
  \end{split}\end{gather}
  where $\ell_0(\phi_h(y))$ equals $\log w_0(\phi_h(y))$ if $\phi_h(y) > 0$ and
  $\log(1-p_{\rm leave})$ if $\phi_h(y) = 0$.
\end{description}

\begin{algorithm}
  \caption{MCMC sampler for the sticky CIR with potential $G$}
  \label[algorithm]{alg:mcmc}
\begin{algorithmic}[1]
\Statex \textsc{Precomputation.} Resolvent tables $I^U_<,I_>$, $c_\mu$ and $p_{\rm leave}$ as in \cref{alg:sticky-cir}; $Z_\nu=|\mathcal{W}|/\alpha$.
\Statex
\State \textbf{Propose.} With $\phi_h(u_k)=\max\bigl(u_k-G'(u_k)\,h,\,0\bigr)$, draw $y\sim P_{\phi_h(u_k)}^{G=0}$ from \cref{alg:sticky-cir} --- Case~1 if $\phi_h(u_k)>0$, else Case~2 (an over-shooting Euler step is routed to the boundary).
\State \textbf{Correct.} Set $u_{k+1}=y$ with probability $\min(1,e^{\log\rho})$, else $u_{k+1}=u_k$, where $\log\rho$ is given by \eqref{eq:logrho-int}, \eqref{eq:logrho-bdy}, or \eqref{eq:logrho-from0} according as $u_k\to y$ is interior$\to$interior, interior$\to$boundary, or boundary$\to$interior (with $\log\rho=0$ for $0\to0$).
\end{algorithmic}
\end{algorithm}
The MCMC algorithm is detailed in \cref{alg:mcmc} and the experiments are reported in \cref{sec:experiments}.

\section{Unadjusted Langevin sampler with mixed times}\label{sampler-ula}

The MCMC sampler of \cref{sampler-gpotential} combines a deterministic
Euler step with an exact draw from the sticky CIR resolvent kernel, followed by
a Metropolis--Hastings correction that enforces exact invariance with
respect to~$\pi$.  Removing the Metropolis--Hastings correction gives an \emph{unadjusted
Langevin algorithm} (ULA) that is cheaper per step but targets a biased
stationary distribution~$\pi_h$.

\paragraph{Splitting interpretation}
Decompose the generator of~\cref{eq-CIR-potential} as
$\mathcal{L} = \mathcal{L}_1 + \mathcal{L}_2$, where
\[
  \mathcal{L}_1 f = \frac{d}{dm_0}\frac{df}{ds_0}
\]
is the generator of the sticky potential-free CIR process (i.e., with $G \equiv 0$), and
\[
  \mathcal{L}_2 f = -G'(x)\,f'(x)
\]
is deterministic transport along $\dot x = -G'(x)$ on $(0,\infty)$.
Naively applying the Lie--Trotter split $e^{h\mathcal{L}} \approx e^{h\mathcal{L}_1} e^{h\mathcal{L}_2}$ is ambiguous here because the transport flow $\Phi_t$ of $\dot x = -G'(x)$ may leave $[0,\infty)$ in finite time: from states $x \in \mathcal{R}_h^{\mathrm{cts}} \coloneqq \{x > 0 : \Phi_t(x) = 0 \text{ for some } t \leq h\}$ the flow reaches $0$ and, where $G'(0)>0$, would continue to negative $x$, outside the domain of $G'$, so $e^{h\mathcal{L}_2}f(x)=f(\Phi_h(x))$ is undefined. We extend $e^{h\mathcal{L}_2}$ to
$[0,\infty)$ by depositing such mass at the boundary:
\begin{equation}\label{eq:transport-ext}
  \bigl(e^{h\mathcal{L}_2} f\bigr)(x) =
  \begin{cases}
    f(\Phi_h(x)), & x \notin \mathcal{R}_h^{\mathrm{cts}}, \\
    f(0), & x \in \{0\} \cup \mathcal{R}_h^{\mathrm{cts}}.
  \end{cases}
\end{equation}
This corresponds to routing states $x \in \mathcal{R}_h$
to the boundary kernel $P_0^{G=0}$ in the MCMC proposal. The subsequent sticky CIR step
$e^{h\mathcal{L}_1}$ then evolves mass at $0$ according to the sticky
boundary behaviour, so the split $e^{h\mathcal{L}} \approx e^{h\mathcal{L}_1} e^{h\mathcal{L}_2}$ is well-defined. 

We will use a first-order \emph{mixed-time splitting} approximation with a fixed
time $h=1/\alpha$ for the ODE part and an independent
$\tau\sim\operatorname{Exp}(\alpha)$ for the CIR part:
\[
  e^{(\mathcal{L}_1+\mathcal{L}_2)h}\varphi
   \approx 
  \mathbb{E}_\tau\!\left[e^{\mathcal{L}_1\tau}\,e^{\mathcal{L}_2h}\varphi\right]
  = \alpha(\alpha-\mathcal{L}_1)^{-1}\,e^{\mathcal{L}_2h}\varphi.
\]
The identity $\mathbb{E}_\tau[e^{\mathcal{L}_1\tau}] = \alpha(\alpha-\mathcal{L}_1)^{-1}$
uses the fact that the Laplace transform of the semigroup at
$\tau\sim\operatorname{Exp}(\alpha)$ is the resolvent.  Both the fixed-time
and the exponential-time approximations agree to first order in $h$:
expanding in $h=1/\alpha$ gives
\[
  \alpha(\alpha-\mathcal{L}_1)^{-1}e^{\mathcal{L}_2h}\varphi
  = \varphi + h(\mathcal{L}_1+\mathcal{L}_2)\varphi + O(h^2),
\]
matching $e^{(\mathcal{L}_1+\mathcal{L}_2)h}\varphi = \varphi +
h(\mathcal{L}_1+\mathcal{L}_2)\varphi + O(h^2)$.  Thus, to first order, we may fix 
the time for the ODE step and randomise the time for the CIR step, or vice versa, 
as long as the times agree in expectation.

\paragraph{The clamped Euler map and the ULA kernel}
The ULA proposal coincides with the Metropolis--Hastings proposal of \cref{alg:mcmc}: apply a \emph{clamped Euler map} $\phi_h$ to obtain the starting point, then draw from the sticky CIR resolvent kernel.  Throughout, a clamped Euler map is any $\phi_h\colon[0,\infty)\to[0,\infty)$ that coincides with the Euler step away from the $O(h)$ boundary layer and holds the atom,
\begin{equation}\label{eq:clamped-general}
  \phi_h(x)=x-hG'(x)\quad (x\ge L_h),\qquad \phi_h(0)=0,\qquad L_h\coloneqq h\,\snorm{G'}_\infty .
\end{equation}
Its behaviour on the layer $[0,L_h]$ is otherwise unconstrained; since two such maps differ only on this $O(h)$ set, the bias analysis is insensitive to the choice (\cref{rem:scope}).  Routing the layer to the boundary as in \eqref{eq:transport-ext} is one choice; the analysis below fixes a $C^1$ representative \eqref{eq:clamped-kernel}, while the experiments use the hard clamp $\phi_h(x)=\max(x-hG'(x),0)$ of \cref{alg:ula}.  Removing the acceptance step gives the ULA transition: starting from $u_k = x \ge 0$, set
\[
  u_{k+1} \sim P_{\phi_h(x)}^{G=0}(u_{T_\alpha} \in \cdot)
\]
(\cref{alg:sticky-cir}, Case~1 if $\phi_h(x) > 0$ and Case~2 if $\phi_h(x) = 0$). By~\cref{eq:trans} or equivalently \cref{eq:proposal-q}, the transition kernel is
\begin{equation}\label{eq:ula-kernel}
  K_h(x, dy)
  = \alpha\,G^{\rm sticky}_\alpha(\phi_h(x),y)\,m'(y)\,dy
  + w_0(\phi_h(x))\,\delta_0(dy), \qquad x \geq 0,
\end{equation}
where $w_0(s) = -c_\mu\,U(a,b,z_s)$ is the probability of landing at $0$ from starting point $s$ (\cref{eq:w0}).

\paragraph{Stationary distribution and bias}
The ULA kernel $K_h$ does not satisfy detailed balance with respect
to~$\pi$, so its stationary distribution $\pi_h \neq \pi$ in general.  We bound the resulting bias in two stages: a total-variation bound $\|\pi_h-\pi\|_{\mathrm{TV}}\le Ch^{\,\delta-1}$ by a consistency--stability argument (\cref{thm:ula-bias}), sharpened to a first-order expansion of the bias with an explicit, observable-independent constant, which gives $\|\pi_h-\pi\|_{\mathrm{TV}}\asymp h\abs{\log h }$, a rate independent of $\delta$ (\cref{thm:sharp-rate}).  
Throughout the analysis, $C$ denotes a positive constant whose value may change from line to line, depending only on the model parameters $\lambda, \beta, \delta, \mu$ and on $G$ (via $\|G^{(j)}\|_\infty$ for the relevant $j$); bounds involving $C$ are understood to hold uniformly for $h \in (0, h_0]$  and over test functions (usually denoted $\varphi$).

We first show uniqueness of $\pi_h$ under the following moment bound.

\begin{assumption}\label[assumption]{ass:moment}
  There exist $M < \infty$ and $h_0 > 0$ such that the ULA chain
  $(u_n)$ satisfies
  \[
    \sup_{h\le h_0} \limsup_{n\to\infty} \mathbb{E}\bigl[u_n^2\bigr]
     \le  M,
  \]
  for all initial conditions $u_0 \ge 0$.
\end{assumption}
This assumption is verified under a one-sided Lipschitz condition on $G$ in \cref{lem:lyapunov}.

Denote by $C^k_{\mathrm b}([0,\infty))$ the space of $k$-times continuously
differentiable functions on $[0,\infty)$ with bounded derivatives:
\(
  C^k_{\mathrm b}([0,\infty)) = \{f\in C^k([0,\infty)) \colon
  \|f^{(j)}\|_\infty < \infty \text{ for } j=0,1,\ldots,k\}
\) and $\norm{f}_{C^k_{\mathrm b}} = \max_{j=0,\ldots,k} \|f^{(j)}\|_\infty$.
\begin{theorem}[existence and regularity of the ULA stationary distribution]\label{thm:ula-exists} Suppose that $G' \in C^1_{\mathrm b}([0,\infty))$ and that \cref{ass:moment} holds.

  \begin{enumerate}[label=\textup{(\roman*)}]
    \item \textup{(existence and uniqueness)}
      For every $h > 0$ sufficiently small, $K_h$ is irreducible and has a unique stationary probability measure $\pi_h$.

    \item \textup{(smooth density on the interior)}
      The absolutely continuous part of $\pi_h$ on $(0,\infty)$ admits a density $\rho_h \in C^1((0,\infty))$ with respect to Lebesgue measure.  Equivalently, the relative density $w_h \coloneqq d\pi_h/d\pi_0$ exists and lies in $C^1((0,\infty))$.
  \end{enumerate}
\end{theorem}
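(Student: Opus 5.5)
For existence I would combine tightness with the Feller property of $K_h$. By \cref{ass:moment}, the Cesàro averages $\frac1n\sum_{k<n}\delta_{u_0}K_h^k$ have uniformly bounded second moments, so by Markov's inequality they are tight on $[0,\infty)$. Any weak limit point is then stationary, provided $K_h$ maps $C_{\mathrm b}([0,\infty))$ into itself. This Feller property holds for two reasons. First, $\phi_h$ is continuous, since it is the clamp $\max(x-hG'(x),0)$ or the $C^1$ representative with $G'\in C^1_{\mathrm b}$. Second, the map $s\mapsto P^{G=0}_s(u_{T_\alpha}\in\cdot)$ is weakly continuous: by \cref{thm:transition-dist} this kernel is $\alpha G^{\rm sticky}_\alpha(s,y)m'(y)\,dy+w_0(s)\delta_0$. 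The Green's function is continuous in $s$ and dominated by $C\,U(a,b,z_y)\sup_{s\le S}f_0^{\rm sticky}(s)$, which is integrable against $m'$ by \eqref{eq:U-integral}. Dominated convergence then gives continuity, and this yields the Krylov--Bogoliubov existence of $\pi_h$.

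For uniqueness I would prove $\psi$-irreducibility together with a positive-probability return to the atom $\{0\}$, which then acts as an accessible atom. From any $x$, the kernel puts mass $w_0(\phi_h(x))=-c_\mu U(a,b,z_{\phi_h(x)})>0$ on $0$, because $c_\mu<0$ and $U>0$. So $\{0\}$ is reached in one step with positive probability. From $0$, the kernel is $(1-p_{\rm leave})\delta_0+p_{\rm leave}\nu_\alpha$, and $\nu_\alpha$ has a strictly positive density on $(0,\infty)$. Hence the chain is $\varphi$-irreducible with respect to $\pi_0$, whose null sets coincide with those of $m$. An accessible atom gives uniqueness of the invariant probability by standard atom theory (Meyn--Tweedie, Theorem~10.0.1, as in \cref{thm-ergodicity}), and aperiodicity is automatic because of the self-loop at $0$. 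One point to check carefully is the phrase "$h$ sufficiently small": I expect it to be needed only so that \cref{ass:moment} applies ($h\le h_0$) and so that $\phi_h$ is well defined with values in $[0,\infty)$.

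For (ii) I would write stationarity explicitly. For $y>0$,
\[
  \rho_h(y)=\alpha\,m'(y)\int_{[0,\infty)}G^{\rm sticky}_\alpha(\phi_h(x),y)\,\pi_h(dx),
\]
so $w_h(y)=\alpha\int G^{\rm sticky}_\alpha(\phi_h(x),y)\,\pi_h(dx)$ relative to $m_0'$. I would split the integral according to whether $\phi_h(x)<y$ or $\phi_h(x)>y$:
\[
  w_h(y)=\frac{\alpha}{|\mathcal W|}\Bigl[U(a,b,z_y)\int_{\{\phi_h(x)<y\}}f_0^{\rm sticky}(\phi_h(x))\,\pi_h(dx)+f_0^{\rm sticky}(y)\int_{\{\phi_h(x)\ge y\}}U(a,b,z_{\phi_h(x)})\,\pi_h(dx)\Bigr].
\]
The functions $U(a,b,z_y)$ and $f_0^{\rm sticky}(y)$ are smooth on $(0,\infty)$. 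The two integrals are monotone in $y$ and have bounded, locally uniform integrands (using the moment bound together with $U\le U_0$ and $f_0^{\rm sticky}$ finite on compacts). This gives continuity of $w_h$, provided the push-forward $\pi_h\circ\phi_h^{-1}$ has no atoms in $(0,\infty)$.

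The main obstacle is $C^1$ regularity. Differentiating the two integrals with respect to $y$ produces boundary terms $\pm U(a,b,z_y)f_0^{\rm sticky}(y)\,\pi_h(\phi_h\in dy)/dy$, and these cancel exactly, which is the usual Green's-function kink cancellation. After cancellation,
\[
  w_h'(y)=\frac{\alpha}{|\mathcal W|}\Bigl[\partial_yU(a,b,z_y)\int_{\{\phi_h<y\}}f_0^{\rm sticky}(\phi_h)\,d\pi_h+\partial_yf_0^{\rm sticky}(y)\int_{\{\phi_h\ge y\}}U(a,b,z_{\phi_h})\,d\pi_h\Bigr],
\]
which is continuous provided $\pi_h\circ\phi_h^{-1}$ has no atoms in $(0,\infty)$. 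I would establish this non-atomicity separately. The measure $\pi_h$ restricted to $(0,\infty)$ is absolutely continuous, since it equals $\pi_hK_h$ restricted there. The push-forward $\phi_h$ is then a $C^1$ map whose derivative $1-hG''$ is positive for small $h$ on $[L_h,\infty)$, so it is locally invertible. The only possible atom comes from $\pi_h(\{0\})$, which maps to $0$, the excluded point. On the layer $[0,L_h]$ I would rely on the $C^1$ representative being strictly monotone; this is where the smallness of $h$ enters a second time.
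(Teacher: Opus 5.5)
Your proposal is correct, but it argues differently from the paper in both parts.

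\textbf{Part (i).} The paper gets irreducibility from the fact that, off $\mathcal R_h$, the Euler map is a $C^1$ diffeomorphism once $h<\|G''\|_\infty^{-1}$, so $K_h(x,\cdot)$ inherits irreducibility from the sticky resolvent. It then uses tightness plus Prokhorov for existence, and irreducibility for uniqueness. You instead use $K_h(x,\{0\})=w_0(\phi_h(x))>0$ for every $x$. This makes $\{0\}$ an atom reachable in one step, and the positive density of $\nu_\alpha$ then gives $\pi_0$-irreducibility in two steps. Your route is more elementary and needs no smallness of $h$ for irreducibility or uniqueness. You also verify the Feller property of $K_h$, which is needed to identify a Prokhorov limit point as stationary and which the paper leaves implicit.

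\textbf{Part (ii).} The paper differentiates under the integral using uniform local bounds on $\partial_vG^{\rm sticky}_\alpha(s,v)$, and asserts that $G^{\rm sticky}_\alpha(s,\cdot)$ is smooth on $(0,\infty)$. In fact it has a kink at $v=s$, and such bounds alone give only local Lipschitz continuity of $\rho_h$. Your splitting at $\{\phi_h<y\}$ shows that the kink terms cancel. It also isolates the genuine requirement: $\pi_h\circ\phi_h^{-1}$ must have no atoms in $(0,\infty)$. (Continuity of $w_h$ alone does not need this, since $G^{\rm sticky}_\alpha$ is continuous across the diagonal; only the $C^1$ claim does.) So your version closes a step the paper passes over.

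\textbf{One correction to your last step.} The $C^1$ representative $\chi_h(x)(x-hG'(x))$ is not monotone on the layer when $G'(0)>0$: it dips below zero near the origin. So you should not appeal to strict monotonicity there. What non-atomicity actually needs is injectivity of $\phi_h$ on $\{\phi_h>0\}$. This holds for $h\|G''\|_\infty<1$ and a nondecreasing cutoff such as $\sin^2(\pi x/2L_h)$, because on that set both factors are positive and increasing. For the routing clamp $\max(x-hG'(x),0)$, which is the $\phi_h$ the paper's proof of this theorem uses, injectivity is immediate from the strict monotonicity of $x-hG'(x)$.
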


\begin{proof}
\textup{(i)} On $\mathcal{R}_h^c$, $\phi_h$ reduces to the smooth Euler step $x \mapsto x - G'(x)h$, which is a $C^1$ diffeomorphism of $\mathcal{R}_h^c$ onto its image for $h < \|G''\|_\infty^{-1}$. Therefore, $K_h(x,\cdot) = P_{\phi_h(x)}^{G=0}(u_{T_\alpha}\in\cdot)$ inherits the irreducibility of the sticky CIR resolvent. For $x \in \{0\}\cup\mathcal{R}_h$, $\phi_h(x) = 0$ and $K_h(x,\cdot) = P_0^{G=0}(u_{T_\alpha}\in\cdot)$ is the boundary kernel, which is also irreducible.  Under \cref{ass:moment}, the second moment is bounded uniformly in $h$, giving tightness and existence of $\pi_h$ by Prokhorov's theorem; uniqueness follows from irreducibility and the density of the resolvent kernel on $(0,\infty)$.

\textup{(ii)} From the explicit form of the proposal kernel~\eqref{eq:proposal-q}, $K_h(x,dv) = \alpha\,G^{\rm sticky}_\alpha(\phi_h(x), v)\,m'(v)\,dv$ on $v > 0$, with $G^{\rm sticky}_\alpha(s,\cdot)$ smooth on $(0,\infty)$ for each fixed $s \ge 0$ via \cref{thm:transition-dist}.  Stationarity $\pi_h = \pi_h K_h$ then expresses the absolutely continuous part of $\pi_h$ on $(0,\infty)$ as
\[
  \rho_h(v)  =  m'(v)\,\alpha\int_{[0,\infty)} G^{\rm sticky}_\alpha(\phi_h(x), v)\,\pi_h(dx).
\]
The Kummer-function representation of $G^{\rm sticky}_\alpha$ from \cref{thm:transition-dist} gives uniform-in-$s$ local bounds on $\partial_v G^{\rm sticky}_\alpha(s,v)$ for $v$ in compact subsets of $(0,\infty)$, so differentiation under the integral applies and $\rho_h \in C^1((0,\infty))$.  Since $\rho_0 = m'/Z_0$ is $C^\infty$ and strictly positive on $(0,\infty)$, $w_h = \rho_h/\rho_0 \in C^1((0,\infty))$.
\end{proof}

Before the bias bound, we record the finite-time weak convergence of the ULA scheme to the sticky CIR diffusion.

\begin{theorem}[finite-time weak convergence of the ULA scheme]\label{thm:ula-weak-conv}
  Suppose $G\in C^2([0,\infty))$ with $\|G'\|_\infty<\infty$, and \cref{ass:moment} holds.  Then, as $h\to 0$, for every $\varphi\in C_{\mathrm b}([0,\infty))$ and every $t>0$,
  \[
    \mathbb{E}[\varphi(X_{n_h})\mid X_0=x]
      \;\to\;
      \mathbb{E}[\varphi(u_t)\mid u_0=x],
      \qquad n_h \coloneqq  \lfloor t/h\rfloor,
  \]
  uniformly for $x$ in compact subsets of $[0,\infty)$.
\end{theorem}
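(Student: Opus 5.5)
The plan is the standard Trotter--Kato/Chernoff route: show that the one-step operator $K_h$ is consistent with the generator $\mathcal L$ of \cref{eq-CIR-potential} on a core. Write $K_h\varphi = R_\alpha E_h\varphi$ with $\alpha=1/h$, where $R_\alpha=\alpha(\alpha-\mathcal L_1)^{-1}$ is the sticky potential-free resolvent (\cref{thm:transition-dist}) and $E_h\varphi=\varphi\circ\phi_h$ is composition with the clamped Euler map. Chernoff's product formula, in the Feller-semigroup form of Ethier--Kurtz (Ch.~1, Thm.~6.5, together with Ch.~4, Thm.~2.11 for the Markov chain version), then gives the conclusion. The hypotheses to check are:
\begin{itemize}
\item $K_h$ is a positive contraction on $C_0([0,\infty))$;
\item $h^{-1}(K_h-I)\varphi\to\mathcal L\varphi$ uniformly for $\varphi$ in a core $D$ of the Feller generator $\mathcal L$ from \cref{thm-exist-potential}.
\end{itemize}
The convergence $K_h^{\lfloor t/h\rfloor}\varphi\to P_t\varphi$ in sup norm is then automatic for $\varphi\in C_0$, and the extension to $C_{\mathrm b}$ is a tightness step.

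For the core $D$, I would take functions $\varphi\in C_0$ that satisfy three conditions:
\begin{itemize}
\item $\varphi$ is smooth in the interior with compact support in $[0,\infty)$;
\item $\mathcal L_1\varphi$ is continuous up to $0$;
\item $\varphi$ satisfies the Wentzell condition $\frac{d\varphi}{ds_0}(0^+)=\frac1\mu\mathcal L_1\varphi(0)$ in the first-argument form of \cref{lem:resolvent-bc}, i.e.\ the sticky condition for the generator.
\end{itemize}
By the It\^o--McKean description of the domain this is a core for $\mathcal L_1$. Because the tilt only adds the bounded first-order term $-G'\varphi'$, which is relatively bounded, it is also a core for $\mathcal L=\mathcal L_1+\mathcal L_2$. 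Note that $\mathcal L_2\varphi(0)$ must be interpreted consistently with the sticky condition; the potential term acts only on the interior.

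The consistency estimate comes from splitting
\[
h^{-1}(K_h-I)\varphi=h^{-1}(R_\alpha-I)E_h\varphi+h^{-1}(E_h-I)\varphi .
\]
For the first term I use the resolvent identity $h^{-1}(R_\alpha-I)=R_\alpha\mathcal L_1$ on $\mathrm{Dom}(\mathcal L_1)$, together with $R_\alpha\psi\to\psi$ uniformly as $\alpha\to\infty$ for $\psi\in C_0$, which holds by strong continuity of the Feller semigroup. This gives convergence to $\mathcal L_1\varphi$ provided $\mathcal L_1(E_h\varphi)\to\mathcal L_1\varphi$ uniformly.

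For the second term, off the layer $[0,L_h]$ a Taylor expansion gives $h^{-1}(\varphi(x-hG'(x))-\varphi(x))\to -G'\varphi'$ uniformly, using $G\in C^2$ and $\|G'\|_\infty<\infty$. On the layer the two terms must be handled together, because $E_h\varphi$ need not lie in $\mathrm{Dom}(\mathcal L_1)$ there.

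\textbf{Main obstacle.} The difficulty is the boundary layer, and I see two ways to handle it.
\begin{itemize}
\item \emph{Smooth representative (try first).} Use the $C^1$ representative of the clamp referred to in the paper, and choose it so that $E_h$ maps $D$ into $\mathrm{Dom}(\mathcal L_1)$.
\item \emph{Direct estimate (fallback).} For hard clamping, bound $\|R_\alpha(E_h\varphi-\varphi\circ\Phi_h)\|_\infty$ directly. The difference is supported on $[0,L_h]$ and of size $O(h)$ there, so after dividing by $h$ it is $O(1)$ pointwise. I therefore need $R_\alpha\mathbf 1_{[0,L_h]}=o(1)$ uniformly. This should follow from the explicit kernel: $\alpha\int_0^{L_h}G^{\rm sticky}_\alpha(x,y)m'(y)\,dy\le C\,\alpha\,U_0 L_h^{\delta}/|\mathcal W|$. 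Since $|\mathcal W|\sim\alpha^{b}$ as $a\to\infty$, this is $O(h^{\delta-1-\delta/2+\dots})$, and the exponents need to be checked with the large-$a$ asymptotics of $U$.
\end{itemize}
A secondary subtlety is the atom: $\varphi(\phi_h(0))=\varphi(0)$ exactly, so at $x=0$ only the $R_\alpha$ part acts, and the Wentzell condition matches the limit $\mathcal L\varphi(0)$.

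To pass from $C_0$ to $C_{\mathrm b}$, \cref{ass:moment} (uniform second moments) together with Markov's inequality gives tightness of $X_{n_h}$, uniformly for $x$ in compacts. Truncating $\varphi$ by a cutoff $\chi_R$ then finishes the argument, and uniformity in $x$ on compacts follows from the sup-norm convergence.
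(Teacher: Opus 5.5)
Your route (Chernoff/Trotter--Kato in sup norm) differs from the paper's, but as set up it has a genuine gap at the boundary, and what breaks it is the cusp rather than the clamp. The false step is ``the tilt only adds the bounded first-order term $-G'\varphi'$''.

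For $\varphi$ in the domain of either sticky generator, $\varphi'=s_0'\,\frac{d\varphi}{ds_0}$, where $s_0'(x)\sim x^{1-\delta}$ and $\frac{d\varphi}{ds_0}(0^+)$ equals $\mu^{-1}$ times the generator at $0$. Hence $\varphi'(x)\sim \frac{d\varphi}{ds_0}(0^+)\,x^{1-\delta}$ is unbounded whenever the boundary flux is nonzero; this is the $x^{2-\delta}$ cusp of \cref{lem:resolvent-regularity}. When $G'(0)\neq0$ this has two consequences.
\begin{enumerate}
\item For $\varphi\in D$ with nonzero flux, $\mathcal L\varphi=\mathcal L_1\varphi-G'\varphi'$ blows up like $x^{1-\delta}$. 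So $D\cap\mathrm{Dom}(\mathcal L)$ contains only zero-flux functions. These satisfy $\mathcal L\varphi(0)=0$, a condition preserved under graph-norm limits, so $D$ is not a core for $\mathcal L$.
\item Every core of $\mathcal L$ contains functions with nonzero flux. For those, the piece $h^{-1}(E_h-I)\varphi\approx -G'\varphi'$ of your split is of size $h^{1-\delta}$ at $x\asymp h$, so it cannot converge uniformly. The two pieces therefore cannot be treated separately.
\end{enumerate}
The full one-step generator fails as well. For example, when $G'(0)>0$, on the routing layer
\[
h^{-1}(K_h-I)\varphi(x)=h^{-1}\bigl(R_\alpha\varphi(0)-\varphi(0)\bigr)+h^{-1}\bigl(\varphi(0)-\varphi(x)\bigr),
\]
where the first term is bounded and the second is of exact order $h^{1-\delta}$. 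This is also where your fallback slips: on $[0,L_h]$ the clamp moves $\varphi$ by order $h^{2-\delta}$, not $O(h)$. So $\|h^{-1}(K_h-I)\varphi-\mathcal L\varphi\|_\infty\to\infty$, and the Chernoff hypothesis fails with $f_h=\varphi$ on every core.

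The composition order is also reversed. The chain applies the Euler map first, so $K_h\varphi=(R_\alpha\varphi)\circ\phi_h$, i.e.\ $K_h=E_hR_\alpha$, not $R_\alpha E_h$.

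The paper avoids sup-norm consistency altogether. It uses the martingale-problem/diffusion-approximation theorem (Ethier--Kurtz, Thm.~4.8.2), which only needs the generator defect to vanish along the chain's path. Concretely:
\begin{itemize}
\item convergence is uniform on compacts of $(0,\infty)$;
\item at the atom the defect converges because the tilt leaves the boundary generator unchanged, $\mathcal L\varphi(0)=\mathcal L_1\varphi(0)$;
\item the $O(h^{1-\delta})$ layer excess contributes only $O(th)$, because the chain makes $O(th^{\delta-1})$ visits to the layer in $n_h$ steps.
\end{itemize}

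If you want to keep the semigroup route, you need a decomposition that never separates $\mathcal L_1$ from $-G'\partial_x$. One way is the following.
\begin{enumerate}
\item Work with $R_\alpha E_h$. This is legitimate because $K_h^n=E_h(R_\alpha E_h)^{n-1}R_\alpha$ and $E_h,R_\alpha\to I$ strongly.
\item Take $\varphi\in\mathrm{Dom}(\mathcal L)$ and use $h^{-1}(R_\alpha-I)\varphi=R_\alpha\bigl(\mathcal L\varphi+G'\varphi'\mathbf 1_{(0,\infty)}\bigr)$, which holds because the boundary generators agree.
\item Write $h^{-1}(R_\alpha E_h-I)\varphi=R_\alpha\mathcal L\varphi+R_\alpha\Psi^\varphi_h$, where $\Psi^\varphi_h=h^{-1}(\varphi\circ\phi_h-\varphi)+G'\varphi'$ is the Euler truncation error of \eqref{eq:Psi-def} with $\varphi$ in place of $g_\star$.
\item Prove $\|R_\alpha\Psi^\varphi_h\|_\infty\to0$ from kernel bounds: the resolvent integrates the $x^{1-\delta}$ and $h^{1-\delta}$ singularities against $m'\sim x^{\delta-1}$.
\end{enumerate}
This mechanism is closely related to the paper's preconditioned identity \eqref{eq:precond-identity}. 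Your proposal contains neither this split nor the kernel estimate.
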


\begin{proof}
  Write $\mathcal{A}_h^K=h^{-1}(K_h-I)$ for the discrete generator and
  $T(t)\varphi(x)=\mathbb{E}[\varphi(u_t)\mid u_0=x]$ for the sticky CIR
  semigroup, a strongly continuous contraction semigroup on $C_0([0,\infty))$
  (\cref{thm-exist-potential}), with generator $\mathcal{L}$ characterised by the Wentzell
  boundary condition \cref{eq-wentzell}.
  Two boundary facts are used.  First, by \cref{lem:resolvent-regularity}, a
  function $\varphi\in\mathrm{dom}(\mathcal{L})$ has a finite scale-flux
  $\tfrac{d\varphi}{ds}(0^+)$. For the second, observe that the Girsanov tilt by $G$ (\cref{thm-exist-potential}) adds only interior
  drift, leaving the boundary stickiness $m(\{0\})=1/\mu$ intact, while the
  scale density obeys $s_G'(x)=s_0'(x)\,e^{\beta(G(x)-G(0))}$ with factor
  $\to1$ as $x\to0^+$, so the flux at $0$ is unchanged. Thus,  the potential leaves the sticky
  boundary generator unchanged and
  \begin{equation}\label{eq:bdy-generator-match}
    \mathcal{L}\varphi(0)=\mathcal{L}_1\varphi(0),
    \qquad\varphi\in\mathrm{dom}(\mathcal{L}).
  \end{equation}

  \emph{Generator convergence.}  Away from the boundary $\phi_h(x)=x-hG'(x)$ and
  Taylor expansion gives
  $K_h\varphi(x)=(R_h\varphi)(\phi_h(x))=\varphi(x)+h\mathcal{L}\varphi(x)+o(h)$,
  so $\mathcal{A}_h^K\varphi\to\mathcal{L}\varphi$ uniformly on compact subsets
  of $(0,\infty)$.  At the atom, the held step gives
  $\mathcal{A}_h^K\varphi(0)=h^{-1}(R_h\varphi(0)-\varphi(0))$, which converges
  to the $G=0$ boundary generator $\mu\,\tfrac{d\varphi}{ds}(0^+)
  =\mathcal{L}_1\varphi(0)$ and hence to
  $\mathcal{L}\varphi(0)$ by \eqref{eq:bdy-generator-match}.  The only region of
  non-uniformity is the layer $\mathcal{R}_h=\{0<x\le hG'(x)\}$ (nonempty only
  if $G'(0)>0$), where the routed step gives
  $\mathcal{A}_h^K\varphi(x)=h^{-1}((R_h\varphi)(0)-\varphi(x))$ and the cusp
  makes $\varphi(0)-\varphi(x)=\Theta(x^{2-\delta})=\Theta(h^{2-\delta})$, so
  $\mathcal{A}_h^K\varphi=O(h^{1-\delta})$ on $\mathcal{R}_h$.  This excess is
  confined to a set entered with probability $O(h^\delta)$ in a single step,
  uniformly in the starting point (\cref{lem:density-bound}).

  The chain $(X_n)$ solves the martingale problem for
  $\mathcal{A}_h^K$; \cref{ass:moment} gives tightness, hence a convergent
  subsequence.  The additive functional satisfies
  $\sum_{k<n_h}h\,\mathcal{A}_h^K\varphi(X_k)\to\int_0^t\mathcal{L}\varphi(u_s)\,ds$:
  off $\mathcal{R}_h$ this is the Riemann-sum limit just established, while the
  layer contributes at most
  $(\#\text{visits})\times h\,O(h^{1-\delta})
   =O(t\,h^{\delta-1})\times O(h^{2-\delta})=O(th)\to0$, the visit count
  following from the per-step entry probability $O(h^\delta)$
  (\cref{lem:density-bound}).  The discrete generators converge, $\mathcal{A}_h^K\varphi\to\mathcal{L}\varphi$ boundedly and pointwise on the core $\mathrm{dom}(\mathcal{L})$ (uniformly off $\mathcal{R}_h$, the layer contributing $O(th)\to0$ as above); with the tightness from \cref{ass:moment}, the diffusion-approximation theorem \citep[Thm.~4.8.2]{EthierKurtz1986} identifies every limit point as the unique solution of the well-posed
  $\mathcal{L}$-martingale problem \citep[\S4]{ItoMcKean1965}, giving
  $\mathbb{E}[\varphi(X_{n_h})\mid X_0=x]\to\mathbb{E}[\varphi(u_t)\mid u_0=x]$
  for $\varphi\in C_0$, $n_h=\lfloor t/h\rfloor$; \cref{ass:moment} makes the
  laws tight, upgrading to $\varphi\in C_{\mathrm b}$
  \citep[Theorem~25.10]{Billingsley1999}.  The convergence is uniform for $x$ in any
  compact $K\subset[0,\infty)$: the finite-time second moment is bounded uniformly in the
  starting point, $\mathbb{E}_x[X_{n_h}^2]\le x^2+b/\gamma$ by \eqref{eq:lyapunov-geom}, so the
  laws $\{X_{n_h}\mid X_0=x\}_{h\le h_0,\,x\in K}$ are tight; for $x_h\to x_\star$ in $K$, the
  generator and layer estimates above being uniform in the starting point, every weak limit of
  $(X_{n_h}\mid X_0=x_h)$ solves the $\mathcal{L}$-martingale problem from $x_\star$ and so equals
  the law of $u_t$ from $x_\star$, while
  $\mathbb{E}[\varphi(u_t)\mid u_0=x_h]\to\mathbb{E}[\varphi(u_t)\mid u_0=x_\star]$ by the Feller
  property; a subsequence argument gives the uniformity.
\end{proof}

The bias analysis follows the classical \emph{consistency--stability} form: a one-step weak error and a uniform geometric contraction of the chain.  Before proceeding with this analysis, we first make precise the clamping mechanism for the Euler map necessary for the proofs.

\paragraph{The analysis clamp}
The one-step weak error is governed by how the boundary atom is propagated.  For the analysis, we fix the clamped Euler map \eqref{eq:clamped-general} to the $C^1$ form
\begin{equation}\label{eq:clamped-kernel}
  K_h=R_h\circ\phi_h,\qquad
  \phi_h(x)=\chi_h(x)\,\bigl(x-hG'(x)\bigr),
\end{equation}
where $\chi_h\in C^1([0,\infty))$ is a cutoff with $\chi_h\equiv1$ on $[L_h,\infty)$ and $\chi_h(0)=0$ (e.g., $\chi_h(x)=\sin^2(\pi x/2L_h)$ on $[0,L_h]$); then $\phi_h$ is $C^1$, holds the atom ($\phi_h(0)=0$), and equals $x-hG'(x)$ for $x\ge L_h$.  The bias bound below uses only the atom-holding property $\phi_h(0)=0$; the $C^1$ regularity enters solely through the bounded clamp slope used in the contraction argument (\cref{app:contraction-nonconvex}).  Holding the atom at $0$, rather than transporting it into the resolvent kernel's boundary cusp (as the plain Euler step would for $G'(0)<0$), is what keeps the one-step weak error small for \emph{all} $G$.  

We are ready to present consistency \cref{lem:boundary-charge} and stability \cref{ass:contraction}.
Write $\nu_h\coloneqq \pi K_h-\pi$ for the signed one-step defect, with Lebesgue decomposition $\nu_h(dx)=\nu_h(\{0\})\,\delta_0(dx)+r(x)\,dx$ on $[0,\infty)$ and weighted norm $\|\nu_h\|_{V}=\int V\,d|\nu_h|=|\nu_h(\{0\})|+\int_0^\infty(1+x^2)|r(x)|\,dx$.

\begin{lemma}[consistency: one-step weighted defect]\label[lemma]{lem:boundary-charge}
  Assume $G'\in C^2_{\mathrm b}([0,\infty))$ and $\delta \in (1,2)$.  There exist $C_{\mathrm b}<\infty$ and $h_0>0$ such that, for all $h\in(0,h_0]$, the one-step defect of the clamped kernel \eqref{eq:clamped-kernel} satisfies
  \begin{equation}\label{eq:boundary-charge}
    \|\nu_h\|_{V}=|\nu_h(\{0\})|+\int_0^\infty(1+x^2)|r(x)|\,dx\le C_{\mathrm b}\,h^{\delta}.
  \end{equation}
\end{lemma}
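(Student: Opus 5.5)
Write $K_h = R_h\circ\phi_h$ with $R_h=\alpha(\alpha-\mathcal L_1)^{-1}$, $\alpha=1/h$, the sticky $G=0$ resolvent of \cref{thm:transition-dist}. The plan is to compare $\pi K_h$ with $\pi$ through the factorisation $\pi K_h-\pi = (\pi\circ\phi_h^{-1})R_h - \pi$.

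The key identity is that $\pi$ is invariant for $e^{t\mathcal L}$ and $\mathcal L=\mathcal L_1+\mathcal L_2$. Since $R_h$ leaves $\pi_0$ invariant, I would write $\pi=w\,\pi_0$ with $w=e^{-\beta G}/Z$ (\cref{thm-inv-potential}). Then the push-forward $\pi\circ\phi_h^{-1}$ has interior density
\[
\rho(\phi_h^{-1}(y))\,\bigl|(\phi_h^{-1})'(y)\bigr|
\]
away from the layer, and it keeps the atom $\pi(\{0\})$ unchanged because $\phi_h(0)=0$.

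Step 1 is the interior transport defect. Let $\rho=w\,m_0'/Z_0$, so $\rho(y)\sim y^{\delta-1}$ near $0$. For $y\ge L_h$, a Taylor expansion of the transported density gives
\[
\rho\circ\phi_h^{-1}\,|(\phi_h^{-1})'| = \rho + h\,(G'\rho)' + h^2 E_h,
\]
where $|E_h(y)|\le C y^{\delta-3}(1+y^2)^{-k}$ uses $G'\in C^2_{\mathrm b}$ and the Gaussian tail of $\rho$. Weighted integrability of $y^{\delta-3}$ fails at $0$, so I truncate: on $y\ge L_h\asymp h$ the remainder contributes $h^2\int_{L_h} y^{\delta-3}\,dy\asymp h^2 h^{\delta-2}=h^\delta$. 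On the layer $[0,L_h]$ I bound the mass directly. Both $\pi([0,L_h])$ and the pushed mass are $O(\int_0^{Ch}y^{\delta-1}\,dy)=O(h^\delta)$, since $\phi_h$ is a $C^1$ map with $\phi_h([0,L_h])\subset[0,Ch]$ and $\phi_h^{-1}$ of $[0,Ch]$ lies in $[0,C'h]$.

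Step 2 is the first-order cancellation. After Step 1, $\pi\circ\phi_h^{-1} = \pi + h\,\mathcal L_2^*\pi + \varepsilon_h$ with $\|\varepsilon_h\|_V\le Ch^\delta$. Applying $R_h$, which is a contraction in total variation and preserves second moments up to $C(1+x^2)$ (a Lyapunov bound, \cref{lem:lyapunov}), gives
\[
\pi K_h-\pi = (\pi R_h-\pi) + h\,(\mathcal L_2^*\pi) R_h + \varepsilon_h R_h .
\]
Here $\pi R_h - \pi = h(\mathcal L_1^*\pi) R_h$, using the resolvent identity $R_h - I = h\mathcal L_1 R_h$ in dual form, with $\mathcal L_1^*\pi$ understood as a distribution including the Wentzell flux at $0$. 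Summing, the defect becomes
\[
h\,(\mathcal L_1^*+\mathcal L_2^*)\pi\,R_h + \varepsilon_h R_h = h(\mathcal L^*\pi)R_h+\varepsilon_h R_h=\varepsilon_h R_h,
\]
because $\mathcal L^*\pi=0$. The obstacle is that $\mathcal L_1^*\pi$ and $\mathcal L_2^*\pi$ are individually singular near $0$: $\mathcal L_2^*\pi=(G'\rho)'\sim y^{\delta-2}$ is only integrable, and the Wentzell condition must balance the flux $G'(0)\rho(0^+)$. Since $\rho(0^+)=0$ for $\delta>1$, however, the transport flux into the atom vanishes. This is exactly why holding the atom, with $\phi_h(0)=0$, is consistent.

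To make this rigorous, I would avoid operator identities on singular measures. Instead I would test against $\varphi$ with $|\varphi|\le 1+x^2$ and write
\[
\nu_h(\varphi)=\pi\bigl(R_h\varphi\circ\phi_h - R_h\varphi\bigr).
\]
Setting $\psi=R_h\varphi$, which lies in $\mathrm{dom}(\mathcal L_1)$ with finite scale flux (\cref{lem:resolvent-regularity}), I would expand
\[
\psi(\phi_h(x))-\psi(x)= -hG'(x)\psi'(x)+O\bigl(h^2\,|\psi''(\xi)|\bigr)\quad\text{off the layer.}
\]
I would then use $\pi\bigl(h\mathcal L_2\psi\bigr) = -\pi\bigl(h\mathcal L_1\psi\bigr) = -\pi(\psi-\varphi)$ to cancel the first-order terms. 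This uses stationarity $\pi(\mathcal L\psi)=0$, which holds because $\psi\in\mathrm{dom}(\mathcal L)$ via \eqref{eq:bdy-generator-match}.

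The main estimate, and the step I expect to be hardest, is then the bound
\[
h^2\int_{L_h}^\infty |\psi''|\,d\pi \;+\; \pi([0,L_h])\,\sup|\psi(\phi_h)-\psi| \;\le\; C h^\delta.
\]
It needs the cusp bound $|\psi''(x)|\le C x^{-\delta}\|\varphi\|_V$ near $0$, uniform in $h$, which comes from $\psi'= s_0'\cdot O(\text{flux})\sim x^{1-\delta}$. Given that bound, $h^2\int_{L_h} x^{-\delta}x^{\delta-1}\,dx=O(h^2\abs{\log h})\le Ch^\delta$, and the layer term is $O(h^\delta)$. Because $\varphi$ ranges over the weighted unit ball, this yields the $V$-norm bound \eqref{eq:boundary-charge} by duality, with the atom part included automatically.
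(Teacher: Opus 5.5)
\textbf{There is a genuine gap, but it is in the argument you adopted, not in the one you set aside.} Your Steps~1--2 already contain a workable proof. The test-function argument you replace them with fails at exactly the step you flag as hardest.

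\emph{Where it fails.} The estimate needs, uniformly in $h$, $\lvert\psi''(x)\rvert\le Cx^{-\delta}\|\varphi\|_V$ near $0$ and a weighted $O(\|\varphi\|_V)$ bound away from $0$. This is false. The norm $\|\nu_h\|_V$ is a supremum over \emph{all} measurable $\lvert\varphi\rvert\le V$, and $\psi=R_h\varphi$ satisfies
\[
\psi''=\beta\bigl[(\psi-\varphi)/h-b_0\psi'\bigr],\qquad b_0(x)=\tfrac{\delta-1}{\beta x}-\lambda x .
\]
So $\psi''$ has size $h^{-1}$ wherever $\varphi$ varies on the diffusive scale $\sqrt h$. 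For example, $\varphi(x)=\cos(x/\sqrt h)\,\mathbf 1_{[1,2]}(x)$ gives $\psi\approx\tfrac{\beta}{1+\beta}\varphi$ inside $[1,2]$. The quantity $h^2\int_{L_h}^\infty\lvert\psi''\rvert\,d\pi$ you propose to bound is then of order $h$, far above $h^\delta$.

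The flux behind your cusp heuristic is not $O(\|\varphi\|_V)$ either. By \eqref{eq:resolvent-flux} with $\alpha=1/h$, $(d\psi/ds)(0^+)=(\mu h)^{-1}\bigl(\psi(0)-\varphi(0)\bigr)$, which is generically of order $p_{\rm leave}/h\asymp h^{\delta/2-1}$. The second-order term $\tfrac12h^2\int(G')^2\psi''\rho\,dx$ is small only through cancellation. Bounding it with $\lvert\psi''\rvert$ therefore cannot beat $O(h)$ uniformly over the unit ball. Separately, your displayed formula for $\nu_h(\varphi)$ drops the term $\pi(\psi-\varphi)$, which you then use in the next line.

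\emph{How to repair it.} Use your own Step~1, paired with the test function the other way round, so that no derivative of $\psi$ ever enters an estimate. Set $\varepsilon_h:=\pi\circ\phi_h^{-1}-\pi-h\,(G'\rho)'\,dx$. Then
\[
  \nu_h(\varphi)=\int\psi\,d\varepsilon_h+\Bigl[\pi(\psi-\varphi)+h\int_0^\infty(G'\rho)'\,\psi\,dx\Bigr],
\]
and the bracket vanishes identically:
\begin{itemize}
\item $\psi-\varphi=h\mathcal L_1\psi$ on $(0,\infty)$, and $\psi(0)-\varphi(0)=h\mu\,(d\psi/ds)(0^+)$; the latter follows from the Green representation for any measurable $\lvert\varphi\rvert\le V$.
\item $\int_0^\infty(G'\rho)'\psi\,dx=-\int_0^\infty G'\psi'\rho\,dx$, since $\rho(0^+)=0$.
\item Hence the bracket equals $h\,\pi(\mathcal L\psi)$. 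With $\pi=m_G/Z$, one integration by parts gives $\int_0^\infty\rho\,\mathcal L\psi\,dx=-Z^{-1}(d\psi/ds_G)(0^+)$, which cancels the atom term $\pi(\{0\})\,\mu\,(d\psi/ds)(0^+)$.
\end{itemize}
Then $\lvert\int\psi\,d\varepsilon_h\rvert\le\sup_x\bigl(R_hV(x)/V(x)\bigr)\,\|\varepsilon_h\|_V\le C\|\varepsilon_h\|_V$ by \eqref{eq:resolvent-moment}. Your pushforward expansion gives $\|\varepsilon_h\|_V\le Ch^\delta$, provided you make two adjustments. Start the Taylor region at $y\ge Ch$, beyond the image of the clamp layer. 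Also include the layer term $h\int_0^{Ch}\lvert(G'\rho)'\rvert\,dx=O(h^\delta)$. No singular operator identity is involved, because $(G'\rho)'\sim y^{\delta-2}$ is an $L^1$ density.

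\emph{Comparison with the paper.} The paper splits $\|\nu_h\|_V$ into atom, far-field and near-boundary parts.
\begin{itemize}
\item Near the boundary it reaches your identity $\nu_h(\psi)=\pi\bigl(\chi\circ\phi_h-\chi+hG'\chi'\bigr)$, via $\pi_0R_h=\pi_0$ rather than stationarity. It sidesteps your gap by integrating the $h^2(G')^2\chi''\rho$ term by parts twice onto $(G')^2\rho$, so that only $\|\chi\|_\infty\le1$ is used. Its Taylor remainder does still invoke a pointwise bound on $\chi'''$.
\item The atom is handled by the explicit Kummer-integral cancellation $I_{\rm tr}+I_{\rm wt}=0$. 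This is the leading-order part of the $\varphi=\mathbf 1_{\{0\}}$ case of the same weak identity $\pi(\mathcal L\psi)=0$.
\item The far field is handled by citing Bally--Talay.
\end{itemize}
The density-level argument covers all three parts in one estimate, with uniformity in $\varphi$ built in. The paper's computation instead shows explicitly which boundary terms of order $h^{(\delta+1)/2}$ cancel.
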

\noindent
The proof (Appendix~\ref{app:smooth-bias}) is a boundary-layer estimate: the defect is dominated by the $O(h)$ layer at the atom, where two contributions of the same order $h^{(\delta+1)/2}$ cancel, leaving the net $O(h^\delta)$.

\begin{assumption}[contraction]\label[assumption]{ass:contraction}
  Let $V(x)=1+x^2$ (the Lyapunov function of \cref{ass:moment}) and $\|\mu\|_V\coloneqq \sup_{|f|\le V}|\mu(f)|=\int V\,d|\mu|$.  There exist $c>0$, $C<\infty$, and $h_0>0$ such that, for all $h\in(0,h_0]$, the chain $K_h$ has a unique stationary distribution $\pi_h$ and is $V$-uniformly geometrically ergodic, uniformly in $h$:
  \begin{equation}\label{eq:ass-contraction}
    \bigl\|K_h^n(x,\cdot)-\pi_h\bigr\|_{V}\le C\,V(x)\,(1-ch)^n,\qquad x\in[0,\infty),\ \ n\ge0 .
  \end{equation}
\end{assumption}
\noindent
The contraction rate is bounded away from $1$ by an amount of order $h$, uniformly in $h$: over one diffusion time, or $O(h^{-1})$ steps, the non-degenerate noise spreads the law of the chain onto a common interior set, while the second-moment drift of \cref{ass:moment} controls returns from infinity, a Foster--Lyapunov/Harris mechanism that requires no convexity (\cref{lem:contraction-nonconvex}).  The weight $V$ is needed because the Poisson solution below grows at infinity.

We join the consistency lemma and stability assumption in \cref{thm:ula-bias} using the solution to a Poisson equation.
\begin{lemma}[bound on the discrete Poisson solution]\label[lemma]{lem:poisson-sup}
  Suppose \cref{ass:contraction} holds.  For bounded measurable $\eta\colon[0,\infty)\to\mathbb R$, the Neumann series $g\coloneqq \sum_{n\ge0}K_h^n(\eta-\pi_h(\eta))$ converges and solves $(I-K_h)g=\eta-\pi_h(\eta)$, with
  \begin{equation}\label{eq:poisson-sup}
    |g(x)|\le C\,h^{-1}\,V(x)\,\snorm{\eta}_\infty,\qquad x\in[0,\infty),\ \ h\in(0,h_0].
  \end{equation}
\end{lemma}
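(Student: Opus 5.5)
The plan is to bound each term of the Neumann series directly by \cref{ass:contraction} and then sum the resulting geometric series. Write $\bar\eta\coloneqq\eta-\pi_h(\eta)$. Stationarity of $\pi_h$ gives $\pi_h(\bar\eta)=0$, so for every $n\ge0$
\[
  K_h^n\bar\eta(x)=\int\bar\eta(y)\,\bigl(K_h^n(x,dy)-\pi_h(dy)\bigr).
\]
The centred function is bounded by $\|\bar\eta\|_\infty\le 2\snorm{\eta}_\infty$. Since $V\ge1$, we have $|\bar\eta|\le 2\snorm{\eta}_\infty V$. The definition $\|\mu\|_V=\sup_{|f|\le V}|\mu(f)|$ together with \eqref{eq:ass-contraction} then yields
\[
  |K_h^n\bar\eta(x)|\le 2\snorm{\eta}_\infty\,\bigl\|K_h^n(x,\cdot)-\pi_h\bigr\|_V\le 2C\snorm{\eta}_\infty V(x)(1-ch)^n .
\]
Summing over $n$ gives absolute convergence of the series at every $x$, with
\[
  |g(x)|\le 2C\snorm{\eta}_\infty V(x)\sum_{n\ge0}(1-ch)^n=\frac{2C}{c}\,h^{-1}V(x)\snorm{\eta}_\infty ,
\]
which is \eqref{eq:poisson-sup}. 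We shrink $h_0$ if necessary so that $ch_0<1$ and the ratio lies in $(0,1)$.

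To show that $g$ solves the Poisson equation, the first point is that $K_hg$ is well defined and can be computed termwise. We need $K_h|g|(x)<\infty$, and this follows from $K_hV(x)<\infty$. The Lyapunov bound behind \cref{ass:moment}, namely \eqref{eq:lyapunov-geom}, gives $K_hV\le(1-\gamma h)V+bh$. Alternatively, the $n=1$ case of \eqref{eq:ass-contraction} together with $\pi_h(V)<\infty$ gives the same finiteness, since $\pi_h(V)<\infty$ is implicit in $\|K_h^n(x,\cdot)-\pi_h\|_V<\infty$. With this in hand, Fubini/dominated convergence applies, because the partial sums are dominated by $\tfrac{2C}{c}h^{-1}\snorm{\eta}_\infty V$. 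It follows that
\[
  K_hg=\sum_{n\ge1}K_h^n\bar\eta=g-\bar\eta ,
\]
which rearranges to $(I-K_h)g=\eta-\pi_h(\eta)$.

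There is no serious obstacle; the estimate is routine once \cref{ass:contraction} is available. The only points to watch are two. First, we use the $V$-weighted norm only to dominate a bounded test function, and this is legitimate because $V\ge1$. Second, the interchange of $K_h$ with the infinite sum requires $K_hV<\infty$, which is supplied as explained above. The factor $h^{-1}$ comes entirely from summing $(1-ch)^n$, reflecting that $O(h^{-1})$ steps are needed to mix.
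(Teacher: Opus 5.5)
Your proposal is correct and follows essentially the same route as the paper: bound each term $K_h^n\hat\eta(x)=\int\eta\,(K_h^n(x,\cdot)-\pi_h)$ by \cref{ass:contraction} using $V\ge1$, then sum the geometric series $\sum_n(1-ch)^n$ to obtain the $h^{-1}$ factor. Your extra justification for applying $K_h$ termwise is a welcome addition, since the paper omits it. Of your two justifications for $K_hV<\infty$, the second is the one to keep. The $n=0$ case of \eqref{eq:ass-contraction} gives $\pi_h(V)<\infty$, and then $n=1$ gives $K_hV(x)<\infty$. The first is not available here, because \cref{lem:lyapunov} actually reads $K_hV\le(1-\gamma)V+b$ with $\gamma\asymp h$, and it needs hypotheses on $G$ that are not assumed in this lemma.
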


\begin{proof}
  Write $\hat\eta\coloneqq \eta-\pi_h(\eta)$.  Since $(K_h^n\hat\eta)(x)=(K_h^n\eta)(x)-\pi_h(\eta)=\int\eta\,\bigl(K_h^n(x,\cdot)-\pi_h\bigr)$ and $|\eta|\le\snorm{\eta}_\infty\le\snorm{\eta}_\infty V$, the ergodicity \eqref{eq:ass-contraction} gives $|(K_h^n\hat\eta)(x)|\le\snorm{\eta}_\infty\,\|K_h^n(x,\cdot)-\pi_h\|_{V}\le C\snorm{\eta}_\infty V(x)(1-ch)^n$.  Summing the geometric series, $g=\sum_{n\ge0}K_h^n\hat\eta$ converges with $(I-K_h)g=\hat\eta$ and
  \[
    |g(x)|\le\sum_{n\ge0}C\snorm{\eta}_\infty V(x)(1-ch)^n=\frac{C}{ch}V(x)\snorm{\eta}_\infty=O(h^{-1})V(x)\snorm{\eta}_\infty . \qedhere
  \]
\end{proof}

\begin{proposition}[a priori total-variation bound]\label[proposition]{thm:ula-bias}
    Suppose $G'\in C^2_{\mathrm b}([0,\infty))$ and \cref{ass:moment,ass:contraction} hold.  For $\delta\in (1,2)$, there are $C,h_1>0$ such that, for all $h\in(0,h_1]$,
  \begin{equation}\label{eq:ula-tv}
    \|\pi_h-\pi\|_{\mathrm{TV}}\le C\,h^{\,\delta-1}.
  \end{equation}
\end{proposition}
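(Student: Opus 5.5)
The plan is the standard consistency--stability telescoping, combining \cref{lem:boundary-charge} with \cref{lem:poisson-sup}. Fix a measurable $\eta$ with $\snorm{\eta}_\infty\le 1$. The total-variation norm is a supremum of $|\pi_h(\eta)-\pi(\eta)|$ over such $\eta$, so it suffices to bound this difference uniformly in $\eta$.

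\textbf{Step 1 (Poisson identity).} Let $g=\sum_{n\ge0}K_h^n(\eta-\pi_h(\eta))$ be the solution of the discrete Poisson equation from \cref{lem:poisson-sup}, so that $(I-K_h)g=\eta-\pi_h(\eta)$ and $|g|\le C h^{-1}V\snorm{\eta}_\infty$. Integrate the Poisson equation against $\pi$. Because $\pi$ has a finite second moment (its density carries the Gaussian factor $e^{-\lambda\beta x^2/2}$, while $e^{-\beta G}$ is bounded since $G'$ is bounded), $g$ is $\pi$-integrable. This gives
\[
\pi(\eta)-\pi_h(\eta)=\pi(g)-\pi K_h(g)=-\nu_h(g).
\]
The exchange $\pi(K_h g)=(\pi K_h)(g)$ is Fubini. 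It is justified because $K_hV\le V+C$, a one-step drift bound available from \cref{ass:moment}/\cref{lem:lyapunov}, so that $\pi K_h(V)<\infty$.

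\textbf{Step 2 (combine).} Since $|g|\le Ch^{-1}V$, we have
\[
|\nu_h(g)|\le \int |g|\,d|\nu_h|\le Ch^{-1}\|\nu_h\|_V.
\]
Applying \cref{lem:boundary-charge} then gives
\[
|\pi(\eta)-\pi_h(\eta)|\le Ch^{-1}\,C_{\mathrm b}h^{\delta}=Ch^{\delta-1},
\]
uniformly over $\snorm{\eta}_\infty\le1$. Taking the supremum over $\eta$ yields \eqref{eq:ula-tv}, with $h_1=\min$ of the $h_0$'s from \cref{lem:boundary-charge,ass:contraction}. Any factor of $2$ from the chosen TV convention is absorbed into $C$.

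\textbf{Main obstacle.} The substance lies in the one-step defect estimate, which is already packaged as \cref{lem:boundary-charge}. What remains delicate here is only the interchange in Step~1: checking that $\pi$ and $\pi K_h$ both integrate $V$, so that $\nu_h(g)$ is well defined and the Neumann series may be integrated term by term. For that I would use dominated convergence with the bound $\sum_n|K_h^n\hat\eta|\le Ch^{-1}V$ from the proof of \cref{lem:poisson-sup}. It is worth noting that the argument loses a full factor $h^{-1}$ from the Poisson bound. This is exactly why the rate here is $h^{\delta-1}$ rather than the sharper $h|\log h|$: improving it would require replacing the crude sup bound on $g$ near the boundary with finer regularity information.
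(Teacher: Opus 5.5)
Your proposal is correct and follows the paper's proof essentially line for line: the discrete Poisson solution of \cref{lem:poisson-sup}, the identity $\nu_h(g)=\pi_h(\eta)-\pi(\eta)$ obtained by integrating the Poisson equation against $\pi$, and the pairing bound $|\nu_h(g)|\le Ch^{-1}\|\nu_h\|_V\le Ch^{\delta-1}$ from \cref{lem:boundary-charge}. Your integrability check is a sensible addition that the paper omits, but justify $\pi K_h(V)<\infty$ via $\pi K_h(V)=\pi(V)+\nu_h(V)\le\pi(V)+\|\nu_h\|_V<\infty$, since \cref{ass:moment} bounds only long-run moments and does not by itself give a one-step drift $K_hV\le V+C$.
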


\begin{proof}
  By \cref{ass:moment,ass:contraction} the chain $K_h$ has a unique stationary distribution $\pi_h$.  Fix a bounded measurable $\eta$ and let $g$ be the discrete Poisson solution of \cref{lem:poisson-sup}, $(I-K_h)g=\eta-\pi_h(\eta)$.  Since $\pi$ is a probability measure,
  \[
    \nu_h(g)=(\pi K_h-\pi)(g)=-\pi\bigl((I-K_h)g\bigr)
      =-\pi\bigl(\eta-\pi_h(\eta)\bigr)=\pi_h(\eta)-\pi(\eta).
  \]
  By \cref{lem:poisson-sup}, $|g(x)|\le Ch^{-1}V(x)\snorm{\eta}_\infty$, so integrating against the signed measure $\nu_h$ and using \cref{lem:boundary-charge},
  \[
    \begin{aligned}
      |\pi_h(\eta)-\pi(\eta)|=|\nu_h(g)|
      &\le\int|g|\,d|\nu_h|\le Ch^{-1}\snorm{\eta}_\infty\int V\,d|\nu_h|\\
      &=Ch^{-1}\snorm{\eta}_\infty\,\|\nu_h\|_V\le C\,h^{\,\delta-1}\snorm{\eta}_\infty,
    \end{aligned}
  \]
  for every bounded measurable $\eta$.  Taking the supremum over $\snorm{\eta}_\infty\le1$ gives \eqref{eq:ula-tv}.
\end{proof}

\begin{remark}[satisfying the hypotheses]\label[remark]{rem:discharge}
  \cref{ass:moment} holds under a one-sided Lipschitz condition (see \cref{lem:lyapunov}), and \cref{ass:contraction} holds whenever $G$ has bounded gradient, in particular for non-convex potentials (see \cref{lem:contraction-nonconvex}).
\end{remark}
\subsection{Sharp bias rate}
The rate $\delta-1$ of \cref{thm:ula-bias} is the product of the consistency error $\|\nu_h\|_{V}=O(h^\delta)$ (\cref{lem:boundary-charge}; confirmed directly, across $\delta$, by the exact atom-defect computation of \cref{fig:one-step-atom}) and the Poisson bound $|g|_V=O(h^{-1})$ (\cref{lem:poisson-sup}).  The intermediate inequality $|\nu_h(g)|\le|g|_V\,\|\nu_h\|_{V}$ discards the sign correlation between $g$ and $\nu_h$, and it is far from sharp: the true stationary bias is smaller, its rate is independent of $\delta$, and it admits a first-order expansion with an explicit constant (\cref{thm:sharp-rate}). \Cref{thm:ula-bias} is nonetheless not superfluous: its bound \eqref{eq:ula-tv} is the a priori input that the sharp expansion bootstraps to that finer rate.

\begin{theorem}[first-order expansion of the bias: the sharp rate]\label{thm:sharp-rate}
  Let the assumptions of \cref{thm:ula-bias} hold, and fix $\delta\in(1,2)$.  Then, for every bounded measurable $\eta\colon[0,\infty)\to\mathbb R$,
  \begin{equation}\label{eq:sharp-law}
    \pi_h(\eta)-\pi(\eta)\;=\;K_\star\,\hat\eta(0)\,h\abs{\log h } \;+\;O\bigl(h\,\snorm{\eta}_\infty\bigr),
    \qquad \hat\eta(0)\coloneqq \eta(0)-\pi(\eta),
  \end{equation}
  with the observable-independent constant
  \begin{equation}\label{eq:Kstar}
    K_\star\;=\;\tfrac12\,(\delta-1)\,\beta\,G'(0)^2\,\pi(\{0\}) .
  \end{equation}
  Equivalently, $\pi_h=\pi+K_\star\,h\abs{\log h } \,(\delta_0-\pi)+O(h)$ in total variation. 
\end{theorem}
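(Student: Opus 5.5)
We write the bias as a telescoped Poisson identity, but instead of bounding $|\nu_h(g)|$ crudely we expand the pairing using the structure of both factors. We use the \emph{continuous} Poisson solution rather than the discrete one. Let $\psi$ solve $-\mathcal L\psi=\hat\eta=\eta-\pi(\eta)$ for the sticky generator with potential, including the Wentzell condition at $0$. The scale--speed representation gives $\psi'(x)=-s'(x)\int_{[0,x]}\hat\eta\,dm$, and hence $\psi(x)-\psi(0)=O(x^{2-\delta})$ near $0$ with an explicit leading cusp coefficient proportional to $\hat\eta(0)\,m(\{0\})$, namely $\psi(x)\approx\psi(0)-\hat\eta(0)\mu^{-1}e^{-\beta G(0)}x^{2-\delta}/(2-\delta)$. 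Then
\[
\pi_h(\eta)-\pi(\eta)=-\pi_h(\mathcal L\psi)=h^{-1}\pi_h\bigl((K_h-I-h\mathcal L)\psi\bigr),
\]
using $\pi_h K_h=\pi_h$ (after verifying $\psi\in L^1(\pi_h)$ via $V$-growth).

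The task reduces to expanding the local consistency error $E_h(x)\coloneqq(K_h-I-h\mathcal L)\psi(x)$ and integrating it against $\pi_h$. For $x\ge L_h$ we use $K_h=R_h\circ\phi_h$ and $R_h=(I-h\mathcal L_1)^{-1}$, so that $K_h\psi=\psi\circ\phi_h+h\mathcal L_1K_h\psi$. Expanding in $h$ gives $E_h=h^2(\cdots)$, with terms built from $G'^2\psi''$ and $\mathcal L_1(G'\psi')$. Since $\psi''\sim x^{-\delta}$, these are $O(h^2x^{-\delta})$, which is integrable against the density $x^{\delta-1}$ of $\pi_h\approx\pi$ only down to a logarithmic divergence. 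Concretely, the term $\tfrac12h^2G'(x)^2\psi''(x)$ from the Taylor expansion of $\psi(x-hG')$ gives
\[
\int_{L_h}^1\tfrac12G'(0)^2\psi''(x)\,\rho(x)\,dx,
\]
with $\rho(x)\approx\beta x^{\delta-1}e^{-\beta G(0)}/Z$ and $\psi''\approx\hat\eta(0)(1-\delta)\mu^{-1}e^{-\beta G(0)}x^{-\delta}$. After normalising by $\pi(\{0\})=\mu^{-1}e^{-\beta G(0)}/Z$, this produces $\tfrac12(\delta-1)\beta G'(0)^2\pi(\{0\})\hat\eta(0)\,h\abs{\log h}$, matching $K_\star$. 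The sign must be checked against the orientation of the identity above.

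We will show that every other contribution is $O(h\snorm{\eta}_\infty)$. The resolvent remainder $h\mathcal L_1(K_h-I)\psi$ has no $x^{-\delta}$ singularity at leading order, because $\mathcal L_1$ of the cusp is regular. The layer $[0,L_h]$, where the cutoff $\chi_h$ acts and $\psi(\phi_h(x))-\psi(x)=O(h^{2-\delta})$, has $\pi_h$-mass $O(h^\delta)$, so it contributes $h^{-1}\cdot h^{2-\delta}\cdot h^{\delta}=O(h)$. The atom contributes $E_h(0)=R_h\psi(0)-\psi(0)-h\mathcal L\psi(0)=O(h^2)$ by the boundary generator identity \eqref{eq:bdy-generator-match}. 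Finally, the far field is controlled by $V$-moments.

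Two steps remain: replacing $\pi_h$ by $\pi$ in the leading integral, and controlling the singular integrals. For the first, \cref{thm:ula-bias} gives $\|\pi_h-\pi\|_{\mathrm{TV}}=O(h^{\delta-1})$, which is not yet enough, since the integrand has size $h\abs{\log h}$ in total. We therefore bootstrap: write the integrand as $x^{-\delta}$ times a bounded function, and bound $|\pi_h-\pi|$ locally on dyadic shells $[2^{-k-1},2^{-k}]$. On each shell we use the smooth interior density from \cref{thm:ula-exists} and a local version of the TV estimate, which shows that $\pi_h-\pi$ restricted to the shell has density $O(h^{\delta-1}x^{\delta-1})$ relative to $\pi_0$. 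Summing over shells makes the correction $o(h\abs{\log h})$, and iterating upgrades it to $O(h)$.

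The main obstacle is this bootstrap, together with justifying differentiation of $\psi$ for merely bounded measurable $\eta$. For the latter we use that $\psi'$ is given explicitly by the scale--speed integral, so that $\psi''=-(s''/s')\psi'-s'\hat\eta m'$. This keeps only bounded $\eta$ under integrals, with no derivatives of $\eta$, and the cusp coefficient depends only on $\hat\eta(0)$, as the statement requires.
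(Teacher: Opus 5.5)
\textbf{Verdict: there is a genuine gap.} Once it is repaired with the paper's key idea, your proof becomes the paper's proof. The gap is in how you handle the resolvent part of the local error.

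\textbf{The identity you start from is incorrect.} You write $K_h\psi=\psi\circ\phi_h+h\,\mathcal L_1K_h\psi$, but $\mathcal L_1$ does not commute with composition by $\phi_h$. The correct identity uses that $\psi$ satisfies the driftless Wentzell condition, which holds because the flux at $0$ does not depend on the potential. It reads $K_h\psi=\psi\circ\phi_h+h\,K_hF$ with $F\coloneqq\mathcal L_1\psi$, so that
\[
  E_h=\bigl[\psi\circ\phi_h-\psi+hG'\psi'\bigr]+h\,(K_h-I)F .
\]

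\textbf{This $F$ is rough and singular.} Because $\psi$ solves the Poisson equation for $\mathcal L$ and not for $\mathcal L_1$, you have $F=-\hat\eta+G'\psi'$ on $(0,\infty)$.
\begin{itemize}
\item For bounded measurable $\eta$, $(K_h-I)\hat\eta$ is not even $O(h)$ pointwise, so no expansion in $h$ exists.
\item Even for smooth $\eta$, $F$ is singular: $G'\psi'\sim G'(0)c\,x^{1-\delta}$ with $c=-\hat\eta(0)/\mu$. It is $\mathcal L_1$ of the cusp $x^{2-\delta}$ that is regular. By contrast, $\mathcal L_1(x^{1-\delta})=\tfrac{\delta-1}{\beta}x^{-1-\delta}+O(x^{1-\delta})$.
\end{itemize}

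\textbf{The pointwise expansion produces terms larger than the main term.} Expanding $h(K_h-I)F\approx h^2(\mathcal L_1F-G'F')$ and dividing by $h$ gives two problems.
\begin{itemize}
\item A term of size $h\,x^{-1-\delta}$. Its integral against $\rho\asymp x^{\delta-1}$ over $(\sqrt h,1)$ is of order $h^{1/2}$.
\item A term $-hG'(G'\psi')'$. Its $x^{-\delta}$ coefficient is $-2$ times that of your main term $\tfrac12hG'^2\psi''$, which would reverse the sign of the logarithm.
\end{itemize}

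\textbf{The atom is not $O(h^2)$.} You have $E_h(0)=h\,p_{\rm leave}\bigl(\nu_\alpha(F)-F(0)\bigr)$, with $p_{\rm leave}\asymp h^{\delta/2}$. The exit law $\nu_\alpha$ lives on the scale $\sqrt h$, where $F\asymp h^{(1-\delta)/2}$ whenever $G'(0)\hat\eta(0)\neq0$. Hence $E_h(0)\asymp h^{3/2}$, which contributes $O(h^{1/2})$ to the bias.

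These large pieces cancel only globally, against the region $x\lesssim\sqrt h$ where a Taylor expansion of the resolvent is invalid. So the term-by-term bookkeeping you propose cannot yield $K_\star h\lvert\log h\rvert+O(h)$.

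\textbf{The missing idea is to remove this term exactly rather than expand it.} Stationarity gives $\pi_h\bigl((K_h-I)F\bigr)=0$. This is legitimate because $|F|\le C\|\eta\|_\infty(1+x^{1-\delta})$ and $\pi_h((0,\epsilon])\le C\epsilon^\delta$ for $\epsilon\le2\sqrt h$ (\cref{lem:density-bound}). What remains is
\[
  \pi_h(\eta)-\pi(\eta)=h^{-1}\pi_h\bigl((\psi\circ\phi_h-\psi+hG'\psi')\mathbf 1_{(0,\infty)}\bigr).
\]
This is exactly the paper's preconditioned identity \eqref{eq:precond-identity}. The paper obtains it by testing $\pi_hK_h=\pi_h$ against $(I-h\mathcal L_1)g_\star$, and uses uniqueness for the resolvent boundary-value problem to justify $R_h(I-h\mathcal L_1)g_\star=g_\star$ (\cref{lem:precond}).

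\textbf{Your main-term computation then goes through, with two slips corrected.}
\begin{itemize}
\item $\psi'\approx-\hat\eta(0)\mu^{-1}x^{1-\delta}$ carries no factor $e^{-\beta G(0)}$, because $s_G'$ contributes $e^{\beta G(0)}$.
\item $\psi''\approx(\delta-1)\hat\eta(0)\mu^{-1}x^{-\delta}$, not $(1-\delta)$. This sign settles your orientation question in favour of $K_\star\ge0$.
\end{itemize}

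\textbf{The bootstrap is only sketched.} \cref{thm:ula-exists} is qualitative and supplies no density bound of the form $O(h^{\delta-1}x^{\delta-1})$. The paper instead proves the local estimate $|\pi_h-\pi|((0,\epsilon])\le C(h^\delta+h^{\delta-1}\epsilon^\delta\lvert\log\epsilon\rvert)$ for $\epsilon\le2\sqrt h$ (\cref{lem:local-tv}). It derives this from the uniform one-step envelope \eqref{eq:layer-onestep} together with a split of the discrete Poisson series. It then applies the estimate on dyadic shells below $\sqrt h$. On $(\sqrt h,1]$ it uses the global bound, re-substituted once: from $h^{\delta-1}$ to $h^{\delta/2}$, then to $O(h)$.
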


The proof is given in Appendix~\ref{app:sharp-rate}.  Its key step removes the resolvent kernel from the stationarity equation exactly: testing $\pi_hK_h=\pi_h$ against the \emph{preconditioned} function $(I-h\mathcal L_1)g_\star$, where $\mathcal Lg_\star=-\hat\eta$ is the continuous Poisson solution, gives $K_h\bigl[(I-h\mathcal L_1)g_\star\bigr]=g_\star\circ\phi_h$ exactly, so the entire bias is carried by the deterministic Euler map:
\begin{equation}\label{eq:precond-identity}
  \pi_h(\eta)-\pi(\eta)
  \;=\;\pi_h\Bigl(\bigl[\,G'g_\star'-h^{-1}\bigl(g_\star-g_\star\circ\phi_h\bigr)\bigr]\mathbf 1_{(0,\infty)}\Bigr).
\end{equation}

\begin{remark}[structure of the leading error]\label[remark]{rem:sharp}
  \cref{thm:sharp-rate} supersedes the rate of \cref{eq:ula-tv}: $h\abs{\log h } \ll h^{\delta-1}$ for every $\delta\in(1,2)$, and the exponent is independent of $\delta$.  The leading error is \emph{rank one}: a transfer of mass $K_\star h\abs{\log h } $ from the bulk (removed proportionally to $\pi$) onto the sticky atom.  Since $K_\star\ge0$, the ULA \emph{over-weights} the atom at leading order, by an amount set by the boundary drift through $G'(0)^2$.  Set against the classical theory, this is a genuine departure: for the Euler--Maruyama discretisation of a non-degenerate Langevin diffusion with smooth potential on $\mathbb{R}^d$, the stationary bias is first order, $\pi_h(\eta)-\pi(\eta)=C(\eta)h+O(h^2)$ \citep{TalayTubaro1990}, and the resulting ULA has an $O(h)$ bias \citep{RobertsTweedie1996,DurmusMoulines2017}.  The accessible boundary and the atom of $\pi$ inflate this to $h\abs{\log h}$, the extra $\abs{\log h}$ being generated on the $O(h)$ boundary layer, where the truncation error pairs with the cusped invariant density to give a logarithmically divergent integral cut off at the routing scale $x\asymp h$; the classical $O(h)$ rate is recovered when the boundary is inactive ($G'(0)=0$, or $\hat\eta(0)=0$).
\end{remark}

\begin{remark}[scope]\label[remark]{rem:scope}
  The bounds \eqref{eq:ula-tv} and \eqref{eq:sharp-law} are established for the analysis clamp \eqref{eq:clamped-kernel}.  Any other clamped Euler map \eqref{eq:clamped-general}, including the hard clamp used in \cref{sec:experiments}, differs only on the $O(h)$ boundary layer and exhibits the same rates, with the same leading constant $K_\star$ (\cref{rem:sharp}(iii)).
\end{remark}

%
%
%
\begin{algorithm}
  \caption{ULA for the sticky CIR with potential $G$}
  \label[algorithm]{alg:ula}
  \begin{algorithmic}[1]
  \Statex \textsc{Precomputation.} Resolvent tables, $c_\mu$ and $p_{\rm leave}$ as in \cref{alg:sticky-cir}.
  \Statex
  \State Draw $u_{k+1}\sim P_{\phi_h(u_k)}^{G=0}$ from \cref{alg:sticky-cir} (Case~1 if $\phi_h(u_k)>0$, else Case~2): the proposal of \cref{alg:mcmc}, accepted unconditionally.
  \end{algorithmic}
  \end{algorithm}
\section{Numerical experiments}\label{sec:experiments}

We report two experiments at base parameters $\lambda=1$, $\beta=2$, $\delta=1.5$:
\begin{itemize}
  \item \emph{Experiment 1} (\cref{fig:bdy-error,fig:ess,fig:density}): compare MCMC (\cref{alg:mcmc}, \cref{sampler-gpotential}) and ULA (\cref{alg:ula}, \cref{sampler-ula}) on a grid of potentials $G(u) \in \{u^2/2,\,(u-1)^2/2,\,u^3/3\}$, stickiness $\mu \in \{0.5,\,1.0,\,2.0\}$, and step rates $\alpha \in \{2,\,5,\,10\}$ ($200{,}000$ steps, $4$ chains, $10{,}000$-step warmup), with diagnostics for boundary mass error, ESS per second, and interior density.   
  \item \emph{Experiment 2} (\cref{fig:MALA-acceptance,fig:MALA-bdy,fig:ula-bias}): fix $\mu=1$ and consider four potentials: $G=0$ (reference exact sampler), $G(u)=u^2/2$ ($G'(0)=0$), $G(u)=(u-1)^2/2$ ($G'(0)=-1$), and $G(u)=2u$ ($G'(0)>0$). We present MCMC acceptance rates (at $\alpha=5$, $h=0.2$, $30{,}000$ steps) and ULA bias decay across $\alpha$ ($10{,}000$ steps per $\alpha$) separately. Theoretical boundary mass $\pi(\{0\})$ for each potential is given in \cref{tab:theory}.
\end{itemize}
ESS (effective sample size, accounting for autocorrelation within each chain) and ESS per second (ESS normalised by wall-clock time) are computed via ArviZ~\citep{arviz2019} on interior draws ($u>0$), trimmed to equal length across chains.
\paragraph{Boundary mass error}

\cref{fig:bdy-error} shows the signed error $\hat\pi(\{0\})-\pi(\{0\})$,
where $\hat\pi(\{0\})$ denotes the empirical boundary fraction averaged
across $4$ chains, with $\pm 2$ standard error bands.
 The MCMC error (green) is indistinguishable from Monte Carlo error across all settings,
confirming exact invariance. The ULA error (orange) is larger and reduces as $\alpha$ increases ($h=1/\alpha\to0$), consistent with the vanishing atom bias guaranteed by \cref{thm:ula-bias}.
\begin{figure}[ht]
  \centering
  \includegraphics[width=\textwidth]{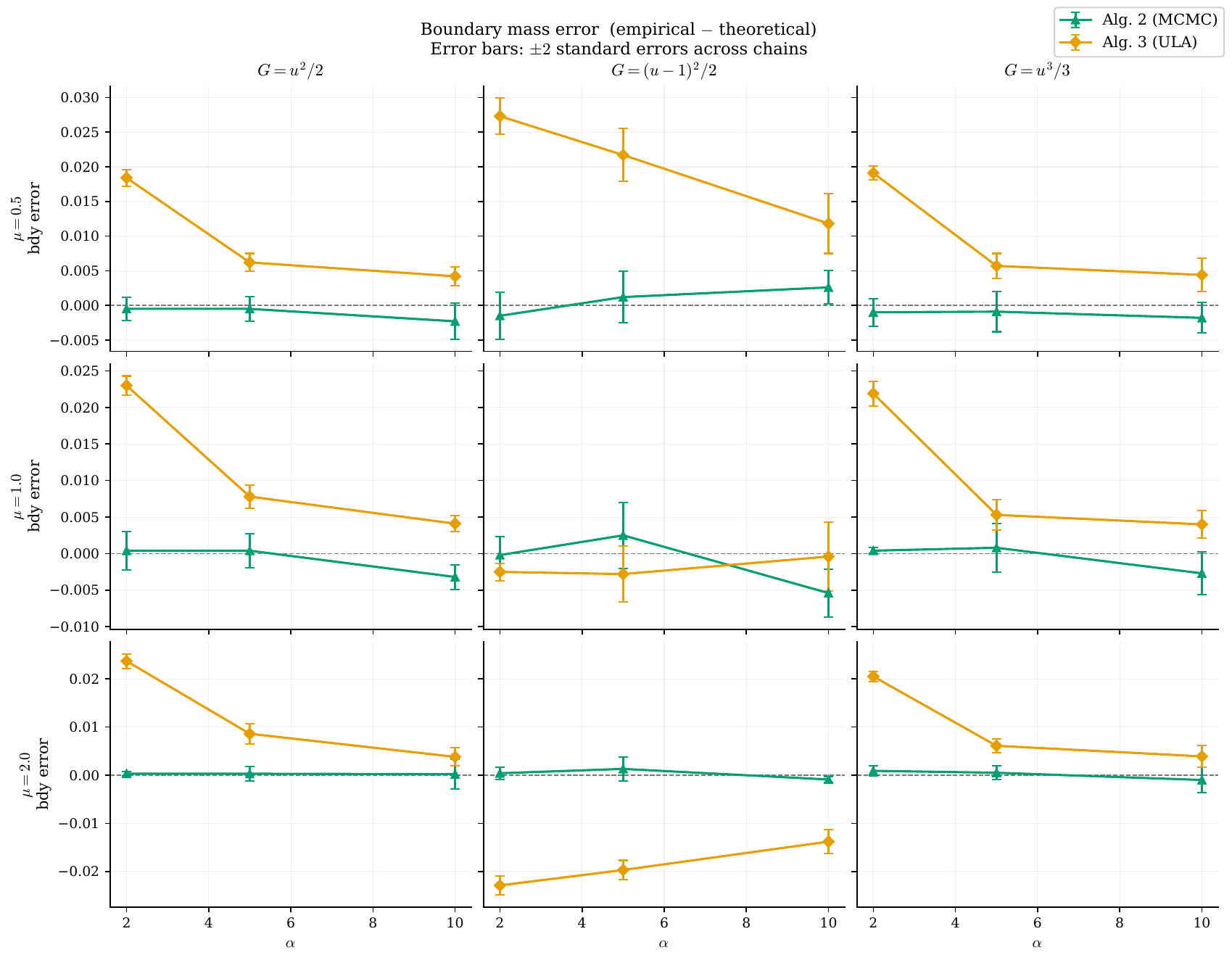}
  \caption{Signed boundary mass error $\hat\pi(\{0\})-\pi(\{0\})$
    for both algorithms across potentials (columns) and stickiness
    parameters (rows), as a function of step rate $\alpha$.
    Error bars are $\pm 2$ standard errors across $4$ chains.
    Small values confirm that MCMC targets the correct invariant measure, with ULA exhibiting  bias that increases for small $\alpha$.}
  \label{fig:bdy-error}
\end{figure}


\paragraph{Efficiency: ESS per second}

\cref{fig:ess} shows bulk ESS/s of wall-clock time.  ULA achieves the highest ESS/s across most settings and  ESS/s decreases sharply with $\alpha$ for both algorithms, with no clear dependence on $\mu$.

\begin{figure}[ht]
  \centering
  \includegraphics[width=\textwidth]{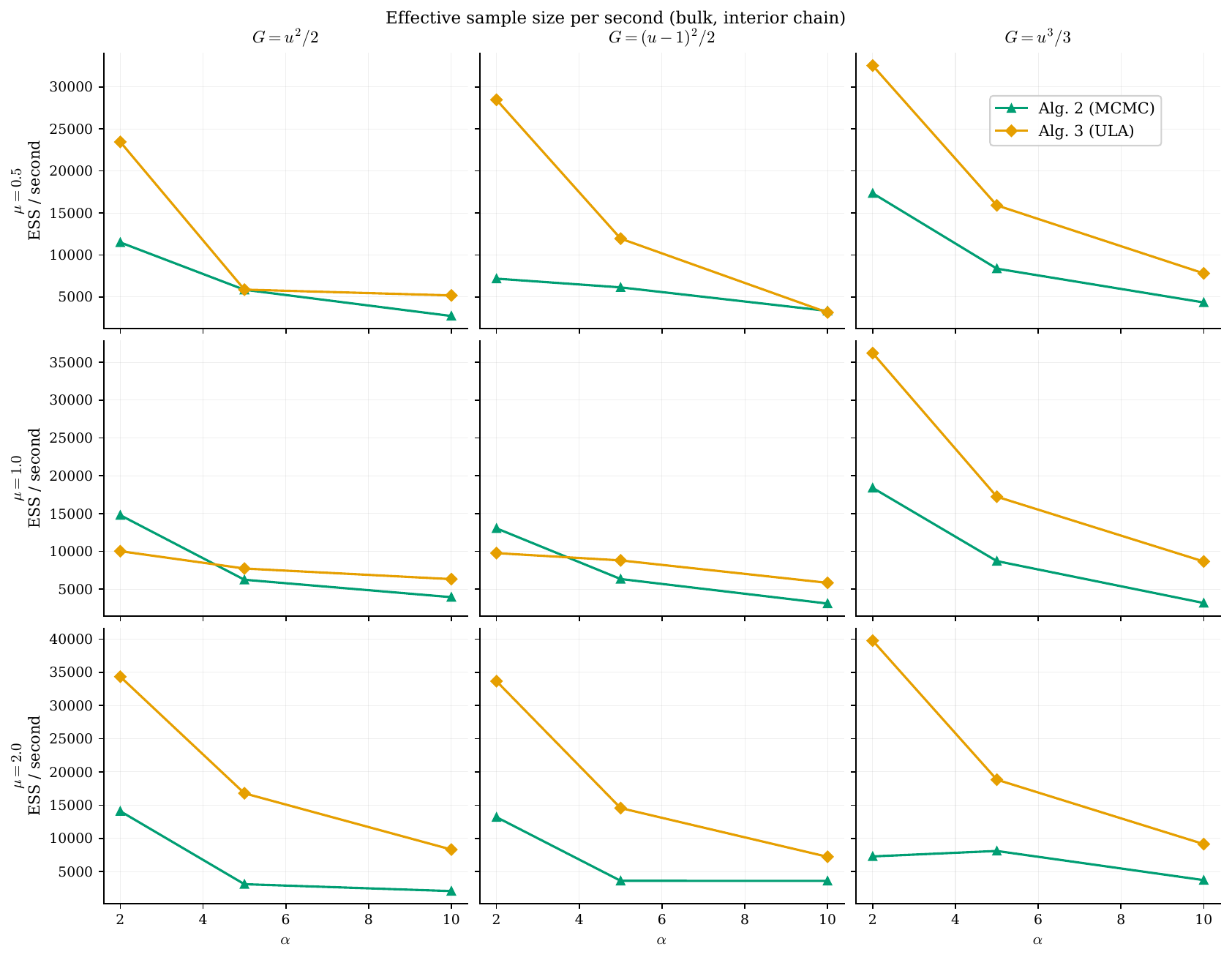}
  \caption{Bulk ESS per second (interior chain) for two algorithms
    across potentials (columns) and stickiness parameters (rows),
    as a function of $\alpha$.  Higher is better.  ESS/s rewards
    both mixing efficiency and computational speed.}
  \label{fig:ess}
\end{figure}

\paragraph{Interior density}
\cref{fig:density} overlays the empirical interior density from both
algorithms on the theoretical $\pi$ at the reference setting
$G=(u-1)^2/2$, $\mu=1$, $\alpha=20$.  Both samplers match the
interior density and boundary mass closely: the MCMC gives
$\hat\pi(\{0\})=0.277$ and the ULA gives $\hat\pi(\{0\})=0.275$,
against the theoretical value $0.275$.

\begin{figure}[ht]
  \centering
  \includegraphics[width=\textwidth]{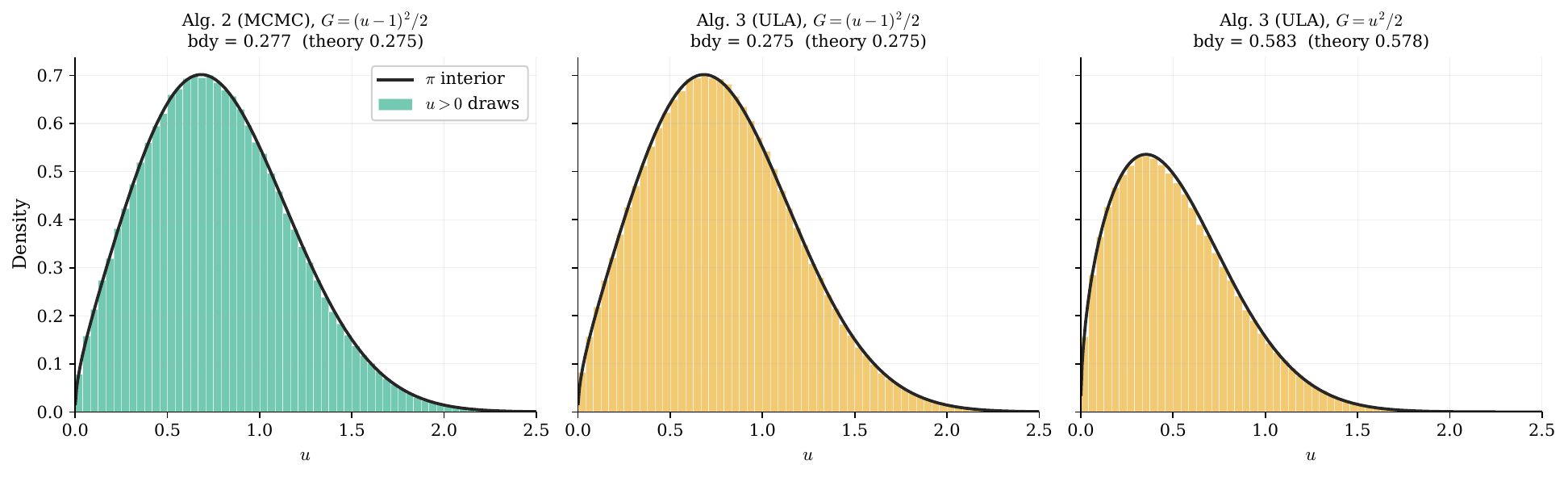}
  \caption{Empirical interior density (histogram) versus theoretical
  $\pi$ (black curve) at $\mu=1$, $\alpha=20$ showing close match in all cases.
  Left: MCMC (Alg.~2) with $G=(u-1)^2/2$.
  Centre: ULA (Alg.~3) with $G=(u-1)^2/2$.
  Right: ULA with $G=u^2/2$, where $G'(0)=0$.}
  \label{fig:density}
\end{figure}

\begin{table}[ht]
\centering
\begin{tabular}{lc}
\toprule
Potential & $\pi(\{0\})$ \\
\midrule
$G=0$           & $0.449$ \\
$G=u^2/2$       & $0.579$ \\
$G=(u-1)^2/2$   & $0.275$ \\
$G=2u$          & $0.844$ \\
\bottomrule
\end{tabular}
\caption{Theoretical boundary mass $\pi(\{0\})$ for each test potential
($\lambda=1$, $\beta=2$, $\delta=1.5$, $\mu=1$).}
\label{tab:theory}
\end{table}

\paragraph{MCMC acceptance rates}

\cref{fig:MALA-acceptance,fig:MALA-bdy} summarise the MCMC performance.
\cref{fig:MALA-acceptance} shows the three acceptance rates:
interior~$\to$~interior, interior~$\to$~boundary (to-0), and
boundary~$\to$~interior (from-0). Acceptance rates remain above 70\% across all potentials and move types. \cref{fig:MALA-bdy} shows good agreement of the 
empirical boundary fraction against the theoretical target.

\begin{figure}[ht]
  \centering
  \includegraphics[width=\textwidth]{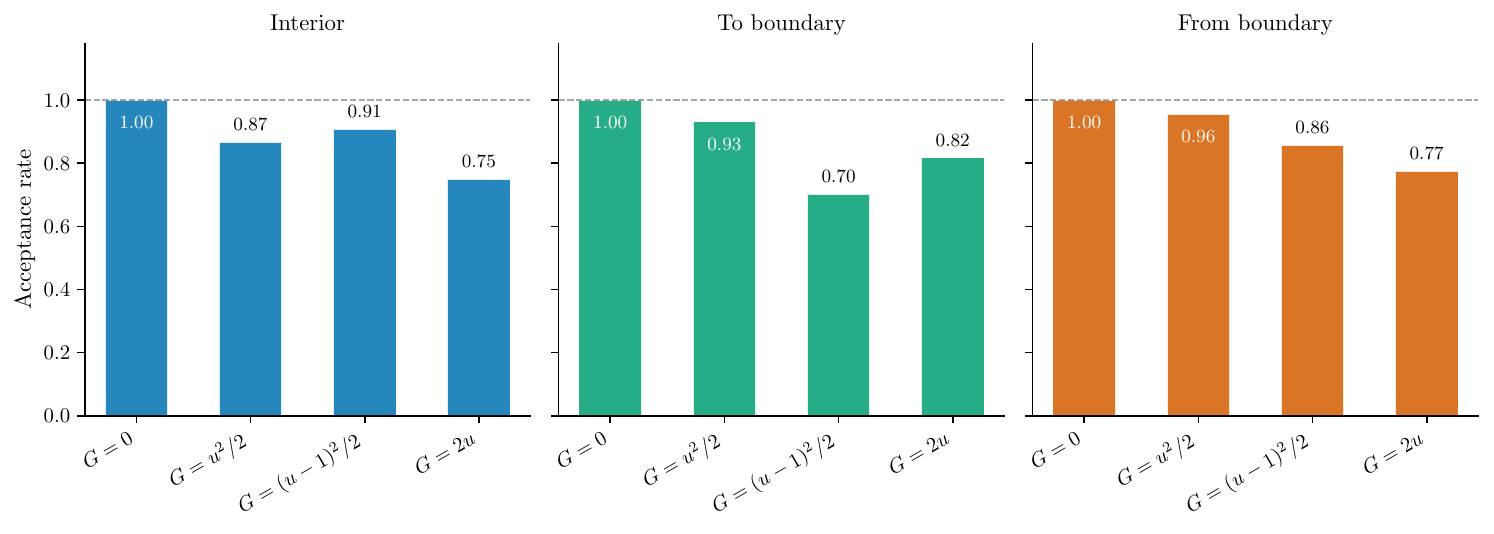}
  \caption{MCMC acceptance rates at $\alpha=5$ ($h=0.2$) for four test
    potentials.  Left: interior--interior rate.  Centre: interior-to-boundary
    rate.  Right: boundary-to-interior rate.  Dashed line marks perfect
    acceptance ($=1$).}
  \label{fig:MALA-acceptance}
\end{figure}

\begin{figure}[ht]
  \centering
  \includegraphics[width=0.55\textwidth]{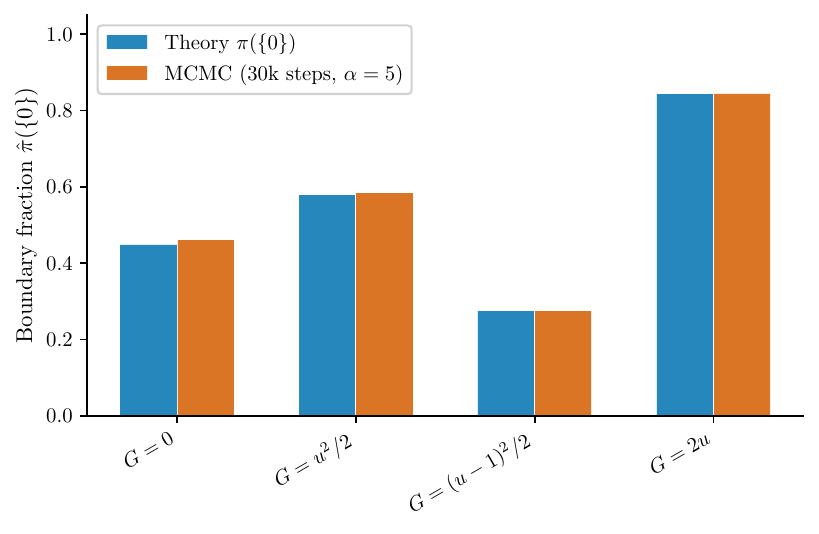}
  \caption{Empirical boundary fraction of the MCMC chain (30k steps,
    $\alpha=5$) versus the theoretical target $\pi(\{0\})$.}
  \label{fig:MALA-bdy}
\end{figure}

\paragraph{ULA boundary fraction vs step size}

The bias is consistent with the bound of
\cref{thm:ula-bias} (at $\delta=1.5$ the guarantee is $O(h^{1/2})$)
across all potentials tested.  The
clearest illustration is $G=2u$, where the bias is largest at large
$h$ and decreases visibly as $h\to 0$.  For the remaining potentials,
the bias is small (below $0.03$) across all step sizes tested, with
the decrease not clearly distinguishable from Monte Carlo
noise at the chain lengths used.

\begin{figure}[ht]
  \centering
  \includegraphics[width=\textwidth]{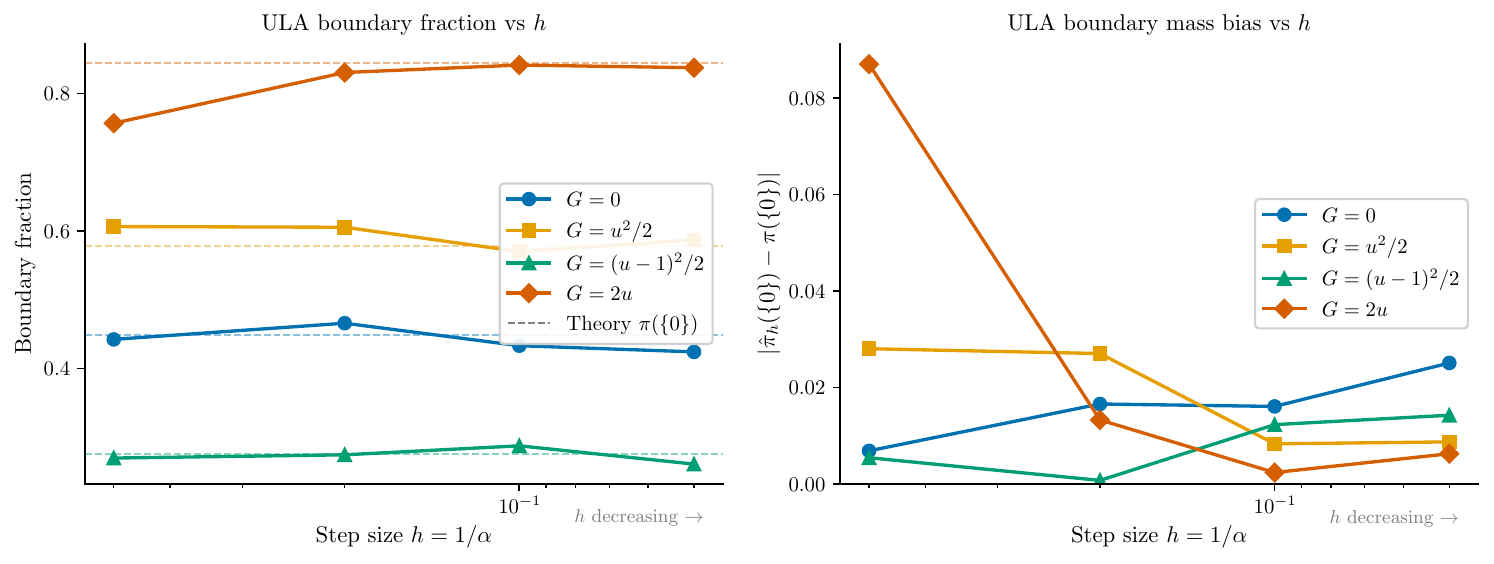}
  \caption{ULA boundary fraction (left) and absolute bias (right) versus
    step size $h=1/\alpha$.  Dashed horizontal lines on the left panel
    mark the theoretical values $\pi(\{0\})$.  The $x$-axis is reversed
    so that larger steps are on the left.  Each data point is computed from a chain of 10,000 steps.}
  \label{fig:ula-bias}
\end{figure}

\paragraph{One-step consistency at the atom}
The stationary bias of \cref{fig:ula-bias} is hard to resolve as $h\to0$, where
it drops below the Monte Carlo noise of a finite chain.  We therefore probe the
rate at its source: the one-step weighted defect
$\nu_h(\{0\})=(\pi K_h)(\{0\})-\pi(\{0\})$, the dominant term of
\cref{lem:boundary-charge}.  Since the one-step land-at-zero probability is
explicit, $K_h(x,\{0\})=w_0(\phi_h(x))$ \eqref{eq:w0}, this defect is obtained by
quadrature against the closed-form invariant measure, free of Monte Carlo error,
\[
  \nu_h(\{0\})=\int_{[0,\infty)} w_0(\phi_h(x))\,\pi(dx)-\pi(\{0\}).
\]
\cref{fig:one-step-atom} reports $|\nu_h(\{0\})|$ for $G(u)=2u$ and
$\delta\in\{1.3,1.5,1.7\}$, down to $h=10^{-3}$.  The defect decays as $h^{\delta}$, matching
the one-step consistency estimate of \cref{lem:boundary-charge}; amplified by the
$O(h^{-1})$ bound on the Poisson solution (\cref{lem:poisson-sup}), this is the
$O(h^{\delta-1})$ total-variation bias of \cref{thm:ula-bias}.

\begin{figure}[ht]
  \centering
  \includegraphics[width=0.62\textwidth]{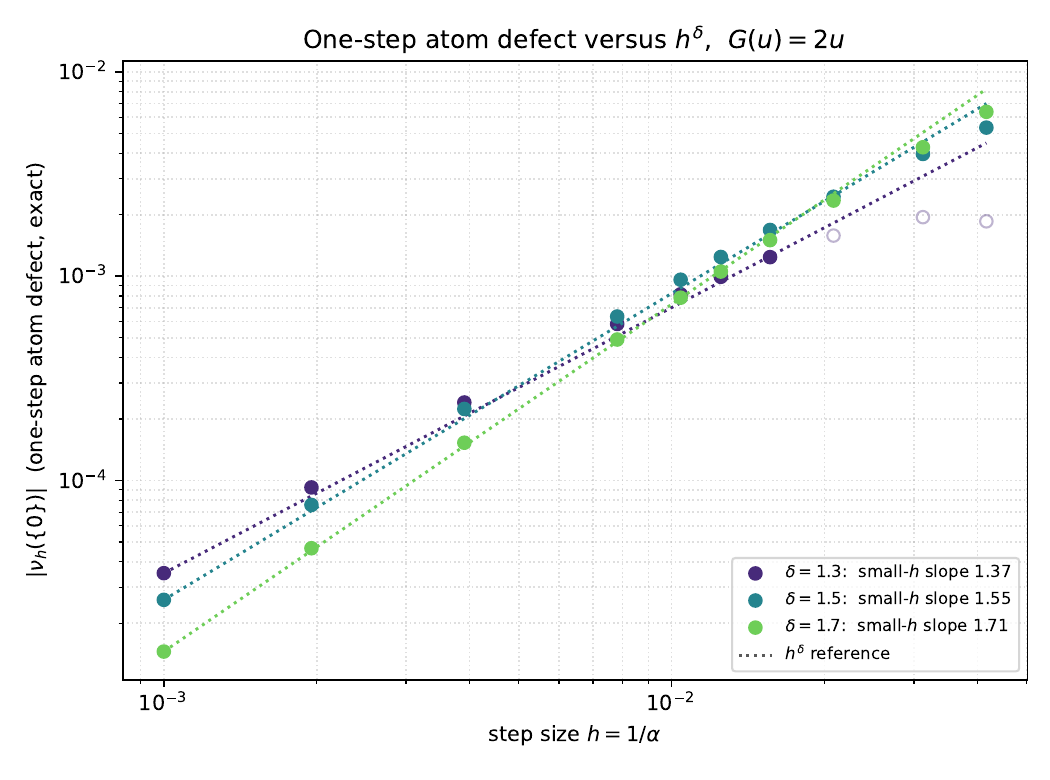}
  \caption{One-step atom defect
    $|\nu_h(\{0\})|=|(\pi K_h)(\{0\})-\pi(\{0\})|$ for the clamped ULA with
    $G(u)=2u$, evaluated exactly by quadrature against $\pi$, versus step size
    $h=1/\alpha$, for three values of $\delta$.  Dotted lines are $h^{\delta}$
    references and the legend reports the fitted small-$h$ slope; the defect
    follows $h^{\delta}$, the consistency rate behind the $O(h^{\delta-1})$ bias.
    Faded points (largest $h$, $\delta=1.3$) are pre-asymptotic and excluded from
    the slope.}
  \label{fig:one-step-atom}
\end{figure}

\paragraph{Sharpness of the stationary rate}
\cref{fig:sharp-rate} verifies the first-order expansion \cref{eq:sharp-law} of
\cref{thm:sharp-rate}: the stationary law
$\pi_h$ of the clamped kernel is obtained by solving $\pi_hK_h=\pi_h$ directly
on a grid (a linear system, free of Monte Carlo error), for $G(u)=(u-1)^2/2$, $\delta\in\{1.3,1.5,1.7\}$,
and $h\in[1/256,1/8]$.  Panel~(a) shows the atom-mass bias
$|\pi_h(\{0\})-\pi(\{0\})|$: its decay is visibly steeper than the
$h^{\delta-1}$ guides of \cref{thm:ula-bias}, follows $C\,h\abs{\log h } $, and its
rate is independent of
$\delta$.  Panel~(b) shows the collapse test: dividing the bias by
$\hat\eta(0)\,h\abs{\log h } $ sends the four observables
$\eta\in\{x,\,x^2,\,(1+x)^2,\,\mathbf 1_{\{0\}}\}$ onto a single constant for
each $\delta$, approaching the explicit $K_\star$ of \cref{eq:Kstar} as
$h\downarrow0$. The spread across
observables likewise shrinks like $1/\abs{\log h } $, largest at the smallest
$\delta$ as in \cref{eq:Kstar},  while the cusp-free control $\eta_c$ (chosen so that
$\hat\eta_c(0)=0$) falls towards zero: the observable enters the leading bias
only through $\hat\eta(0)$, as the rank-one structure of
\cref{thm:sharp-rate} dictates.

\begin{figure}[ht]
  \centering
  \includegraphics[width=\textwidth]{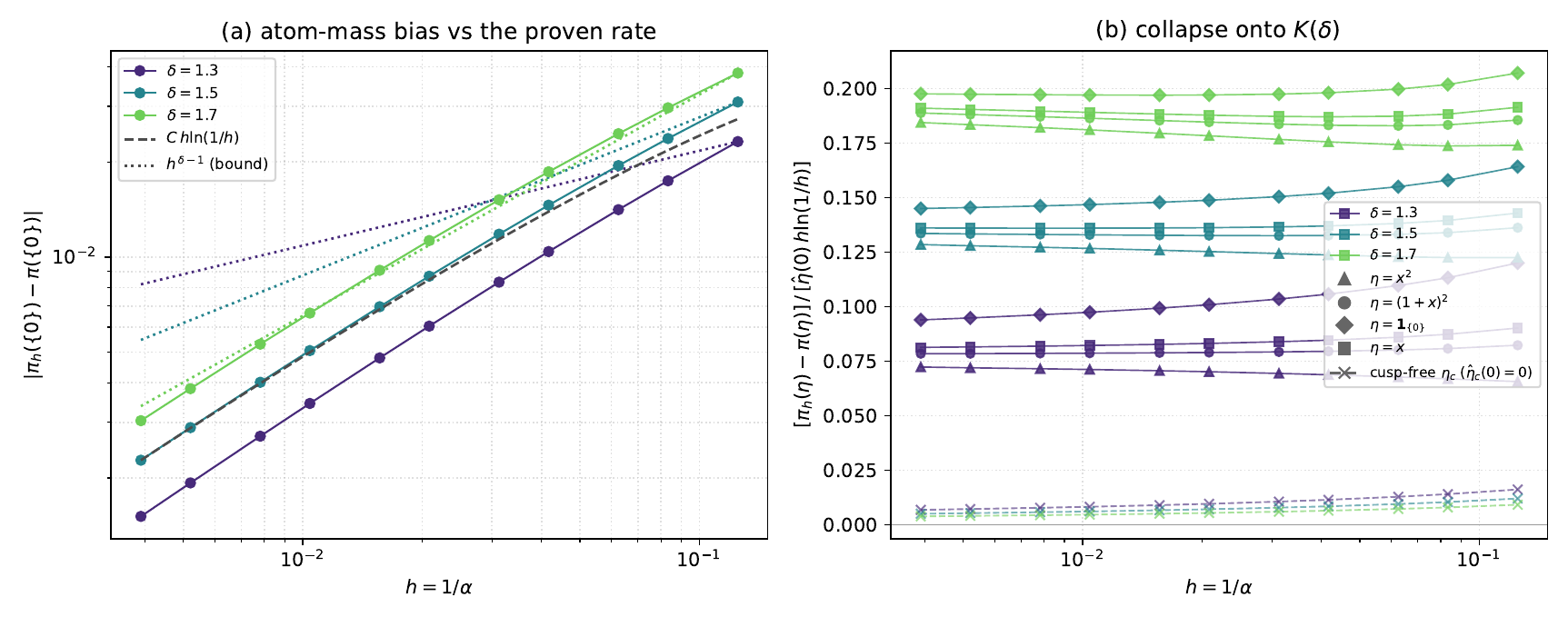}
  \caption{Numerical verification of the sharp-rate expansion \cref{eq:sharp-law}
    ($G(u)=(u-1)^2/2$; stationary law computed on a grid, free of Monte Carlo
    error).  (a)~Atom-mass bias versus $h$: dotted lines are the
    $h^{\delta-1}$ rates of \cref{thm:ula-bias}, the dashed line is
    $C\,h\abs{\log h } $; the measured decay
    is $\delta$-independent and follows the latter.  (b)~The bias divided by
    $\hat\eta(0)\,h\abs{\log h } $ collapses the observables $x$, $x^2$, $(1+x)^2$,
    $\mathbf 1_{\{0\}}$ onto one constant per $\delta$, approaching $K_\star$ of
    \cref{eq:Kstar} as
    $h\downarrow0$; the residual spread is the $O(h)$ remainder, shrinking like
    $1/\abs{\log h } $.  The cusp-free control $\eta_c$ with $\hat\eta_c(0)=0$
    (crosses, divided by $h\abs{\log h } $ only) tends to zero.}
  \label{fig:sharp-rate}
\end{figure}

\section{Conclusion}\label{sec:conclusion}

This paper develops both the theory and the numerics of the one-dimensional sticky CIR process for $\delta \in (1,2)$, the regime in which the boundary is accessible but not absorbing. The invariant measure has the atom-plus-density structure required of sparse priors, and the explicit Green's function of the reflecting CIR resolvent makes exact sampling of the $G \equiv 0$ invariant measure feasible. The Gibbsian reweighting that incorporates a potential $G$ (\cref{thm-inv-potential}) is targeted exactly by the Metropolis--Hastings sampler (\cref{alg:mcmc}), and approximately by an unadjusted Langevin scheme (\cref{alg:ula}) whose total-variation bias admits the first-order expansion $\pi_h-\pi=K_\star\,h\abs{\log h } \,(\delta_0-\pi)+O(h)$ with the explicit constant $K_\star$ (\cref{thm:sharp-rate}). The uniform-in-$h$ geometric ergodicity underpinning these bounds (\cref{ass:contraction}) holds for non-convex potentials with bounded gradient (\cref{lem:contraction-nonconvex}).  Beyond being cheaper per step, the unadjusted scheme is rejection-free, so it never stalls and decorrelates faster, giving the higher effective sample size per second of \cref{fig:ess}; the bias toward $\pi_h$ is the price of this freedom.

The principal direction for future work is the multivariate setting --- the full
Hadamard--Langevin dynamics for the posterior $\pi(d\vec u,d\vec v)\propto
\exp(-\beta G(\vec u\odot \vec v))\,\pi_0(d\vec u,d\vec v)$ of
\cref{sec:motivation} --- and we close by arguing that the one-dimensional
problem solved here is its crux. Under $G\equiv0$, the prior $\pi_0$ is a product,
so the $\vec u$-diffusion is \emph{separable}, a system of $d$ independent sticky
CIR coordinates, and the potential couples them only through the drift $\nabla G$. The
genuinely new analytical difficulty is the accessible sticky boundary: the atom
that delivers sparsity, and the logarithmically enhanced bias it induces, are
codimension-one effects, carried in $d$ dimensions by the faces $\{u_i=0\}$,
where the dynamics normal to a face is exactly the one-dimensional sticky CIR
treated here. We therefore expect the leading multivariate bias to be a sum of
per-face contributions built from the constant $K_\star$ of
\cref{thm:sharp-rate}, with $G'(0)$ replaced by the boundary drift normal to
each face, while the smooth tangential coupling through $G$ is an interior
effect governed by the classical multidimensional Langevin theory. What remains is thus the
\emph{coupling} of the $\vec u$ and $\vec v$ components and the per-face
bookkeeping, not the boundary analysis, which the present paper supplies; and the
applied payoff, genuine coordinate-wise sparsity, is already delivered by the
one-dimensional atom replicated across coordinates.
\appendix
\section{Technical lemmas and proofs for the ULA bias bound}%
\label{app:ula-bias}

\setcounter{theorem}{0}
\renewcommand{\thetheorem}{A.\arabic{theorem}}
We collect several technical lemmas used in our study of the ULA method.

\subsection{Moment bound}
We establish a Foster--Lyapunov drift condition for the ULA kernel, which provides a uniform-in-$h$ moment bound for the ULA chain under a one-sided Lipschitz condition on $G$.

\begin{lemma}[Foster--Lyapunov condition for the ULA kernel]
  \label[lemma]{lem:lyapunov}
  Consider a continuously differentiable function $G\colon [0,\infty)\to\real$ such that  $|G'(x)|\le C(1+x)$ and
  \begin{equation}\label{eq:one-sided}
    x\,G'(x)\ge -c_0 x^2 - c_1, \qquad x\ge 0,
  \end{equation}
  for constants $C,c_1\ge 0$ and $c_0\in[0,\lambda)$.  Set $V(x)=x^2$ and
  $\kappa = c_0+C^2$.   For every $h\le h_0 \coloneqq \lambda/(2\kappa\alpha)$,
  the ULA kernel $K_h$ satisfies
  \[
    K_h V(x)  \le  (1-\gamma)\,V(x) + b, \qquad x\ge 0,
  \]
  with $\gamma = \lambda/(\alpha+2\lambda)>0$ and
  $b = \bigl[\alpha h(2c_1+2C^2)+2\delta/\beta\bigr]/(\alpha+2\lambda)<\infty$,
  both independent of $h\le h_0$ and of the initial condition $u_0$.
  Iterating, for every $n\ge0$,
  \begin{equation}\label{eq:lyapunov-geom}
    K_h^n V(x)\ \le\ (1-\gamma)^n V(x)+\frac b\gamma ;
  \end{equation}
  in particular, \cref{ass:moment} holds with $M = b/\gamma$.
\end{lemma}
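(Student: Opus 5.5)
The plan is to factor the kernel as $K_h=R_\alpha\circ\phi_h$, where $R_\alpha=\alpha(\alpha-\mathcal L_1)^{-1}$ is the resolvent of the potential-free sticky CIR process (\cref{def:resolvent}). Then $K_hV(x)=(R_\alpha V)(\phi_h(x))$, and the drift inequality splits into two estimates: a resolvent estimate for $R_\alpha V$, and a growth estimate for $V\circ\phi_h$.

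\emph{Step 1 (generator of $V$).} For $V(x)=x^2$ on the interior,
\[
\mathcal L_1V(x)=\tfrac{2(\delta-1)}{\beta}+\tfrac{2}{\beta}-2\lambda x^2=\tfrac{2\delta}{\beta}-2\lambda V(x).
\]
At the atom, the sticky generator is $\mathcal L_1V(0)=\mu\,\tfrac{dV}{ds}(0^+)=\mu\lim_{x\downarrow0}2x^{\delta}e^{-\lambda\beta x^2/2}=0$, and $0\le 2\delta/\beta-2\lambda V(0)$. Hence $\mathcal L_1V\le 2\delta/\beta-2\lambda V$ on all of $[0,\infty)$.

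\emph{Step 2 (resolvent estimate).} Let $T\sim\operatorname{Exp}(\alpha)$ be independent of the process. Dynkin's formula at the random time $T$, together with the identity $\mathbb E_s\int_0^T g(u_t)\,dt=\alpha^{-1}R_\alpha g(s)$, gives
\[
R_\alpha V(s)-V(s)=\alpha^{-1}R_\alpha(\mathcal L_1V)(s)\le\alpha^{-1}\bigl(2\delta/\beta-2\lambda R_\alpha V(s)\bigr).
\]
Rearranging yields
\[
R_\alpha V(s)\le\frac{\alpha V(s)+2\delta/\beta}{\alpha+2\lambda}.
\]
This is the step I expect to be the main technical obstacle, because $V$ is unbounded. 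I would localise by applying Dynkin up to $T\wedge\tau_N$, where $\tau_N$ is the exit time from $[0,N]$. The atom causes no difficulty: by the Wentzell condition, $V$ lies in the local domain with zero scale flux at $0$. I would then pass $N\to\infty$ by Fatou's lemma, using finiteness of second moments of the sticky CIR process; these follow from the Gaussian-tail Lyapunov bound quoted in \cref{thm-exist-potential}, or from the time-change representation $u_t=\tilde u_{T_t}$ with $\tilde u^2$ a squared Bessel process.

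\emph{Step 3 (Euler map).} Off the boundary layer, $\phi_h(x)=x-hG'(x)$, and the one-sided condition \eqref{eq:one-sided} with $|G'|\le C(1+x)$ gives
\[
\phi_h(x)^2=x^2-2hxG'(x)+h^2G'(x)^2\le x^2(1+2hc_0+2h^2C^2)+2hc_1+2h^2C^2 .
\]
On the layer, the clamp satisfies $0\le\phi_h(x)\le\max(x,|x-hG'(x)|)$. For the hard clamp this is immediate; for the analysis clamp \eqref{eq:clamped-kernel} it follows because $\chi_h\in[0,1]$. So the same bound holds there. Taking $h\le1$, we get $V(\phi_h(x))\le(1+2h\kappa)V(x)+h(2c_1+2C^2)$ with $\kappa=c_0+C^2$.

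\emph{Step 4 (combine and iterate).} Substituting Step 3 into Step 2,
\[
K_hV(x)\le\frac{\alpha(1+2h\kappa)V(x)+\alpha h(2c_1+2C^2)+2\delta/\beta}{\alpha+2\lambda}.
\]
The restriction $h\le\lambda/(2\kappa\alpha)$ makes $2\alpha h\kappa\le\lambda$, so the coefficient of $V$ is at most $(\alpha+\lambda)/(\alpha+2\lambda)=1-\gamma$. The constant term is exactly $b$, which stays bounded because $\alpha h=1$ in the mixed-time scheme. Induction on $n$ gives
\[
K_h^nV\le(1-\gamma)^nV+b\sum_{k<n}(1-\gamma)^k\le(1-\gamma)^nV+b/\gamma,
\]
and letting $n\to\infty$ yields \cref{ass:moment} with $M=b/\gamma$.
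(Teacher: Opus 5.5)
Your proposal is correct and follows the paper's proof: a second-moment bound for the driftless sticky CIR at the exponential time $T_\alpha$, the one-sided Euler estimate $V(\phi_h(x))\le(1+2\kappa h)V(x)+h(2c_1+2C^2)$, combination under $2\kappa\alpha h\le\lambda$, and iteration. Your generator-based Step~2 is in fact more accurate than the paper's, which asserts the exact identity $E_y[u_t^2]=e^{-2\lambda t}y^2+\frac{\delta}{\lambda\beta}(1-e^{-2\lambda t})$; since $\mathcal L_1V(0)=0$ at the atom, the sticky process only satisfies $\frac{d}{dt}E_y[u_t^2]=\frac{2\delta}{\beta}P_y(u_t>0)-2\lambda E_y[u_t^2]$, so your inequality $R_\alpha V\le(\alpha V+2\delta/\beta)/(\alpha+2\lambda)$ is the true statement, and it is all the lemma needs.
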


\begin{proof}
  For the sticky CIR process with $G=0$ starting at $y\ge 0$, the function
  $t\mapsto e^{2\lambda t}E_y[u_t^2]-\delta/(\lambda\beta)$ is a martingale, giving
  \[
    E_y[u_t^2]  =  e^{-2\lambda t}y^2 + \frac{\delta}{\lambda\beta}(1-e^{-2\lambda t}).
  \]
  Averaging over $T_\alpha\sim\operatorname{Exp}(\alpha)$, and using
  $E[e^{-2\lambda T_\alpha}]=\alpha/(\alpha+2\lambda)$,
  \begin{equation}\label{eq:resolvent-moment}
    E_y[u_{T_\alpha}^2]
     = \frac{\alpha}{\alpha+2\lambda}\,y^2
      +\frac{2\delta}{\beta(\alpha+2\lambda)}.
  \end{equation}

  For $x \in \mathcal{R}_h^c$ (i.e., $x_1 \coloneqq x - G'(x)h > 0$), the ULA uses $y = x_1$ as starting point.
  The assumption~\cref{eq:one-sided} and $|G'(x)|\le C(1+x)$ give
  \[
    x_1^2 = x^2 - 2hxG'(x) + h^2 G'(x)^2
     \le  x^2(1+2c_0 h+2C^2 h^2) + h(2c_1+2C^2 h),
  \]
  where we used $G'(x)^2\le 2C^2(1+x^2)$.  Hence, with
  $\kappa=c_0+C^2$ and $h\le 1$,
  \[
    x_1^2  \le  (1+2\kappa h)\,x^2 + h(2c_1+2C^2).
  \]

  Substituting into~\cref{eq:resolvent-moment} with $y=x_1$:
  \[
    K_h V(x)
     \le \frac{\alpha(1+2\kappa h)}{\alpha+2\lambda}\,x^2 + b,
    \quad
    b = \frac{\alpha h(2c_1+2C^2)+2\delta/\beta}{\alpha+2\lambda}.
  \]
  For $h\le h_0=\lambda/(2\kappa\alpha)$ we have $2\kappa\alpha h\le\lambda$, so
  \[
    \frac{\alpha(1+2\kappa h)}{\alpha+2\lambda}
     \le \frac{\alpha+2\kappa\alpha h}{\alpha+2\lambda}
     \le 1-\frac{\lambda}{\alpha+2\lambda}
     \eqqcolon 1-\gamma.
  \]
  For $x=0$ or $x \in \mathcal{R}_h$, the ULA routes to the boundary kernel and draws directly from the resolvent at $0$, so
  $K_h V(x)=2\delta/[\beta(\alpha+2\lambda)]\le b$.

  \textit{Verification of \cref{ass:moment}.}
  Iterating the drift condition $K_h V \le (1-\gamma)V + b$ gives
  \[
    \mathbb{E}[u_n^2]
    = K_h^n V(u_0)
    \le (1-\gamma)^n u_0^2 + \frac{b}{\gamma},
  \]
  and hence $\limsup_{n\to\infty}\,\mathbb{E}[u_n^2] \le b/\gamma$
  since $(1-\gamma)^n\to 0$.  The bound $b/\gamma$ depends only on
  $\lambda,\beta,\delta,\mu,c_0,c_1$ and is independent of both $h\le h_0$
  and $u_0$.
\end{proof}

\subsection{Regularity} We record the regularity of solutions to the resolvent equation $(I - h\mathcal{L}_1)v = f$.

\begin{lemma}[resolvent regularity]\label[lemma]{lem:resolvent-regularity}
For every $h > 0$ and $f\in L^{\infty}([0,\infty))$,
$v = (I - h\mathcal{L}_1)^{-1} f$
satisfies $\|v\|_\infty \le \|f\|_\infty$ and $v\in C^2((0,\infty))$.  For
$\delta\in(1,2)$, $v$ has a boundary cusp of order $x^{2-\delta}$, and,  as $x\downarrow 0$,
\begin{equation}\label{eq:resolvent-cusp}
  v(x)-v(0)=c_v\,x^{2-\delta}+o(x^{2-\delta}),\qquad
  v'(x)=c_v(2-\delta)\,x^{1-\delta}+o(x^{1-\delta}),
\end{equation}
with $c_v$ generically nonzero. Note that $v$ is in general \emph{not} differentiable
at $0$ in the ordinary sense ($v'(0^+)=+\infty$). The
\emph{scale} derivative $dv/ds$ is finite at the boundary, and
\begin{equation}\label{eq:resolvent-flux}
  \frac{dv}{ds}(0^+)=\tfrac{\alpha}{\mu}\bigl(v(0)-f(0)\bigr)
\end{equation}
 when $f$ is continuous at $0$.
\end{lemma}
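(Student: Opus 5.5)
The plan is to write $v$ explicitly via the sticky resolvent kernel of \cref{lem:sticky-resolvent}, with $h=1/\alpha$:
\[
  v(x)=R_\alpha f(x)=\alpha\int_{[0,\infty)}G^{\rm sticky}_\alpha(x,y)f(y)\,m(dy),
\]
where $m(dy)=\mu^{-1}\delta_0(dy)+m'(y)\,dy$. This representation holds because $(I-h\mathcal L_1)^{-1}=\alpha(\alpha-\mathcal L_1)^{-1}$ is exactly the resolvent of \cref{def:resolvent}. The sup bound is then immediate. By \cref{thm:transition-dist}, $\alpha G^{\rm sticky}_\alpha(x,\cdot)\,m$ is a probability measure, so $v(x)=\mathbb E_x[f(u_{T_\alpha})]$ and $|v|\le\|f\|_\infty$.

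For interior regularity I would split at $y=x$:
\[
  v(x)=\frac{\alpha}{|\mathcal W|}\Bigl[f_\infty(x)\int_{[0,x]}f_0^{\rm sticky}f\,dm+f_0^{\rm sticky}(x)\int_x^\infty f_\infty f\,dm\Bigr].
\]
Both $f_0^{\rm sticky}$ and $f_\infty$ are smooth on $(0,\infty)$, being Kummer functions of $z_x$. The integrals are absolutely continuous in $x$, and their derivatives cancel in the first derivative of $v$ (the standard variation-of-parameters computation). This gives $v\in C^1$, and $v'$ is expressed through $f_\infty'$ and $(f_0^{\rm sticky})'$ times the same integrals. Differentiating once more gives $v''$ in terms of the ODE \cref{eq:kummer} and the jump term $f(x)$. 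Strictly, this yields $v\in C^2$ where $f$ is continuous, and $v'$ locally absolutely continuous with $\alpha v-\mathcal L_1 v=\alpha f$ a.e. in general. I would state the result as holding under continuity of $f$, or interpret $C^2$ in this a.e. sense. This caveat is the one point that needs care.

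The cusp comes from small-$x$ asymptotics. Since $M(a,b,z_x)=1+O(x^2)$, only the $U$ component is singular. By \cite[13.2.42]{Olver2010}, for $b\in(0,1)$,
\[
  U(a,b,z)=U_0+\frac{\Gamma(b-1)}{\Gamma(a)}z^{1-b}+O(z),
\]
and $z_x^{1-b}\propto x^{2-\delta}$. Hence $f_\infty(x)-f_\infty(0)\sim c\,x^{2-\delta}$ and $f_\infty'(x)\sim c(2-\delta)x^{1-\delta}$. Substituting into the split formula as $x\downarrow0$:
\begin{itemize}
  \item the first bracket tends to $\mu^{-1}f_0^{\rm sticky}(0)f(0)$, the atom contribution, plus $O(x^\delta)$;
  \item the second integral is continuous in $x$.
\end{itemize}
This gives \cref{eq:resolvent-cusp}, with $c_v$ an explicit linear combination of $f(0)$ and $\int f_\infty f\,dm$, generically nonzero. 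Differentiating the integrals contributes terms of order $x^{\delta-1}\cdot x^{1-\delta}$ that cancel exactly, so the derivative asymptotics follow as well.

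Finally, for the flux \cref{eq:resolvent-flux} there are two routes. The first is direct: use $v'/s'$ with $s'\sim x^{1-\delta}$ and \cref{lemma-green-zero}, namely $dM/ds(0^+)=0$ and $df_\infty/ds(0^+)=-|\mathcal W|$, together with the cancellation of the integral-derivative terms. The second, cleaner route reads the flux from the speed-measure form of the resolvent equation. Integrate $\alpha v-\mathcal L_1v=\alpha f$ against $m$ over $[0,\varepsilon]$. Because $\mathcal L_1 v=\frac{d}{dm}\frac{dv}{ds}$ and the atom carries mass $1/\mu$, this gives
\[
  \frac{dv}{ds}(\varepsilon)-0^{-}\text{-flux}=\int_{[0,\varepsilon]}\alpha(v-f)\,dm,
\]
with zero flux from the left. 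Letting $\varepsilon\downarrow0$ and using continuity of $f$ and $v$ at $0$ leaves $\frac{dv}{ds}(0^+)=\frac{\alpha}{\mu}(v(0)-f(0))$. This is consistent with \cref{lem:resolvent-bc}.

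The main obstacle I expect is justifying the behaviour at the atom: confirming that the speed-measure formulation holds with the atom $\mu^{-1}\delta_0$, and that the $O(x^\delta)$ integral terms vanish after dividing by $s'$. I would check both directly from the explicit kernel.
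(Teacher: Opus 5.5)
Your proposal is correct and follows the same route as the paper. The sup bound comes from the probabilistic representation $v(x)=\mathbb E_x[f(u_{T_\alpha})]$, the cusp from the $z^{1-b}$ term in the connection formula \cite[(13.2.42)]{Olver2010} for $U$, and the flux from reading $\mathcal L_1 v=\alpha(v-f)$ at the atom $m(\{0\})=1/\mu$, which is your second route; your split formula additionally supplies the explicit $c_v$ and the derivative asymptotics that the paper only asserts.

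Your caveat on interior regularity is right and sharpens the statement. For general $f\in L^\infty$ one only gets $v'$ locally absolutely continuous, because $v''=\beta\bigl(\alpha(v-f)-(\tfrac{\delta-1}{\beta x}-\lambda x)v'\bigr)$ jumps wherever $f$ does. Your direct flux route through $\tfrac{dM}{ds}(0^+)=0$, $\tfrac{df_\infty}{ds}(0^+)=-|\mathcal W|$ and the value of $c_\mu$ also checks out: it gives $\tfrac{dv}{ds}(0^+)=\tfrac{\alpha}{\mu}(v(0)-f(0))$ without invoking the generator domain at the atom, and in fact without needing continuity of $f$ at $0$.
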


\begin{proof}
The $L^\infty$ contraction $\|v\|_\infty \le \|f\|_\infty$ is standard since
$(I-h\mathcal{L}_1)^{-1}$ corresponds to averaging $f$ against the resolvent
kernel of the sticky CIR (a probability kernel for each $x$).  Smoothness of
$v$ on $(0,\infty)$ follows from the smoothness of the kernel away from the
diagonal.

With $z_x=\lambda\beta x^2/2$, $a=1/(2\lambda h)$, and $b=\delta/2\in(\tfrac12,1)$,
the resolvent kernel is built from the fundamental solutions $M(a,b,z_x)$ and
$U(a,b,z_x)$ of \cref{lem:sticky-resolvent}.  The function $M(a,b,z_x)$ is
analytic in $z_x\propto x^2$, which is smooth and even in $x$ with vanishing
$x$-derivative at $0$.  The second solution $U$ has
the connection-formula expansion \cite[(13.2.42)]{Olver2010}
$U(a,b,z)=\tfrac{\Gamma(1-b)}{\Gamma(a-b+1)}M(a,b,z)
+\tfrac{\Gamma(b-1)}{\Gamma(a)}\,z^{1-b}M(a-b+1,2-b,z)$, whose singular part is a
nonzero multiple of $z^{1-b}=(\lambda\beta/2)^{1-\delta/2}x^{2-\delta}$ (as
$\Gamma(\delta/2-1)\neq0$).  Thus, $v$ inherits a term $c_v\,x^{2-\delta}$, giving
\eqref{eq:resolvent-cusp}; the coefficient $c_v$ vanishes only for $f$ in the
codimension-one set annihilating the $U$-projection, so $v'(0^+)=+\infty$
generically.  For the flux, write the resolvent equation as
$\mathcal{L}_1 v=\alpha(v-f)$.  Although the two singular terms of
$\mathcal{L}_1 v=\tfrac1\beta v''+(\tfrac{\delta-1}{\beta x}-\lambda x)v'$ each
blow up like $x^{-\delta}$ on the cusp, they cancel:
$\mathcal{L}_1(x^{2-\delta})=-\lambda(2-\delta)x^{2-\delta}$ is bounded, so
$\mathcal{L}_1 v$ stays bounded as required.  Evaluating the generator at the
sticky point through the speed atom $m(\{0\})=1/\mu$ gives
$\mathcal{L}_1 v(0)=\mu\,\tfrac{dv}{ds}(0^+)$; equating with
$\alpha(v(0)-f(0))$ yields \eqref{eq:resolvent-flux}.
\end{proof}

\subsection{Routing region} The consistency estimate \cref{lem:boundary-charge} is proved directly in Appendix~\ref{app:smooth-bias}.  Here we record a one-step bound on the chance of landing in a near-boundary layer, used in the proofs of \cref{thm:ula-weak-conv,lem:local-tv}; in particular it bounds the mass of the routing region
  \(
    \mathcal{R}_h = \{x > 0\colon  x \leq G'(x)h\},
  \)
  the set of interior points for which the Euler step $x_1 = x - G'(x)h$
  exits the domain.

\begin{lemma}\label[lemma]{lem:density-bound}
Let $\delta \in (1,2)$ and $\alpha=1/h$.  There are $C,h_0>0$, depending only on the model parameters, such that the driftless kernel's one-step probability of landing in a near-boundary layer is bounded uniformly in the starting point,
\begin{equation}\label{eq:layer-onestep}
  \sup_{x\ge0} P_x^{G=0}\bigl(u_{T_\alpha}\in(0,\epsilon]\bigr)\ \le\ C\,\epsilon^\delta,
  \qquad 0<\epsilon\le 2\sqrt h,\ \ h\le h_0 .
\end{equation}
Consequently, if $G'\in C_{\mathrm b}([0,\infty))$, then $\mathcal{R}_h\subseteq(0,\|G'\|_\infty h]$ gives the one-step routing bound $\sup_{x\ge0}K_h(x,\mathcal{R}_h)=O(h^\delta)$; and for any stationary distribution $\pi_h$ of $K_h$ (\cref{thm:ula-exists}), stationarity gives $\pi_h(\mathcal{R}_h)\le\sup_{x\ge0}K_h(x,\mathcal{R}_h)=O(h^\delta)$.
\end{lemma}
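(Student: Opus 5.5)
The plan is to work directly from the explicit mixture representation of the driftless resolvent kernel, \cref{thm:transition-dist}, with $\alpha=1/h$.  For $0<\epsilon\le 2\sqrt h$ I would write
\[
  P_x^{G=0}\bigl(u_{T_\alpha}\in(0,\epsilon]\bigr)=\alpha\int_0^\epsilon G^{\rm sticky}_\alpha(x,y)\,m'(y)\,dy .
\]
Since $m'(y)=\beta y^{\delta-1}e^{-\lambda\beta y^2/2}\le \beta y^{\delta-1}$, the statement reduces to a uniform kernel bound
\[
  \sup_{x\ge0,\;0<y\le 2\sqrt h}\ G^{\rm sticky}_\alpha(x,y)\le C\,\alpha^{-1}.
\]
Integrating then gives $\alpha\cdot C\alpha^{-1}\int_0^\epsilon\beta y^{\delta-1}dy=C\epsilon^\delta$.

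To establish the kernel bound, I would use the product form $G^{\rm sticky}_\alpha(x,y)=f_0^{\rm sticky}(x\wedge y)\,U(a,b,z_{x\vee y})/|\mathcal W|$ from \cref{lem:sticky-resolvent}.  Two monotonicity facts do most of the work: $U(a,b,\cdot)$ is decreasing with $U\le U_0$, and $f_0^{\rm sticky}$ is positive and increasing, since it solves $(\alpha-\mathcal L)f=0$ with nonnegative scale flux at $0$ by \eqref{eq:wentzell}.

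\emph{Case $x\le y$.} Here $G^{\rm sticky}_\alpha(x,y)\le f_0^{\rm sticky}(y)U(a,b,z_y)/|\mathcal W|$, so it suffices to bound the diagonal $G^{\rm sticky}_\alpha(y,y)$ for $y\le 2\sqrt h$.

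\emph{Case $x>y$.} Now $G^{\rm sticky}_\alpha(x,y)=f_0^{\rm sticky}(y)U(a,b,z_x)/|\mathcal W|\le f_0^{\rm sticky}(y)U(a,b,z_y)/|\mathcal W|=G^{\rm sticky}_\alpha(y,y)$, by monotonicity of $U$.

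So in both cases everything reduces to the diagonal: I need $\sup_{y\le 2\sqrt h}G^{\rm sticky}_\alpha(y,y)\le C h$.

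The diagonal bound is the main obstacle, because it requires large-$a$ asymptotics with $a=1/(2\lambda h)\to\infty$ in the regime $z_y\le 2\lambda\beta h = O(1/a)$.  The plan is to write $G^{\rm sticky}_\alpha\le G_\alpha+|c_\mu|\,U(a,b,z_y)^2/|\mathcal W|$ and bound each piece.

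\emph{The perturbation term.} Since $|c_\mu|\le 1/U_0$, this term is at most $U_0/|\mathcal W|=G_\alpha(0,0)$.  By \cref{lemma-green-zero} and Stirling, $G_\alpha(0,0)=\frac{\Gamma(1-b)\Gamma(a)}{\Gamma(1+a-b)\Gamma(b)\lambda\beta}(\lambda\beta/2)^b\asymp a^{b-1}\asymp h^{1-\delta/2}$.  This is not $O(h)$, so the crude bound $G\le C/\alpha$ fails.  Note, however, that in the target we only need $G\,m'$ integrated over $(0,\epsilon]$.  I would therefore redo the integral using $\sup G\le Ch^{1-\delta/2}$:
\[
  \alpha\int_0^\epsilon Ch^{1-\delta/2}\beta y^{\delta-1}\,dy=C h^{-\delta/2}\epsilon^\delta .
\]
This is still not $C\epsilon^\delta$, so the plain supremum over the diagonal is insufficient.

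\emph{Refined step.} The fix is to use the scaling structure.  In the variable $\xi=y\sqrt{\alpha\beta}$ the resolvent equation becomes, to leading order, the $\lambda=0$ Bessel-type resolvent $w''+\frac{\delta-1}{\xi}w'-w=0$, with an explicit kernel.  The occupation measure $\alpha G\,m'\,dy$ of $(0,\epsilon]$ is then a probability on the scale $\sqrt h$ whose density in $\xi$ is $O(\xi^{\delta-1})$ near $0$, uniformly in the start.  Pulling back, the mass of $(0,\epsilon]$ is $\lesssim(\epsilon/\sqrt h)^\delta$, and for $\epsilon\le 2\sqrt h$ this is bounded by $C\epsilon^\delta$ only if $h$-powers match.

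I expect the real content to be checking that the Kummer asymptotics ($U(a,b,z)$ and $M(a,b,z)$ for $a\to\infty$ with $az$ bounded, DLMF 13.8) give the correct power, perhaps using $\epsilon\le2\sqrt h$ to absorb the constants.  I would try this first, computing the uniform asymptotic bound on $\sup_x\alpha G^{\rm sticky}_\alpha(x,y)$ for $y\lesssim\sqrt h$ via Bessel-function approximations of $M$ and $U$.

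\emph{Consequences.} Once \eqref{eq:layer-onestep} is in hand, the rest is immediate.  If $x\le G'(x)h$ then $x\le\|G'\|_\infty h\le2\sqrt h$, so $K_h(x,\mathcal R_h)=P_{\phi_h(x)}^{G=0}(u_{T_\alpha}\in\mathcal R_h)\le C(\|G'\|_\infty h)^\delta$, uniformly in $x$.  Integrating against $\pi_h$ and using $\pi_h=\pi_hK_h$ then gives $\pi_h(\mathcal R_h)\le\sup_xK_h(x,\mathcal R_h)=O(h^\delta)$.
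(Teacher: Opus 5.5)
Your diagonal reduction is correct, and in fact sharp. Since $f_0^{\rm sticky}$ is positive and increasing and $U(a,b,\cdot)$ is decreasing, $\sup_{x\ge0}G^{\rm sticky}_\alpha(x,y)=G^{\rm sticky}_\alpha(y,y)$, attained at $x=y$. (Also, $c_\mu<0$, so $G^{\rm sticky}_\alpha\le G_\alpha$ outright; the rank-one term needs no upper bound.) The genuine gap is the ``refined step''. It cannot be closed, because \eqref{eq:layer-onestep} is false on most of its stated range.

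To see this, insert the connection formula \cite[(13.2.42)]{Olver2010} into $f_0^{\rm sticky}=M+c_\mu U$ and use $c_\mu U_0=-(1-p_{\rm leave})$. This gives exactly
\[
  f_0^{\rm sticky}(y)=p_{\rm leave}\,M(a,b,z_y)+(1-p_{\rm leave})\,\frac{\Gamma(b)\,\Gamma(1+a-b)}{\Gamma(2-b)\,\Gamma(a)}\,z_y^{1-b}\,M(a-b+1,2-b,z_y).
\]
On the layer $az_y=\beta y^2/(4h)\le\beta$, the second term is of order $(az_y)^{1-b}\asymp(y^2/h)^{1-\delta/2}$. It dominates $p_{\rm leave}\asymp h^{\delta/2}$ as soon as $y\gg h^{1/(2-\delta)}$. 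Combined with $\alpha U_0/|\mathcal W|\asymp h^{-\delta/2}$ and $U(a,b,z_y)\asymp U_0$ on the layer, this yields $\alpha G^{\rm sticky}_\alpha(y,y)\,m'(y)\asymp y^{\delta-1}+y/h$. Evaluating at the starting point $x=\epsilon$ gives the matching lower bound, so
\[
  \sup_{x\ge0}P^{G=0}_x\bigl(u_{T_\alpha}\in(0,\epsilon]\bigr)\asymp\epsilon^\delta+\epsilon^2/h,\qquad 0<\epsilon\le2\sqrt h .
\]
This is $O(\epsilon^\delta)$ only for $\epsilon\le Ch^{1/(2-\delta)}$, a scale far below $h$. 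At $\epsilon=2\sqrt h$ it is of order one: start at $x=\sqrt h$; with probability bounded below, $T_\alpha$ is so short that the path neither reaches $0$ nor leaves $(0,2\sqrt h]$. At $\epsilon=\|G'\|_\infty h$ it is of order $h\gg h^\delta$. Since $\phi_h$ maps onto $[0,\infty)$, the uniform routing bound $\sup_xK_h(x,\mathcal R_h)=O(h^\delta)$ also fails when $G'(0)>0$.

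Your scaling heuristic points the wrong way. On the $\sqrt h$ scale the effective stickiness is $\mu h^{\delta/2}\to0$, so the process is nearly absorbed at $0$. The landing density near the boundary is then that of the killed process, linear in $\xi$ rather than $\xi^{\delta-1}$, and this is exactly the $\epsilon^2/h$ term.

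You have not missed a trick. The paper's proof reaches $C\epsilon^\delta$ by asserting $f_0^{\rm sticky}(x_1\wedge y)=p_{\rm leave}+O(h)\le Cp_{\rm leave}$ on the layer, via a Taylor expansion of $M$ and $U$ at $z=0$. That expansion drops the non-analytic $z^{1-b}$ part of $U$, whose coefficient relative to $U_0$ is of order $a^{1-b}$. So the step you could not complete is precisely where the paper's argument fails.

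What your reduction does prove, once the formula above is inserted, is the corrected and sharp envelope $C(\epsilon^\delta+\epsilon^2/h)$. For starting points $x\gg\sqrt h$ the factor $U(a,b,z_x)/U_0$ is exponentially small in $x/\sqrt h$, so the $\epsilon^2/h$ part matters only for starts within $O(\sqrt h)$ of the boundary. The stationary consequence $\pi_h(\mathcal R_h)=O(h^\delta)$ may therefore still hold. However, it would have to come from integrating this $x$-dependent envelope against $\pi_h$, which needs separate control of $\pi_h$ on $(0,C\sqrt h]$, not from the uniform supremum.
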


\begin{proof}
Fix $0<\epsilon\le2\sqrt h$.  From the resolvent formula \cref{eq:trans} and the sticky resolvent formula (\cref{lem:sticky-resolvent}), for any $x_1\geq 0$,
\[
  P_{x_1}^{G=0}(u_{T_\alpha}\in(0,\epsilon])
  = \int_0^\epsilon \alpha\,G^{\rm sticky}_\alpha(x_1,y)\,m'(y)\,dy,
\]
where \(
  G^{\rm sticky}_\alpha(x_1, y)
   =  {f_0^{\rm sticky}(x_1\wedge y)\,U(a,b,z_{x_1\vee y})}/{|\mathcal{W}|}.
\)
We bound the two factors $U(a,b,z_{x_1\vee y})$ and $f_0^{\rm sticky}(x_1\wedge y)$ separately, uniformly in $h\le h_0$ and $x_1\ge 0$, on the integration range $y\in(0,\epsilon]$.  Since $U(a,b,\cdot)$ is positive and decreasing on $[0,\infty)$ (\cref{rem:exit-sampler}),
\(  U(a,b,z_{x_1\vee y})  \le  U(a,b,0)  =  U_0\).
The argument $x_1\wedge y$ lies in the boundary layer $[0, \epsilon]$, since $x_1\wedge y\le y\le\epsilon$, so we expand $f_0^{\rm sticky}$ around the boundary.  At the boundary, \cref{eq:f0sticky-zero} gives
\[
  f_0^{\rm sticky}(0)  =  p_{\rm leave}  =  \frac{\mu|\mathcal{W}|}{\mu|\mathcal{W}|+\alpha U_0}.
\]
With $\alpha=1/h$, $a = \alpha/(2\lambda)$, $b=\delta/2$, $z_y = \lambda\beta y^2/2$, the expansion parameter on the layer is $a\,z_y = \beta y^2/(4h)\le\beta\epsilon^2/(4h)\le\beta$ (using $\epsilon\le2\sqrt h$), which keeps the Taylor remainder of $M(a,b,z) = 1 + (a/b)z + \cdots$ uniform there (large $a$ balanced by small $z_y$).  The bound $|c_\mu|U_0\le 1$ on the second coefficient of $f_0^{\rm sticky}(x) = M(a,b,z_x) + c_\mu U(a,b,z_x)$ keeps the $U$-contribution under control.  Taylor expansion at $z=0$ then yields
\(  f_0^{\rm sticky}(x_1\wedge y)  =  p_{\rm leave} + O(h)\),
uniformly for $h\le h_0$. Since  $p_{\rm leave} = \mu|\mathcal{W}|h/(\mu|\mathcal{W}|h + U_0) \asymp h^{\delta/2}$ as $h\to 0$, the remainder is absorbed into the leading term:
\(
  f_0^{\rm sticky}(x_1\wedge y)  \le  C\,p_{\rm leave}
\)
for some constant $C$ independent of $h$ and $x_1$.  Substituting the two factor bounds and then the explicit form of $p_{\rm leave}$,
\[
  \alpha\,G^{\rm sticky}_\alpha(x_1,y)
   \le  \frac{C\,\alpha\,p_{\rm leave}\,U_0}{|\mathcal{W}|}
   =  \frac{C\,\alpha\,\mu\,U_0}{\mu|\mathcal{W}|+\alpha U_0}
   \le  C\mu,
\]
where the last step bounds the denominator from below by $\alpha U_0$.
Using $m'(y) = \beta y^{\delta-1}e^{-\lambda\beta y^2/2} \leq \beta y^{\delta-1}$,
\[
  P_{x_1}^{G=0}(u_{T_\alpha}\in(0,\epsilon])
   \leq  C\mu \int_0^\epsilon \beta y^{\delta-1}\,dy
   =  \frac{C\mu\beta}{\delta}\,\epsilon^\delta ,
\]
uniformly in $x_1 \geq 0$, which is \eqref{eq:layer-onestep}.

For the consequences, $K_h(x,\cdot)=P^{G=0}_{\phi_h(x)}(u_{T_\alpha}\in\cdot)$ with $\phi_h(x)\ge0$, so for $h$ small enough that $\|G'\|_\infty h\le2\sqrt h$, taking $\epsilon=\|G'\|_\infty h$ in \eqref{eq:layer-onestep} with $\mathcal{R}_h\subseteq(0,\|G'\|_\infty h]$ gives $\sup_{x}K_h(x,\mathcal{R}_h)\le C(\|G'\|_\infty h)^\delta=O(h^\delta)$; and $\pi_h(\mathcal{R}_h)=\pi_h(K_h(\cdot,\mathcal{R}_h))\le\sup_x K_h(x,\mathcal{R}_h)$ by stationarity.
\end{proof}

\subsection{Consistency assumption}\label{app:smooth-bias}

\begin{proof}[Proof of \cref{lem:boundary-charge}]\label{app:smooth-consistency}
We bound the three parts of $\|\nu_h\|_{V}=|\nu_h(\{0\})|+\int_0^\infty(1+x^2)|r(x)|\,dx$ separately: the atom defect, the far field $(1,\infty)$, and the near-boundary part $(0,1)$.  Throughout, $w\coloneqq d\pi/d\pi_0=e^{-\beta G}/Z$ (\cref{thm-inv-potential}), $\rho_0$ is the density of $\pi_0$, and $A(z)\coloneqq R_h(z,\{0\})=-c_\mu\,U(a,b,z_z)=(1-p_{\rm leave})\,U(a,b,z_z)/U(a,b,0)$ is the $G=0$ atom-hitting probability (\cref{thm:transition-dist}), with  $z_z=\lambda\beta z^2/2$, $a=\alpha/(2\lambda)=1/(2\lambda h)$, $b=\delta/2$.  Recall the  Laplace transform and derivative formula \cite[(13.10.7), (13.3.22)]{Olver2010} for $U$:
\begin{equation}\label{eq:kummer-moment}
 \! \int_0^\infty e^{-z}z^{s-1}U(a,b,z)\,dz=\frac{\Gamma(s)\Gamma(s+1-b)}{\Gamma(a+s+1-b)},
  \;\,
  \frac{d U(a,b,z)}{dz}=-aU(a+1,b+1,z).
\end{equation}

\paragraph{Atom: $|\nu_h(\{0\})|\le Ch^\delta$}
Since $\pi_0$ is $R_h$-invariant (\cref{thm-ergodicity}), its interior and atom masses obey $\int_0^\infty A(y)\,\rho_0(y)\,dy=\pi_0(\{0\})(1-A(0))$.  As $\phi_h(0)=0$, the pushforward-atom mass separates into an atom and an interior part,
\[
  (\pi K_h)(\{0\})=\int_{[0,\infty)}A(\phi_h(y))\,d\pi(y)=A(0)\,\pi(\{0\})+\int_0^\infty A(\phi_h(y))\,w(y)\,\rho_0(y)\,dy.
\]
Subtract $\pi(\{0\})=w(0)\,\pi_0(\{0\})$ and replace $w(0)(1-A(0))\,\pi_0(\{0\})$ via the invariance identity, to find
\begin{equation}\label{eq:atom-closed}
  \nu_h(\{0\})=(\pi K_h)(\{0\})-\pi(\{0\})=\int_0^\infty\bigl[A(\phi_h(y))\,w(y)-A(y)\,w(0)\bigr]\rho_0(y)\,dy.
\end{equation}
This vanishes when $G\equiv0$ ($w$ constant, $\phi_h$ is the identity).  For $G\not\equiv0$, split the integrand into a transport and a weight part,
\[
  A(\phi_h(y))w(y)-A(y)w(0)
   =\underbrace{[A(\phi_h(y))-A(y)]\,w(y)}_{\text{transport}}
   +\underbrace{A(y)\,[w(y)-w(0)]}_{\text{weight}} .
\]
 On the layer $(0,L_h]$, $A(\phi_h)\vee A(y)\le A(0)$ and so $|A(\phi_h)w(y)-A(y)w(0)|\le A(0)(w(y)+w(0))$, which contributes at most $C\,A(0)\,\pi\bigl((0,L_h]\bigr)=O(h^\delta)$.  Off the layer, $\phi_h(y)-y=-hG'(y)$ and $w(y)-w(0)=w'(0)y+O(y^2)$ with $w'(0)=-\beta G'(0)w(0)$. A Taylor expansion in $h$ and in $y$ gives
\begin{gather}
  \begin{split}
  \int_{L_h}^\infty[A(\phi_h(y))-A(y)]\,w(y)\,\rho_0(y)\,dy&=I_{\mathrm{tr}}+O\bigl(h^{1+\delta/2}\bigr),\\
  \int_{L_h}^\infty A(y)\,[w(y)-w(0)]\,\rho_0(y)\,dy&=I_{\mathrm{wt}}+O\bigl(h^{1+\delta/2}\bigr),
\end{split}
\label{eq:tr-wt-expand}
\end{gather}
with leading parts of size $h^{(\delta+1)/2}$
\[
  I_{\mathrm{tr}}=-h\,G'(0)\,w(0)\!\int_0^\infty\! A'(y)\,\rho_0(y)\,dy,
  \qquad
  I_{\mathrm{wt}}=-\beta\,G'(0)\,w(0)\!\int_0^\infty\! A(y)\,y\,\rho_0(y)\,dy.
\]
Here,  the $O(h^{1+\delta/2})$ remainders include the drift variation $h\,(G'(y)-G'(0))\,A'(y)$ and the second-order terms $\tfrac12h^2G'(0)^2A''$, $\tfrac12w''(0)y^2$ (in $(G')^2$ and $G''$).  By \eqref{eq:kummer-moment}, $A'(y)=c_\mu\,a\lambda\beta y\,U(a+1,b+1,z_y)$, and the substitution $z=z_y$ sends $\rho_0(y)\,dy$ to a multiple of $z^{b-1}e^{-z}\,dz$ and $y$ to a multiple of $z^{1/2}$.  Each integral then reduces, via \eqref{eq:kummer-moment} at $s=b+\tfrac12$ --- with kernel $U(a,b,\cdot)$ for $I_{\mathrm{wt}}$ and $U(a+1,b+1,\cdot)$ for $I_{\mathrm{tr}}$ --- to a common factor $F$ times a $\Gamma$-ratio.  Using $\Gamma(\tfrac32)=\tfrac12\Gamma(\tfrac12)$,
\[
  I_{\mathrm{wt}}=-\tfrac12\,\Gamma(\tfrac12)\,F,
  \qquad
  I_{\mathrm{tr}}=h a\lambda\,\Gamma(\tfrac12)\,F,
  \qquad\text{so}\qquad
  I_{\mathrm{tr}}+I_{\mathrm{wt}}=\Gamma(\tfrac12)\,F\,\bigl(h a\lambda-\tfrac12\bigr)=0,
\]
because $ha\lambda=\tfrac12$ ($a=1/(2\lambda h)$). We have shown the leading $h^{(\delta+1)/2}$ cancels exactly.  By \eqref{eq:tr-wt-expand}, $\nu_h(\{0\})$ is then the sum of the layer term $O(h^\delta)$ and the off-layer remainder $O(h^{1+\delta/2})$.  Since $1+\delta/2>\delta$ for $\delta<2$,  $|\nu_h(\{0\})|\le Ch^\delta$.

\paragraph{Far field: $\int_1^\infty(1+x^2)|r(x)|\,dx\le Ch^2$}
This is the classical interior estimate, with the boundary playing no role.  On $[1,\infty)$ the kernel acts as the smooth map $R_h\circ\phi_h$ --- no atom, layer, or cusp.  Since $\pi$ is invariant for the exact sticky-CIR semigroup $P_h\coloneqq e^{h\mathcal L}$ (so $\pi P_h=\pi$), the defect is $\nu_h=\pi K_h-\pi=\pi(K_h-P_h)$.  On the interior, $K_h$ is a first-order splitting of $P_h$: the resolvent satisfies $R_h=e^{h\mathcal L_1}+O(h^2)$ and composes with the transport step to give $K_h-P_h=O(h^2)$, in the density-level sense established for the non-sticky Euler scheme by \citet[Cor.~2.1]{BallyTalay1996b}, whose density-expansion coefficients carry a Gaussian bound $\exp(-c\|x-y\|^2/T)$ that the defect $r$ inherits.  Contributions to $r(x)$ for $x>1$ from the $O(h^\delta)$ boundary region are exponentially small in $1/h$ (the resolvent kernel rarely connects the boundary to $\{x>1\}$ in one step).  Hence, $r(x)=O(h^2)$ on $[1,\infty)$, locally uniformly, and the $(1+x^2)$-weighted integral converges to give the stated bound.  

\paragraph{Near-boundary: $\int_0^1(1+x^2)|r(x)|\,dx\le Ch^\delta$}
As $1+x^2\le2$ on $[0,1]$, it suffices to bound $\int_0^1|r(x)|\,dx=\sup\{\nu_h(\psi):\psi\in C_{\mathrm c}^\infty(0,1),\ \snorm{\psi}_\infty\le1\}$.  Fix such a $\psi$ and set $\chi\coloneqq R_h\psi$; then $\snorm{\chi}_\infty\le1$ ($R_h$ is Markov), and by \cref{lem:resolvent-regularity} the product $\rho_0\chi'$ (equivalently $m'\chi'$) is bounded near $0$ and the scale-flux $\tfrac{d\chi}{ds}(0^+)$ is finite.

Since $\phi_h(0)=0$ and $\psi(0)=0$, the kernel identity $K_h\psi=\chi\circ\phi_h$ gives the exact decomposition (writing $\rho\coloneqq w\rho_0$ for the interior density of $\pi$)
\begin{equation}\label{eq:incr-telescope}
  \nu_h(\psi)=\pi(\{0\})\,\chi(0)+\int_0^\infty\bigl(\chi\circ\phi_h-\psi\bigr)\rho\,dx .
\end{equation}
The atom charge is removed by the invariance $\pi_0R_h=\pi_0$ of the $G=0$ kernel (\cref{thm-ergodicity}): evaluating $\pi_0(R_h\psi)=\pi_0(\psi)$ on $\pi_0=\pi_0(\{0\})\delta_0+\rho_0\,dx$ with $\psi(0)=0$ gives $\pi_0(\{0\})\chi(0)=\int_0^\infty(\psi-\chi)\rho_0\,dx$, hence $\pi(\{0\})\chi(0)=w(0)\int_0^\infty(\psi-\chi)\rho_0\,dx$.  Substituting into \eqref{eq:incr-telescope} and splitting $\chi\circ\phi_h-\psi=(\chi\circ\phi_h-\chi)+(\chi-\psi)$, the $w(0)$ term combines with the $(\chi-\psi)\rho=(\chi-\psi)w\rho_0$ part to leave
\begin{equation}\label{eq:key-defect}
  \nu_h(\psi)=\int_0^\infty(\chi-\psi)\,(w-w(0))\,\rho_0\,dx
    +\int_0^\infty(\chi\circ\phi_h-\chi)\,\rho\,dx .
\end{equation}
\emph{First-order term.}  In the first integral, $\chi-\psi=h\mathcal{L}_1\chi$. In the scale--speed form \eqref{eq:generator-scale-speed}, $\mathcal{L}_1\chi=\frac{d}{dm}\frac{d\chi}{ds}=\frac{1}{m'}\frac{d}{dx}\bigl(\chi'/s'\bigr)$ and $s'm'=\beta$ by \eqref{eq:scale-speed-cir}. Using $\rho_0\propto m'$, this gives $(\mathcal{L}_1\chi)\,\rho_0=\tfrac1\beta(\rho_0\chi')'$.  Integrating by parts,
\[
  \int_0^\infty(\chi-\psi)(w-w(0))\rho_0\,dx
   =\tfrac{h}{\beta}\Bigl[\rho_0\chi'\,(w-w(0))\Bigr]_0^\infty
    -\tfrac{h}{\beta}\!\int_0^\infty\!\rho_0\chi'\,w'\,dx
   =h\!\int_0^\infty\! G'\chi'\,\rho\,dx ,
\]
where the boundary term at $0$ vanishes because $\rho_0\chi'$ is bounded and $(w-w(0))(0)=0$ (and at $\infty$ by decay of $\rho_0$), and $w'=-\beta G'w$ with $\rho=w\rho_0$.  Thus,
\begin{equation}\label{eq:nu-residual}
  \nu_h(\psi)=h\!\int_0^\infty\! G'\chi'\,\rho\,dx+\int_0^\infty(\chi\circ\phi_h-\chi)\rho\,dx .
\end{equation}
On $(L_h,\infty)$, where $\phi_h(y)=y-hG'(y)$, Taylor's theorem gives $\chi\circ\phi_h-\chi=-hG'\chi'+\tfrac12h^2(G')^2\chi''+O(h^3\chi''')$, and the $-hG'\chi'$ term cancels the part of $h\int_0^\infty G'\chi'\rho$ over $(L_h,\infty)$; collecting,
\begin{equation}\label{eq:nu-three}
  \nu_h(\psi)=\underbrace{\tfrac12h^2\!\int_{L_h}^\infty\!(G')^2\chi''\rho\,dx}_{\text{bulk}}
   +\underbrace{\int_0^{L_h}\!\!\bigl[(\chi\circ\phi_h-\chi)+hG'\chi'\bigr]\rho\,dx}_{\text{layer}}
   +O\!\Bigl(h^3\!\int_{L_h}^\infty\!|\chi'''|\rho\,dx\Bigr).
\end{equation}
\emph{Bulk.}  Integrate by parts twice, moving both derivatives off the bounded factor $\chi$:
\[
  \tfrac12h^2\!\int_{L_h}^\infty\!(G')^2\chi''\rho
   =\tfrac12h^2\Bigl[(G')^2\rho\,\chi'-\bigl((G')^2\rho\bigr)'\chi\Bigr]_{L_h}^\infty
   +\tfrac12h^2\!\int_{L_h}^\infty\!\chi\,\bigl((G')^2\rho\bigr)''dx .
\]
As $((G')^2\rho)''=O(x^{\delta-3})$ near $0$ and $L_h=h\snorm{G'}_\infty$, the integral is $O(L_h^{\delta-2})=O(h^{\delta-2})$ and contributes $O(h^2\cdot h^{\delta-2})=O(h^\delta)$ via $\snorm{\chi}_\infty\le1$.  The boundary terms at $L_h$ are $O(h^\delta)$ too: the $\chi'$-term because $\rho\chi'$ is bounded (\cref{lem:resolvent-regularity}), so $(G')^2\rho\chi'|_{L_h}=O(1)$ and $\tfrac12h^2\cdot O(1)=O(h^2)$, and the $\chi$-term because $((G')^2\rho)'(L_h)=O(h^{\delta-2})$. \\\emph{Layer.}  On $(0,L_h]$, $|\chi\circ\phi_h-\chi|\le2\snorm{\chi}_\infty$ integrated against $\pi((0,L_h])=O(h^\delta)$ gives $O(h^\delta)$, while $|hG'\chi'\rho|\le h\snorm{G'}_\infty|\rho\chi'|$ with $\rho\chi'$ bounded, integrated over the length-$L_h$ layer, gives $O(h^2)$.\\  \emph{Remainder.}  $\chi'''=O(x^{-1-\delta})$ near $0$ (third derivative of the $x^{2-\delta}$ cusp), so $|\chi'''|\rho=O(x^{-2})$ and $h^3\int_{L_h}^\infty x^{-2}\,dx=O(h^3 L_h^{-1})=O(h^2)$.  

Hence, $\nu_h(\psi)=O(h^\delta)$ uniformly over $\snorm{\psi}_\infty\le1$, so the near-boundary bound $\int_0^1(1+x^2)|r(x)|\,dx\le 2\int_0^1|r(x)|\,dx\le Ch^\delta$ holds.

Combining the three parts, $\|\nu_h\|_V=|\nu_h(\{0\})|+\int_0^\infty(1+x^2)|r(x)|\,dx\le Ch^\delta$, which is \eqref{eq:boundary-charge}.
\end{proof}

\subsection{Geometric ergodicity}\label{app:contraction-nonconvex}

We show \cref{ass:contraction} holds for any potential with bounded gradient, by the classical route of a
Foster--Lyapunov drift together with a minorisation. 
Throughout this subsection, we assume
\begin{equation}\label{eq:nonconvex-hyp}
  G\in C^2([0,\infty)),\qquad \|G'\|_\infty\eqqcolon B<\infty.
\end{equation} 

\begin{proposition}[geometric ergodicity]\label[proposition]{lem:contraction-nonconvex}
  Under \eqref{eq:nonconvex-hyp}, \cref{ass:contraction} holds: the clamped kernel $K_h$ of
  \eqref{eq:clamped-kernel} is $V$-uniformly geometrically ergodic, uniformly in $h$, at rate
  $1-ch$ for some $c>0$, with $V(x)=1+x^2$.
\end{proposition}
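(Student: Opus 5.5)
The plan is a Harris-type argument run on the $m$-step kernel $K_h^{m}$ with $m\coloneqq\lceil 1/h\rceil$, that is, over one unit of diffusion time. I will combine a geometric drift condition for $K_h^m$ with a minorisation on a sublevel set of $V$, both with constants independent of $h$. Harris's theorem (in the quantitative form of Hairer--Mattingly, or \citet[Theorem~16.0.1]{MeynTweedie1993}) then gives $\|K_h^{mk}(x,\cdot)-\pi_h\|_V\le C V(x)\varrho^k$ with $\varrho<1$ independent of $h$. Writing $n=mk+j$ with $0\le j<m$, and using $K_h^jV\le C V$ from \eqref{eq:lyapunov-geom}, this yields $\|K_h^n(x,\cdot)-\pi_h\|_V\le C V(x)\varrho^{\lfloor nh\rfloor}\le C' V(x)(1-ch)^n$ with $c=-\tfrac12\log\varrho$. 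Existence and uniqueness of $\pi_h$ come out of the same theorem.

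\emph{Drift.} Under \eqref{eq:nonconvex-hyp}, \eqref{eq:one-sided} holds with $c_0=0$ and $c_1=B^2/4$ (via $xG'\ge -B x$), and $|G'|\le B$. Hence the computation of \cref{lem:lyapunov} applies to the analysis clamp \eqref{eq:clamped-kernel}, since $0\le\phi_h(x)\le x+hB$ and so $\phi_h(x)^2\le(1+h)x^2+C h$. The contraction factor $\alpha(1+2\kappa h)/(\alpha+2\lambda)$ there is, with $\alpha=1/h$, equal to $1-2\lambda h+O(h^2)$. So per step $K_hV\le(1-\gamma h)V+bh$ for some $\gamma>0$ and $b<\infty$ independent of $h$. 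Iterating over $m$ steps gives $K_h^mV\le e^{-\gamma}V+b/\gamma$, a genuine $h$-independent drift for the $m$-step kernel.

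\emph{Minorisation.} Fix $R$ large so that the sublevel set $\mathcal C=\{V\le R\}=[0,r]$ satisfies $2b/\gamma<(1-e^{-\gamma})R$. I must show there exist $\epsilon>0$ and a probability $\nu$, both independent of $h$, with $K_h^m(x,\cdot)\ge\epsilon\nu$ for all $x\in[0,r]$. My route is a comparison with the diffusion. By \cref{thm:ula-weak-conv}, $K_h^{m}(x,\cdot)\to P_1(x,\cdot)$ weakly, uniformly for $x\in[0,r]$, where $P_1$ is the time-one law of \eqref{eq-CIR-potential}. That law has a density on $(0,\infty)$, continuous and positive on compacts, which is bounded below on $[1,2]$ uniformly in $x\in[0,r]$ by the Feller property and irreducibility (\cref{thm-exist-potential}). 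Weak convergence alone does not give density lower bounds, so I will instead split off the last step. I write $K_h^m=K_h^{m-1}K_h$ and use the explicit density $\alpha G^{\rm sticky}_\alpha(\phi_h(z),y)m'(y)$ of $K_h(z,\cdot)$. For $z$ in a compact interior set $[\tfrac12,3]$ and $y\in[1,2]$ this density is bounded below by a constant $c_1>0$ uniform in $h$. The reason is that the Kummer-function kernel with $a=1/(2\lambda h)$ behaves like a Laplace-type kernel $\asymp h^{-1/2}e^{-|{\cdot}|/\sqrt h}$ concentrated at scale $\sqrt h$. This is the weak point: a single step spreads only over $\sqrt h$. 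I will therefore replace the final single step by the last $\lceil \theta/h\rceil$ steps for a fixed small $\theta$. The convolution of $\theta/h$ such kernels, each a mixture of Exp-time transitions of the $G=0$ sticky CIR started from Euler-shifted points, is bounded below by the $G=0$ transition density over time $\approx\theta$, after Girsanov-type comparison with bounded $G'$. Concretely, with Gaussian-type lower bounds for the interior CIR density away from $0$, the drift perturbation $hB$ per step accumulates to at most $\theta B$. The weak convergence of \cref{thm:ula-weak-conv} then guarantees $K_h^{m-\lceil\theta/h\rceil}(x,[\tfrac12,3])\ge\tfrac12 P_{1-\theta}(x,[\tfrac12,3])>0$ uniformly on $[0,r]$ for $h\le h_0$. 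Taking $\nu$ uniform on $[1,2]$ gives the minorisation.

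The main obstacle is this uniform-in-$h$ lower bound on the multi-step density over a time window of length $\theta$. I would first try to prove it by coupling: realise $\theta/h$ successive resolvent steps as the $G=0$ sticky CIR run for a Gamma$(\theta/h,1/h)$ random time, which concentrates at $\theta$, interleaved with deterministic shifts of total size at most $\theta B$. I would then bound the resulting density below using the continuity in $(t,x,y)$ of the $G=0$ transition density on $[\theta/2,2\theta]\times[\tfrac12,3]\times[1,2]$, absorbing the shifts by shrinking the target interval.
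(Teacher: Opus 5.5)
Your overall architecture is the paper's. You apply Harris's theorem to $P=K_h^{\lceil T/h\rceil}$ with an $h$-uniform drift obtained by iterating the one-step Lyapunov bound. The minorisation on $[0,M_0]$ comes from a Chapman--Kolmogorov split: \cref{thm:ula-weak-conv} plus the Feller property carries every $x\in[0,M_0]$ into a fixed interior set with probability bounded below, and a uniform-in-$h$ density lower bound handles the last $\approx\theta/h$ steps. Your unwinding via $K_h^jV\le CV$ is also fine.

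The drift paragraph has two slips, both easily fixed.
\begin{itemize}
\item \eqref{eq:one-sided} cannot hold with $c_0=0$ in general, since $-Bx$ is unbounded below.
\item With $\alpha=1/h$, the factor $\alpha(1+2\kappa h)/(\alpha+2\lambda)$ equals $1-2(\lambda-\kappa)h+O(h^2)$, not $1-2\lambda h+O(h^2)$.
\end{itemize}
Your direct estimate of $\phi_h(x)^2$ is the right route, but use $2hBx\le\lambda hx^2+hB^2/\lambda$. The factor is then $(1+\lambda h)/(1+2\lambda h)\le 1-\lambda h/2$. With $2hBx\le h(x^2+B^2)$ you get $(1+h)/(1+2\lambda h)$, which contracts only if $\lambda>1/2$.

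\textbf{The gap.} It is the step you flag yourself: you need $K_h^{\lceil\theta/h\rceil}(z,\cdot)\ge c\,\mathrm{Leb}|_{[1,2]}$ for $z\in[\tfrac12,3]$, uniformly in $h$, and the coupling sketch does not give it.
\begin{itemize}
\item The shifts $-hG'(X_k)$ are interleaved with the resolvent draws and depend on the current state. So the endpoint is not a $G=0$ Gamma-time draw translated by a fixed amount.
\item Grant the best case: a monotone coupling with the $G=0$ resolvent chain $\tilde X$ giving $|X_n-\tilde X_n|\le\theta B$ pathwise. This only sandwiches distribution functions, $\mathbb P(\tilde X_n\le y-\theta B)\le\mathbb P(X_n\le y)\le\mathbb P(\tilde X_n\le y+\theta B)$. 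Hence it bounds below only the mass of intervals longer than $2\theta B$, which is the kind of information weak convergence already supplies.
\item It says nothing about $K_h^n(z,A)$ for short intervals or general Borel $A\subset[1,2]$. A displacement of size $\theta B$ correlated with $\tilde X_n$ can concentrate mass onto a null set, and ``shrinking the target interval'' does not address this.
\item A Girsanov-type comparison at the level you describe does not rescue it. $R_h(x,\cdot)$ and $R_h(x-hG'(x),\cdot)$ live at scale $\sqrt h$, so they differ in total variation by order $\sqrt h$ per step; over $\theta/h$ steps this sums to order $\theta h^{-1/2}$.
\item The relative entropy is $O(h)$ per step, hence bounded over the window. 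But an entropy bound only gives $Q(A)\ge\exp(-C/P(A))$, which is not linear in $P(A)$ and so gives no minorisation.
\end{itemize}

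What is missing is a genuinely local, density-level estimate for the multi-step law at scales below $\theta B$, uniform in $h$. The paper obtains this in \cref{lem:scheme-density}:
\begin{itemize}
\item it extends the coefficients off an interval $J$ bounded away from $0$, so the diffusion is uniformly elliptic with bounded drift;
\item it invokes a discrete two-sided Gaussian (Aronson-type) bound for the $m$-step density, from Lemaire--Menozzi, transferred to $R_h\circ\phi_h$ via a Konakov--Mammen parametrix;
\item it removes the localisation by taking the diameter of $I\cup[a,b]$ and the time $T_\star$ small, so the Gaussian lower bound dominates the off-diagonal exit error.
\end{itemize}
Your proof needs an argument of this type, or a discrete likelihood-ratio estimate controlled conditionally on the endpoint, in place of the coupling sketch.
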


For the proof, we recall the drift condition established in \cref{lem:lyapunov} and prove a minorisation condition for $K_h$ with respect to $[0,M_0]$ in \cref{lem:interior-minorisation}. We first show a minorisation for interior compact sets (i.e., excluding initial data at the boundary point $\{0\}$).

\begin{lemma}[minorisation on interior]\label[lemma]{lem:scheme-density}
  Let $I,[a,b]\subset(0,\infty)$ be compact and let $T_\star>0$; assume \eqref{eq:nonconvex-hyp}. There
  are thresholds $\theta,T_0>0$, depending only on $\delta,\lambda,\beta$ and the position of
  $I\cup[a,b]$, such that if
  \begin{equation}\label{eq:bulk-geom}
    \operatorname{diam}(I\cup[a,b])\ \le\ \theta
    \qquad\text{and}\qquad
    T_\star\ \le\ T_0 ,
  \end{equation}
  then there exist $c_\star,h_2>0$ with
  \begin{equation}\label{eq:scheme-lower}
    K_h^{m}(x',\cdot)\ \ge\ c_\star\,\mathrm{Leb}|_{[a,b]},
    \qquad\text{ for $x'\in I,\ \ h\le h_2,\ \ |m-T_\star/h|\le1$.}
  \end{equation}
\end{lemma}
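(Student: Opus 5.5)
The plan is to exploit the explicit resolvent structure of $K_h = R_h\circ\phi_h$ and to compare $m\approx T_\star/h$ steps of the chain with the continuous diffusion over the interior window, where the process is a non-degenerate, smooth diffusion. Since $I\cup[a,b]$ is a compact subset of $(0,\infty)$, fix $d_0>0$ with $I\cup[a,b]\subset[2d_0,\infty)$ and work on the set $D=[d_0,\,\sup(I\cup[a,b])+d_0]$. The argument has three ingredients: a localisation bound, a density lower bound, and a conversion of that bound into a minorisation.

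\emph{Localisation.} On $D$ the Euler map is the plain step $\phi_h(x)=x-hG'(x)$, since $L_h=Bh<d_0$ for small $h$. I would show that, started from $x'\in I$, the chain stays in $D$ for all $m\le T_\star/h+1$ steps with probability at least $1-\varepsilon(T_0)$. Here $\varepsilon(T_0)\to0$ as $T_0\to0$, uniformly in $h\le h_2$. To see this, write one step as a deterministic drift of size $O(h)$, both from $-hG'$ and from the CIR drift, which is bounded on $D$, plus a centred resolvent increment whose second moment is $O(h)$ by \eqref{eq:resolvent-moment}. A Doob maximal inequality for the resulting martingale part then gives a displacement over time $T_\star$ of order $\sqrt{T_\star}+BT_\star$, which stays below $d_0$ once $T_0$ is small. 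The drift bound only requires $\|G'\|_\infty=B$ from \eqref{eq:nonconvex-hyp}.

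\emph{Density lower bound on the window.} For the $m$-step law restricted to paths that remain in $D$, I would use the killed kernel $K_h^{D}(x,dy)=K_h(x,dy)\mathbf 1_D(y)$. Its density is $\alpha G^{\rm sticky}_\alpha(\phi_h(x),y)m'(y)$, which on $D$ is comparable to a Laplace-type kernel of width $\sqrt h$ in the scale of $m'$, and is bounded below by $c\,h^{-1/2}e^{-C|x-y|/\sqrt h}$ via the asymptotics of $M,U$ for large $a=1/(2\lambda h)$. One way forward is a chaining argument. Over $m\approx T_\star/h$ steps a random walk with such increments and $O(h)$ drift has an $m$-fold convolution density bounded below by $c/\sqrt{T_\star}$ on any interval of length $\theta\lesssim\sqrt{T_\star}$ around the start (local CLT lower bound). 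The constraint $\operatorname{diam}\le\theta$ is exactly what keeps $[a,b]$ within this Gaussian bulk. Alternatively, and more robustly, I would obtain this step from the finite-time weak convergence of \cref{thm:ula-weak-conv} combined with a positive lower bound on the killed transition density of the limiting diffusion on $D$. Since that theorem gives only weak convergence, it has to be upgraded to a density statement, and for this I would use the chaining (Chapman--Kolmogorov) bound directly rather than the weak limit.

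\emph{Conclusion and main obstacle.} Combining the two steps, $K_h^m(x',A)\ge K_h^{D,m}(x',A)\ge c_\star\,\mathrm{Leb}(A\cap[a,b])$ for every $A$ and every $m$ with $|m-T_\star/h|\le1$. The restriction $|m-T_\star/h|\le1$ costs only one extra step, which changes the relevant time by $h$ and so leaves the constants uniform. The main obstacle is the uniform-in-$h$ local lower bound in the density step. One needs explicit large-$a$ asymptotics (Olver's uniform expansions of $M(a,b,z)$ and $U(a,b,z)$ for $az$ large) to show that the one-step kernel is a genuine approximate identity of width $\sqrt h$ with a uniform positive lower bound. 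The local CLT lower bound for the inhomogeneous chain then has to be controlled, and I expect most of the work to lie in making these estimates uniform over starting points in $D$.
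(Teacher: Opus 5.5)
Your architecture matches the paper's: localise to an interior window, then bound the $m$-step density from below there. The difficulty is that the step carrying all the weight is not supplied, and the mechanism you sketch for it does not work.

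\textbf{The density step.} Chaining one-step pointwise lower bounds $c\,h^{-1/2}e^{-C|x-y|/\sqrt h}$ through Chapman--Kolmogorov costs a factor $q=2c/C$ at every step, where $q$ is the mass of the minorant and is $<1$ unless the minorant is the kernel itself. After $m\approx T_\star/h$ steps the bound is of size $q^{T_\star/h}\to0$. A uniform-in-$h$ bound therefore has to use the exact mass and first two moments of each step, which is a local limit theorem. For a Markov chain whose increments depend on the current state, this is not a standard local CLT. It needs a parametrix (perturbation) expansion of the $m$-step density around frozen-coefficient Gaussians.

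That is precisely the input the paper imports. It extends the drift off a neighbourhood $J$ of $I\cup[a,b]$ to a globally bounded one. It then invokes the two-sided Gaussian (Aronson-type) bounds of Lemaire and Menozzi for the Euler scheme. Finally it transfers these to $K_h=R_h\circ\phi_h$ via the Konakov--Mammen parametrix for chains whose one-step mean and variance match the Euler step to $O(h^2)$. Without such a result, or an equivalent local limit theorem derived from your large-$a$ Kummer asymptotics, the density step stays open, as you acknowledge.

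\textbf{The combination step.} This also fails as written. Your localisation bound controls only the total mass $\varepsilon(T_0)$ of paths that leave $D$. Subtracting it from a lower bound for an unkilled density gives $c_0\,\mathrm{Leb}(A\cap[a,b])-\varepsilon(T_0)$, which is not a minorisation for sets $A$ of small measure. You need a density-level bound on the exit-and-return contribution at each $y\in[a,b]$.

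The paper gets this from the \emph{upper} Gaussian bound. That contribution is of order $T_\star^{-1/2}e^{-c\,d_J^2/(2T_\star)}$, where $d_J$ is the distance from $I\cup[a,b]$ to $\partial J$. It is compared with the lower bound $T_\star^{-1/2}e^{-D^2/(2cT_\star)}$ at distance $D=\operatorname{diam}(I\cup[a,b])$. This comparison is exactly where the thresholds come from. Taking $\theta=\tfrac12 c\,d_J$ forces $D^2/c<c\,d_J^2$, and then a small $T_0$ makes the locality error at most half the lower bound.

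\textbf{Quantifier mismatch.} Your bulk estimate on intervals of length $\lesssim\sqrt{T_\star}$ would make $\theta$ depend on $T_\star$. The lemma instead fixes $\theta$ and allows any $T_\star\le T_0$. So you would need an off-diagonal lower bound, or genuine density convergence to the killed diffusion at fixed $T_\star$, not merely a bulk one.
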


\begin{proof}
  Set $D\coloneqq \operatorname{diam}(I\cup[a,b])$ and $a_0\coloneqq \operatorname{dist}(I\cup[a,b],0)$, and define
  the bounded interval
  \(
    J\coloneqq \bigl(\tfrac12 a_0,\ \max(I\cup[a,b])+\tfrac12 a_0\bigr),
  \)
  so that $I\cup[a,b]\subset J$ and $d_J\coloneqq \operatorname{dist}(I\cup[a,b],\partial J)=\tfrac12 a_0$. The
  sticky CIR has constant diffusion $\sigma^2=2/\beta$ and drift
  $b(x)=\tfrac{\delta-1}{\beta x}-\lambda x-G'(x)$ (\cref{eq-CIR}); on $J$ the drift is
  bounded (the $1/x$ term is regular away from $0$ and $\|G'\|_\infty=B<\infty$) and $\sigma^2$ is
  constant. Extend the drift off $J$ to a globally bounded function, leaving $\sigma^2$ constant and
  unchanged on $J$; the extended coefficients then satisfy the uniform-ellipticity and
  bounded-drift, H\"older-diffusion hypotheses \textup{(UE)}, \textup{(SB)} of \citet{LemaireMenozzi2010}. Write $q^{m}_h(x',\cdot)$ for the $m$-step
  ($K_h^{m}$) density of the scheme with these extended coefficients.

  \emph{(i) Discrete two-sided Gaussian bound.} By \citet[Thm.~2.1]{LemaireMenozzi2010}, the Euler
  scheme  obeys the following two-sided Gaussian bound: for $0<T_\star\le1$,
  \begin{equation}\label{eq:scheme-aronson}
    C^{-1}\,\mathsf{g}_{1/c}(T_\star;x',v)\ \le\ q^{m}_h(x',v)\ \le\ C\,\mathsf{g}_{c}(T_\star;x',v),
  \end{equation}
  where $\mathsf{g}_\kappa(t;x,y)$ is the Gaussian density $\mathcal{N}(x,\,t/\kappa)$ at $y$,
  with $c\in(0,1]$, $C\ge1$ independent of $T_\star$ and $h$. The result is stated for the Euler
  scheme; our chain $K_h=R_h\circ\phi_h$ integrates the diffusion \emph{exactly} through the resolvent
  $R_h=(I-h\mathcal L_1)^{-1}$ and applies the drift $-G'$ by the Lipschitz map $\phi_h$
  (\cref{eq:clamped-kernel}), so its one-step law from $x$ has mean $x+hb(x)+O(h^2)$ and variance
  $\sigma^2 h+O(h^2)$, matching the Euler step to $O(h^2)$. Now \eqref{eq:scheme-aronson} is proved by a
  discrete parametrix expansion that builds the $m$-step density by chaining the one-step transition
  law, using it only through its mean, variance, and Gaussian tail; this construction is not specific
  to the Euler scheme but holds for general Markov chains converging to the diffusion
  \citep{KonakovMammen2000}. The matched one-step data therefore yield the same bound for $q^{m}_h$
  (for $h\le h_2$), with the exact diffusion step only removing the Euler discretisation error. No smoothness of $G$ beyond
  $\|G'\|_\infty<\infty$ is used.

  \emph{(ii) Interior lower bound.} Set the threshold $\theta\coloneqq\tfrac12 c\,d_J\,(\le d_J)$. For
  $D\le\theta$, $x'\in I$ and $v\in[a,b]$, the lower bound in \eqref{eq:scheme-aronson} with
  $|x'-v|\le D$ gives
  \begin{equation}\label{eq:cts-lower}
    q^{m}_h(x',v)\ \ge\ C^{-1}\sqrt{\tfrac{1}{2\pi c\,T_\star}}\,\exp\!\Bigl(-\tfrac{D^2}{2c\,T_\star}\Bigr)\eqqcolon c_0>0,
    \qquad \forall x'\in I,\ v\in[a,b],\ h\le h_2 .
  \end{equation}

  \emph{(iii) Locality and the small-$T_\star$ choice.} The bound \eqref{eq:scheme-aronson} is for the
  extended coefficients. Restoring the original sticky-CIR coefficients (which agree with the
  extension on $J$) changes them only \emph{outside} $J$, at distance $\ge d_J$ from the segment
  $[x',v]\subset J$. Coupling the extended and original schemes until their first exit from $J$, the
  two $m$-step densities at $v\in[a,b]$ can differ only through paths that leave $J$---a displacement
  $\ge d_J$ from $[x',v]$---so by the off-diagonal upper bound of \eqref{eq:scheme-aronson} the
  original sticky-CIR density differs from $q^{m}_h(x',v)$ by at most
  $C\sqrt{c/(2\pi T_\star)}\,\exp(-c\,d_J^2/(2T_\star))$. Since $D\le\theta=\tfrac12 c\,d_J$ gives
  $D^2/c\le\tfrac14 c\,d_J^2<c\,d_J^2$, the ratio of this error to $c_0$ as $T_\star\downarrow 0$ satisfies
  \[
    C^2 c\,\exp\!\Bigl(-\frac{c\,d_J^2-D^2/c}{2T_\star}\Bigr)\to 0.
  \]
  Hence, there is $T_0>0$ such that for $T_\star\le T_0$ the locality error is $\le\tfrac12 c_0$, and the
  original sticky-CIR density of $K_h^{m}(x',\cdot)$ is $\ge c_0-\tfrac12 c_0=\tfrac12 c_0\eqqcolon
  c_\star$ on $[a,b]$, $x'\in I$, which is \eqref{eq:scheme-lower}.
\end{proof}

We extend this minorisation to include initial data up to and including the boundary.
\begin{lemma}[minorisation]\label[lemma]{lem:interior-minorisation}
  Assume \cref{eq:nonconvex-hyp}, and let $C=[0,M_0]$ be compact. There exists a time $T>0$, an
  interval $[a,b]\subset(0,M_0)$, a constant $\varepsilon>0$ and $h_0>0$ such that, writing
  $m_h=\ceil*{T/h}$,
  \begin{equation}\label{eq:interior-minor}
    K_h^{m_h}(x,\cdot)\ \ge\ \varepsilon\,\mathrm{Leb}|_{[a,b]}\qquad\text{for all }x\in C,\ h\le h_0 .
  \end{equation}
\end{lemma}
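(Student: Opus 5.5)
The plan is to reduce the boundary-inclusive minorisation to the interior one of \cref{lem:scheme-density} via a two-stage decomposition $K_h^{m_h}=K_h^{m_1}K_h^{m_2}$ with $m_1\approx T_1/h$ and $m_2\approx T_\star/h$. The first stage carries every starting point $x\in[0,M_0]$, including the atom, into a fixed small interior interval $I\subset(0,M_0)$ with probability bounded below uniformly in $h$. The second stage applies \cref{lem:scheme-density} from $I$ to a target interval $[a,b]$. Concretely, I would choose $I$ and $[a,b]$ near a common point $x_\star\in(0,M_0)$ so that $\operatorname{diam}(I\cup[a,b])\le\theta$, and take $T_\star\le T_0$. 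Then \eqref{eq:scheme-lower} gives $K_h^{m_2}(x',\cdot)\ge c_\star\mathrm{Leb}|_{[a,b]}$ for $x'\in I$. Chapman--Kolmogorov then yields
\[
K_h^{m_1+m_2}(x,\cdot)\ge c_\star\,K_h^{m_1}(x,I)\,\mathrm{Leb}|_{[a,b]}.
\]
It therefore suffices to show $\inf_{x\in[0,M_0],\,h\le h_0}K_h^{m_1}(x,I)\ge p_\star>0$. The rounding $|m-T_\star/h|\le1$ in \cref{lem:scheme-density} absorbs the integer-part mismatch when $m_h=\lceil T/h\rceil$ with $T=T_1+T_\star$.

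For the lower bound on $K_h^{m_1}(x,I)$ I would use the finite-time weak convergence of \cref{thm:ula-weak-conv}. Take a continuous cutoff $\varphi$ with $0\le\varphi\le\mathbf 1_I$ and $\varphi>0$ on the middle of $I$. Then $K_h^{m_1}\varphi(x)\to P_{T_1}\varphi(x)$ uniformly for $x$ in the compact set $[0,M_0]$. It remains to bound $\inf_{x\in[0,M_0]}P_{T_1}\varphi(x)$ from below by a positive constant, using the limit diffusion of \cref{thm-exist-potential}. By regularity (\cref{def-regular-diffusion}) and the sticky structure, $P_{T_1}\varphi(x)>0$ for every $x$, including $x=0$, since the process leaves the atom at a positive rate. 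The Feller continuity from \cref{thm-exist-potential} makes $x\mapsto P_{T_1}\varphi(x)$ continuous on the compact $[0,M_0]$, so its infimum $2p_\star$ is positive. For $h$ small, uniform convergence then gives $K_h^{m_1}(x,I)\ge K_h^{m_1}\varphi(x)\ge p_\star$. Finally set $\varepsilon=c_\star p_\star$ and take $[a,b]\subset(0,M_0)$.

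The main obstacle I expect is the positivity $P_{T_1}\varphi(x)>0$ uniformly, in particular from the atom and for a fixed deterministic time rather than a hitting time. I would argue it through the time-change representation $u_t=\tilde u_{T_t}$ together with the Girsanov equivalence of \cref{thm-exist-potential}; this reduces the question to the $G=0$ sticky process, for which the transition law has a strictly positive density on $(0,\infty)$ at every $t>0$ (for instance via the Laplace-inversion of the explicit kernel in \cref{thm:transition-dist}, or via irreducibility plus continuity in $t$). A secondary subtlety is that \cref{thm:ula-weak-conv} assumes \cref{ass:moment}, which holds here by \cref{lem:lyapunov} since $G'$ is bounded.
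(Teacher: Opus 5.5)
Your proposal is correct and follows essentially the same route as the paper. It uses a Chapman--Kolmogorov split: the first stage reaches a small interior interval $I$ from every $x\in[0,M_0]$, using the finite-time weak convergence theorem together with the Feller continuity and strict positivity of the limiting sticky diffusion on the compact set. The second stage applies the interior minorisation lemma from $I$ to $[a,b]$. Your variations do not change the argument and are, if anything, slightly more careful than the paper. You test against a continuous cutoff $\varphi\le\mathbf 1_I$, whereas the paper uses lower semicontinuity and a portmanteau liminf. You use separate times $T_1,T_\star$ instead of equal halves. You also justify positivity at a fixed deterministic time via the time change and Girsanov equivalence, which the paper simply asserts from regularity.
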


\begin{proof}
  Let $P_t=e^{t\mathcal L}$ be the transition semigroup of the sticky CIR process with potential
  $G$. On $(0,\infty)$, this is a non-degenerate diffusion, with generator
  $\mathcal L=\tfrac12\sigma^2\partial_x^2+b(x)\partial_x$, constant diffusion coefficient
  $\sigma^2=2/\beta$, and drift $b(x)=\tfrac{\delta-1}{\beta x}-\lambda x-G'(x)$ (\cref{eq-CIR}); by
  \eqref{eq:nonconvex-hyp}, the drift is smooth on $(0,\infty)$ and, with its derivatives, bounded on
  every interval $[\rho,\rho']\subset(0,\infty)$, while $\sigma^2$ is constant, hence uniformly
  elliptic. Fix a point $x_0\in(0,M_0)$ and
  take intervals $I,[a,b]\subset(0,M_0)$ around $x_0$ of diameter at most the threshold $\theta$ of
  \cref{lem:scheme-density}, and $T$ small enough that $T_\star\coloneqq T/2\le T_0$, so that
  \eqref{eq:bulk-geom} holds. We bound $K_h^{m_h}(x,\cdot)$ below
  by routing through $I$ at the half-way time: by the Chapman--Kolmogorov inequality, with
  $m_h'=\floor*{m_h/2}$ and $m_h''=m_h-m_h'$,
  \begin{equation}\label{eq:ck-split}
    K_h^{m_h}(x,A)\ \ge\ \int_I K_h^{m_h'}(x,dx')\,K_h^{m_h''}(x',A),\qquad A\subset[a,b].
  \end{equation}
  We bound the two factors separately.

  \emph{Reaching the interior set $I$.} By \cref{thm:ula-weak-conv},
  $K_h^{\floor*{T_\star/h}}(x,\cdot)\to P_{T_\star}(x,\cdot)$ weakly, uniformly for $x\in C$. The open set $I^\circ$ satisfies $P_{T_\star}(x,I^\circ)\ge c_1>0$ for
  all $x\in C$: the sticky boundary $0$ is regular, so the process leaves it and reaches every
  interior set, whence $P_{T_\star}(x,I^\circ)>0$ for each $x$. The sticky CIR is Feller (\cref{thm-exist-potential}), so $x\mapsto P_{T_\star}(x,I^\circ)$ is lower
  semicontinuous. A lower semicontinuous, strictly positive function
  attains its positive minimum on the compact $C$. Then, $\liminf_{h\to0}K_h^{m_h'}(x,I^\circ)
  \ge P_{T_\star}(x,I^\circ)\ge c_1$, uniformly on $C$; so for $h\le h_1$,
  \begin{equation}\label{eq:reach-bulk}
    K_h^{m_h'}(x,I)\ \ge\ \tfrac12 c_1\qquad \forall x\in C.
  \end{equation}
  This step asks only that the chain reach an interior \emph{set} with positive probability; no
  density estimate, and no rate, is needed, so the degeneracy and stickiness at $0$ are
  immaterial here.

  \emph{Interior minorisation.} By \cref{lem:scheme-density}, there are $c_\star>0$ and $h_2>0$ with
  \begin{equation}\label{eq:step2-lower}
    K_h^{m_h''}(x',\cdot)\ \ge\ c_\star\,\mathrm{Leb}|_{[a,b]}\qquad \forall x'\in I,\ h\le h_2.
  \end{equation}

  Finally, for $h\le h_0\coloneqq \min(h_1,h_2)$ and any measurable $A$, by \eqref{eq:ck-split},
  \eqref{eq:reach-bulk} and \eqref{eq:step2-lower},
  \[
    K_h^{m_h}(x,A)\ \ge\ \int_I K_h^{m_h'}(x,dx')\,c_\star\,\mathrm{Leb}(A\cap[a,b])
    \ \ge\ \tfrac12 c_1 c_\star\,\mathrm{Leb}(A\cap[a,b]),
  \]
  which is \eqref{eq:interior-minor} with $\varepsilon=\tfrac12 c_1 c_\star$.
\end{proof}

\begin{proof}[Proof of \cref{lem:contraction-nonconvex}]
  Hypothesis \eqref{eq:nonconvex-hyp} implies the conditions of \cref{lem:lyapunov}: $|G'|\le B$
  gives linear growth, and $xG'(x)\ge-Bx\ge-\tfrac\lambda2 x^2-\tfrac{B^2}{2\lambda}$ gives the
  one-sided bound with $c_0=\lambda/2<\lambda$. Let $V(x)=1+x^2$, and let
  $\gamma=\lambda/(\alpha+2\lambda)$ and $b$ be the drift constants of \cref{lem:lyapunov}, for which
  $b/\gamma$ is independent of $h$. Fix the compact sublevel set $C\coloneqq\{V\le R_0\}=[0,M_0]$ for any
  $R_0>1+b/\gamma$.
  Take $T,[a,b],\varepsilon,h_0$ from \cref{lem:interior-minorisation} and set $P\coloneqq
  K_h^{m_h}$, $m_h=\ceil*{T/h}$. 
  
  The iterated drift \eqref{eq:lyapunov-geom} of
  \cref{lem:lyapunov}, with $n=m_h$, gives a geometric drift for $P$,
  \[
    PV\le\kappa V+b/\gamma,\qquad \kappa\coloneqq(1-\gamma)^{m_h},
  \]
  which is $h$-uniform: since $\gamma=\lambda/(\alpha+2\lambda)=\lambda h+O(h^2)$ and $m_h=T/h+O(1)$,
  one has $\gamma\,m_h\to\lambda T$, so $\kappa=e^{-\lambda T}\bigl(1+o(1)\bigr)$ is bounded away from
  $1$ uniformly in $h$, and $b/\gamma$ is $h$-independent. Moreover, \eqref{eq:interior-minor} is a
  minorisation of $P$ on the return set $C$, with constant $\varepsilon$ and reference probability
  measure $\nu=(b-a)^{-1}\mathrm{Leb}|_{[a,b]}$. A geometric drift together with a minorisation on
  the drift's return set is precisely the hypothesis of the quantitative Harris theorem
  \citep[Thm.~1.3]{HairerMattingly2011} (see also \citealp[Thm.~15.0.1]{MeynTweedie1993}): $P$ has
  a unique invariant law $\pi_h$, which is then the unique $K_h$-invariant law, and
  \[
    \|P^k(x,\cdot)-\pi_h\|_V\le \bar C\,V(x)\,\bar\rho^{\,k},
  \]
  with $\bar C<\infty$ and $\bar\rho\in(0,1)$ depending only on $(\gamma,b,\varepsilon,T)$ and uniform in $h$. Unwinding $P=K_h^{m_h}$ (so $m_h=T/h+\order{1}$ steps per $P$-step), and
  using $\|K_h\mu-K_h\mu'\|_V\le \|\mu-\mu'\|_V$ on intermediate steps, gives
  \[
    \|K_h^n(x,\cdot)-\pi_h\|_V\le \bar C'\,V(x)\,\bar\rho^{\,\floor*{n/m_h}}
      =\bar C'\,V(x)\,(1-ch+\order{h^2})^{\,n},\qquad c=\frac{|\ln\bar\rho|}{T}>0 .
  \]
  This is \eqref{eq:ass-contraction}, uniformly in $h$.
\end{proof}

\subsection{Proof of the first-order expansion (\cref{thm:sharp-rate})}\label{app:sharp-rate}

Throughout this subsection $\eta$ is bounded measurable, $\hat\eta\coloneqq \eta-\pi(\eta)$, and all constants $C$ depend only on the model parameters $(\lambda,\beta,\delta,\mu)$ and $\|G'\|_{C^2_{\mathrm b}}$; dependence on $\snorm{\eta}_\infty$ is always displayed.  We use the scale--speed pairs of \cref{sec:background}: for the driftless sticky CIR, $s'(x)=x^{-(\delta-1)}e^{\lambda\beta x^2/2}$, $m'(x)=\beta x^{\delta-1}e^{-\lambda\beta x^2/2}$, $m(\{0\})=1/\mu$, and for the tilted process $s_G'=s'e^{\beta G}$, $m_G'=m'e^{-\beta G}$, $m_G(\{0\})=e^{-\beta G(0)}/\mu$, under which $\pi=m_G/Z$ with $Z=m_G([0,\infty))$.  Note the two identities used repeatedly:
\begin{equation}\label{eq:sm-identities}
  s_G'(x)\,m_G'(x)=\beta,
  \qquad
  \rho(x)\coloneqq \frac{m_G'(x)}{Z}=\rho_\star\,x^{\delta-1}\bigl(1+O(x)\bigr),
\end{equation}
where $\rho_\star\coloneqq \beta e^{-\beta G(0)}/Z=\mu\beta\,\pi(\{0\})$.
The argument has three ingredients: regularity of the continuous Poisson solution (\cref{lem:poisson-cont}); the preconditioned stationarity identity \cref{eq:precond-identity} (\cref{lem:precond}), which eliminates the resolvent kernel  and the Kummer boundary layer from the bias exactly; and a near-boundary total-variation difference bound (\cref{lem:local-tv}).  These combine to give the expansion; the one subtlety is that the difference term is bounded by a vanishing multiple of $\|\pi_h-\pi\|_{\mathrm{TV}}$ itself, so the crude bound of \cref{thm:ula-bias} is sharpened by a single re-substitution before the $O(h)$ remainder emerges.

The continuous Poisson solution $g_\star$ of $\mathcal Lg_\star=-\hat\eta$ is the explicit scale--speed solution.  Since $\mathcal L=\frac{d}{dm_G}\frac{d}{ds_G}$ \eqref{eq:generator-scale-speed}, we integrate twice: against $dm_G$ for the flux $\Phi\coloneqq dg_\star/ds_G$ (the constant fixed by $\Phi(\infty)=0$, forced by admissibility as $s_G'$ grows super-polynomially), then against $ds_G$. This gives
\begin{equation}\label{eq:pois-rep}
  \Phi(y)\coloneqq \int_{(y,\infty)}\hat\eta\,dm_G,
  \qquad
  g_\star(x)\coloneqq \int_0^x s_G'(y)\,\Phi(y)\,dy .
\end{equation}
The next lemma records the regularity and boundary behaviour of $g_\star$.

\begin{lemma}[the continuous Poisson solution]\label[lemma]{lem:poisson-cont}
  The function $g_\star$ of \eqref{eq:pois-rep} lies in $C([0,\infty))\cap C^1((0,\infty))$ with $g_\star'$ locally absolutely continuous, solves $\mathcal Lg_\star=-\hat\eta$ a.e.\ on $(0,\infty)$, and has the boundary flux
  \begin{equation}\label{eq:pois-flux}
    \frac{dg_\star}{ds_G}(0^+)=-\hat\eta(0)\,\frac{e^{-\beta G(0)}}{\mu}
    \qquad\bigl(\text{equivalently }\ \mathcal Lg_\star(0)=-\hat\eta(0)\bigr).
  \end{equation}
  For $0<x\le1$, with $c_F\coloneqq -\hat\eta(0)/\mu$, 
  \begin{equation}\label{eq:pois-cusp}
  \begin{aligned}
    g_\star'(x)&=c_F\,x^{1-\delta}+O\bigl(\snorm{\eta}_\infty x^{2-\delta}\bigr),\\
    g_\star''(x)&=(1-\delta)\,c_F\,x^{-\delta}+E(x),
    \qquad |E(x)|\le C\snorm{\eta}_\infty\bigl(1+x^{1-\delta}\bigr).
  \end{aligned}
  \end{equation}
For $x\ge1$, $|g_\star'(x)|\le C\snorm{\eta}_\infty/x$ and $|g_\star''(x)|\le C\snorm{\eta}_\infty$ a.e.; and $|g_\star(x)|\le C\snorm{\eta}_\infty\bigl(1+\log(1+x)\bigr)$.
\end{lemma}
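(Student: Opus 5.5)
The proof works directly from the explicit representation \eqref{eq:pois-rep}: first the flux $\Phi$, then $g_\star'=s_G'\Phi$, then $g_\star''$.

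\emph{Regularity and the equation.} Since $\hat\eta$ is bounded and $m_G$ is finite, with Gaussian tails from $m_G'=m'e^{-\beta G}$ and $G$ of at most linear growth, $\Phi$ is well defined, bounded and locally absolutely continuous on $(0,\infty)$. Its derivative is $\Phi'=-\hat\eta\,m_G'$ a.e., and it has the right limit $\Phi(0^+)=\int_{(0,\infty)}\hat\eta\,dm_G$. Because $\pi(\hat\eta)=0$ and $\pi=m_G/Z$, we have $\int_{[0,\infty)}\hat\eta\,dm_G=0$. Hence
\[
\Phi(0^+)=-\hat\eta(0)\,m_G(\{0\})=-\hat\eta(0)e^{-\beta G(0)}/\mu,
\]
which is \eqref{eq:pois-flux}. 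Next, $g_\star'=s_G'\Phi$ is continuous on $(0,\infty)$ and locally absolutely continuous there. It is integrable at $0$ because $s_G'\sim e^{\beta G(0)}x^{1-\delta}$ with $1-\delta>-1$, so $g_\star\in C([0,\infty))$. Differentiating the flux gives $\frac{d}{dm_G}\frac{dg_\star}{ds_G}=\Phi'/m_G'=-\hat\eta$ a.e., which is the equation by \eqref{eq:generator-scale-speed}. The identification $\mathcal Lg_\star(0)=\mu e^{\beta G(0)}\Phi(0^+)$ follows the same computation of the generator at the atom as in \cref{lem:resolvent-regularity}, now with atom mass $e^{-\beta G(0)}/\mu$.

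\emph{Cusp expansion on $(0,1]$.} Write $\Phi(x)=\Phi(0^+)+\int_0^x\hat\eta\,m_G'$, where $|\int_0^x\hat\eta\,m_G'|\le C\snorm{\eta}_\infty x^\delta$. Also $s_G'(x)=e^{\beta G(0)}x^{1-\delta}(1+O(x))$, using that $G'$ is bounded. Multiplying out,
\[
g_\star'=e^{\beta G(0)}\Phi(0^+)\,x^{1-\delta}+O\bigl(\snorm{\eta}_\infty x^{2-\delta}\bigr)+O\bigl(\snorm{\eta}_\infty x\bigr).
\]
The first coefficient is exactly $c_F=-\hat\eta(0)/\mu$, and $x\le x^{2-\delta}$ on $(0,1]$, giving the first line of \eqref{eq:pois-cusp}. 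For the second line, $g_\star''=s_G''\Phi+s_G'\Phi'$. Since $s_G''=s_G'\bigl((1-\delta)/x+\lambda\beta x+\beta G'\bigr)$, the term $s_G''\Phi$ equals $(1-\delta)c_F x^{-\delta}$ plus errors of size $\snorm{\eta}_\infty(x^{1-\delta}+1)$. The remaining term $s_G'\Phi'=-\hat\eta\,s_G'm_G'=-\beta\hat\eta$ is bounded by \eqref{eq:sm-identities}.

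\emph{Behaviour for $x\ge1$.} This is the main obstacle, because $s_G'$ grows like $e^{\lambda\beta x^2/2}$ and must be compensated by the Gaussian tail of $\Phi$. Bound $|\Phi(y)|\le\snorm{\eta}_\infty\int_y^\infty\beta z^{\delta-1}e^{-\lambda\beta z^2/2-\beta G(z)}dz$. A Laplace-type tail estimate (integrate by parts against $\frac{d}{dz}e^{-\lambda\beta z^2/2}$, with $|G'|\le C$) gives
\[
\int_y^\infty z^{\delta-1}e^{-\lambda\beta z^2/2-\beta G(z)}dz\le C\,y^{\delta-2}e^{-\lambda\beta y^2/2-\beta G(y)}.
\]
Multiplying by $s_G'(y)=y^{1-\delta}e^{\lambda\beta y^2/2+\beta G(y)}$, the exponentials cancel exactly and leave $|g_\star'(y)|\le C\snorm{\eta}_\infty/y$. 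The same bound on $\Phi$, inserted into $s_G''\Phi$ with $|s_G''/s_G'|\le C(1+y)$, gives $|g_\star''|\le C\snorm{\eta}_\infty$, since $|s_G'\Phi'|\le\beta\snorm{\eta}_\infty$. Finally, integrating $g_\star'$ gives $|g_\star(x)|\le|g_\star(1)|+C\snorm{\eta}_\infty\log x$, and $|g_\star(1)|\le C\snorm{\eta}_\infty$ by the cusp bound. This yields the logarithmic growth estimate. The delicate point is the uniformity in the tail estimate: the potential $G$ must not spoil the cancellation, and boundedness of $G'$ is exactly what controls the correction factor in the integration by parts.
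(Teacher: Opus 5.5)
Your proposal is correct and follows the paper's proof essentially step for step. Both arguments use the same chain: the flux value $\Phi(0^+)=-\hat\eta(0)\,m_G(\{0\})$ from centering; the cusp expansion of $g_\star'=s_G'\Phi$; the identity $g_\star''=(s_G''/s_G')\,g_\star'-\beta\hat\eta$ from $s_G'm_G'=\beta$; and, for $x\ge1$, a Gaussian tail bound on $\Phi$ whose exponential cancels that of $s_G'$. Your integration by parts is the paper's Mills-ratio estimate $m_G((x,\infty))\le C\,m_G'(x)/x$. The only slip is cosmetic: $\Phi(x)=\Phi(0^+)-\int_0^x\hat\eta\,m_G'$ carries a minus sign, but only its absolute value is used.
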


\begin{proof}
  \emph{Poisson equation and regularity.}  By construction $\Phi=dg_\star/ds_G$ and $d\Phi=-\hat\eta\,dm_G$ on $(0,\infty)$, so reading these through the generator \eqref{eq:generator-scale-speed}, $\mathcal Lg_\star=\frac{d}{dm_G}\Phi=-\hat\eta$ a.e.\ on $(0,\infty)$.  Since $\hat\eta$ is bounded and $m_G$ finite, $\Phi$ is bounded and absolutely continuous, so $g_\star'=s_G'\Phi$ is locally absolutely continuous ($s_G'$ being smooth on $(0,\infty)$).

  \emph{Boundary flux.}  The constant of the first integration is fixed by centering: $\pi(\hat\eta)=0$ makes the full $m_G$-integral of $\hat\eta$ (atom included) vanish, so
  $\Phi(0^+)=\int_{(0,\infty)}\hat\eta\,dm_G=-\hat\eta(0)\,m_G(\{0\})$, which is \cref{eq:pois-flux}. In other words, the sticky boundary value $\mathcal Lg_\star(0)=\Phi(0^+)/m_G(\{0\})=-\hat\eta(0)$ falls out of (decay at $\infty$) and (centering), with no extra condition imposed.  Moreover, $|\Phi(y)-\Phi(0^+)|\le\snorm{\hat\eta}_\infty\,m_G((0,y])\le C\snorm{\eta}_\infty y^\delta$.

  \emph{Asymptotics.}  Near $0$, $s_G'(x)=x^{1-\delta}e^{\beta G(0)}(1+O(x))$, so $g_\star'=s_G'\Phi$ gives the first part of \cref{eq:pois-cusp} (note $e^{\beta G(0)}\Phi(0^+)=c_F$); integrating, $|g_\star(x)-g_\star(0)|\le C\snorm{\eta}_\infty x^{2-\delta}$, and $g_\star\in C([0,\infty))$.
  Differentiating $g_\star'=s_G'\Phi$ a.e.\ and using $s_G'm_G'=\beta$,
  \begin{equation}\label{eq:gpp-ode}
    g_\star''=\frac{s_G''}{s_G'}\,g_\star'-\beta\hat\eta,
    \qquad
    \frac{s_G''(x)}{s_G'(x)}=\frac{1-\delta}{x}+\lambda\beta x+\beta G'(x),
  \end{equation}
  and inserting the expansion of $g_\star'$ gives the second part of \cref{eq:pois-cusp}.
  For $x\ge1$: a Gaussian-tail (Mills-ratio) bound gives $m_G((x,\infty))\le C\,m_G'(x)/x$, hence $|\Phi(x)|\le C\snorm{\eta}_\infty m_G'(x)/x$ and $|g_\star'(x)|=s_G'|\Phi|\le C\beta\snorm{\eta}_\infty/x$ by \cref{eq:sm-identities}; then \cref{eq:gpp-ode} with $|s_G''/s_G'|\le Cx$ gives $|g_\star''|\le C\snorm{\eta}_\infty$.  Integrating the $g_\star'$ bounds gives the logarithmic growth of $g_\star$.
\end{proof}

\begin{lemma}[preconditioned stationarity identity]\label[lemma]{lem:precond}
  Let $K_h=R_h\circ\phi_h$ be the clamped kernel \eqref{eq:clamped-kernel} and define, for $x>0$,
  \begin{equation}\label{eq:Psi-def}
    \Psi_h(x)\coloneqq G'(x)\,g_\star'(x)-h^{-1}\bigl(g_\star(x)-g_\star(\phi_h(x))\bigr).
  \end{equation}
  Then, for all $h\le h_0$,  we have
  $\pi_h(\eta)-\pi(\eta)=\pi_h\bigl(\Psi_h\,\mathbf 1_{(0,\infty)}\bigr)$ (which is \cref{eq:precond-identity}).
\end{lemma}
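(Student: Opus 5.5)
The plan is to test the stationarity relation $\pi_hK_h=\pi_h$ against the preconditioned function $f\coloneqq(I-h\mathcal L_1)g_\star$ and show that the resolvent disappears exactly. Since $R_h=\alpha(\alpha-\mathcal L_1)^{-1}=(I-h\mathcal L_1)^{-1}$ with $\alpha=1/h$, we have $R_hf=g_\star$. Because $K_h=R_h\circ\phi_h$, this gives $K_hf=g_\star\circ\phi_h$. Stationarity then reads $\pi_h(g_\star\circ\phi_h)=\pi_h(f)=\pi_h(g_\star)-h\,\pi_h(\mathcal L_1g_\star)$. Rearranging,
\[
  \pi_h(\mathcal L_1 g_\star)=h^{-1}\pi_h\bigl(g_\star-g_\star\circ\phi_h\bigr).
\]
Integrability is not an issue: $g_\star$ grows only logarithmically by \cref{lem:poisson-cont}, and $\pi_h$ has a second moment by \cref{ass:moment}.

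The next step relates $\mathcal L_1g_\star$ to $\mathcal Lg_\star=-\hat\eta$. On $(0,\infty)$ we have $\mathcal L=\mathcal L_1-G'\partial_x$, so $\mathcal L_1g_\star=-\hat\eta+G'g_\star'$ a.e. At the atom, \cref{eq:bdy-generator-match} (the potential leaves the sticky boundary generator unchanged) combined with \cref{eq:pois-flux} gives $\mathcal L_1g_\star(0)=\mathcal Lg_\star(0)=-\hat\eta(0)$. Also $\phi_h(0)=0$, so $g_\star(0)-g_\star(\phi_h(0))=0$. Substituting both facts, and using $\pi_h(\hat\eta)=\pi_h(\eta)-\pi(\eta)$ because $\pi_h$ is a probability measure, yields
\[
  \pi_h(\eta)-\pi(\eta)=\pi_h\bigl(G'g_\star'\mathbf 1_{(0,\infty)}\bigr)-h^{-1}\pi_h\bigl((g_\star-g_\star\circ\phi_h)\mathbf 1_{(0,\infty)}\bigr).
\]
This is exactly $\pi_h(\Psi_h\mathbf 1_{(0,\infty)})$.

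The main obstacle is justifying that $g_\star$ lies in the domain of the sticky resolvent, i.e.\ that $R_h(I-h\mathcal L_1)g_\star=g_\star$ holds pointwise including $x=0$. Two features cause trouble: $g_\star$ has the boundary cusp $x^{2-\delta}$ and is unbounded at infinity.

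I would handle the domain question by writing $f\coloneqq g_\star-h\mathcal L_1g_\star$ and checking that $g_\star$ satisfies the resolvent ODE in the interior and the Wentzell condition \cref{eq:wentzell} at $0$. The Wentzell condition takes the form $\frac{dg_\star}{ds}(0^+)=\frac{\alpha}{\mu}\bigl(g_\star(0)-f(0)\bigr)$, as in \cref{eq:resolvent-flux}. It holds because $\frac{dg_\star}{ds}(0^+)=\mu^{-1}\mathcal L_1g_\star(0)$; this identity uses the flux relation $\frac{dg_\star}{ds_G}(0^+)=\frac{dg_\star}{ds}(0^+)e^{-\beta G(0)}$ together with $m_G(\{0\})=e^{-\beta G(0)}/\mu$.

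Uniqueness of the resolvent solution within functions of logarithmic growth then identifies $R_hf$ with $g_\star$. The reason is that the homogeneous solutions are $M$, which grows like $e^{z}z^{a-b}$, and $U$, which fails the boundary condition. Since $\mathcal L_1g_\star=-\hat\eta+G'g_\star'$ is bounded, $f$ is bounded apart from the $\log$ growth of $g_\star$. If needed, I would truncate $g_\star$ at large $x$ and pass to the limit with the second-moment bound on $\pi_h$.
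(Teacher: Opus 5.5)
Your route is the paper's route. You test $\pi_hK_h=\pi_h$ against $f=(I-h\mathcal L_1)g_\star$. You identify $R_hf=g_\star$ through the boundary-value problem (interior resolvent ODE plus $v(0)-h\mu\,\tfrac{dv}{ds}(0^+)=f(0)$) and uniqueness among slowly growing solutions, with $M$ excluded by growth and $U$ by $U_0+h\mu|\mathcal W|>0$. You then use $\mathcal L_1g_\star=-\hat\eta+G'g_\star'$ on $(0,\infty)$, the potential-independent flux giving $\mathcal L_1g_\star(0)=-\hat\eta(0)$, and $\phi_h(0)=0$. The algebra and boundary bookkeeping are right.

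There is one false step in your justification: $\mathcal L_1g_\star=-\hat\eta+G'g_\star'$ is \emph{not} bounded. By \cref{eq:pois-cusp}, $g_\star'(x)=c_F\,x^{1-\delta}+O(x^{2-\delta})$. So $G'g_\star'\sim G'(0)c_F\,x^{1-\delta}$ blows up at $0$ whenever $G'(0)\hat\eta(0)\neq0$, which is exactly the case that later produces a nonzero $K_\star$. Hence $f$ is unbounded near the origin. Your integrability argument (logarithmic growth plus the second moment of \cref{ass:moment}) only controls infinity, so two points must be added.

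\emph{First}, you only verify that $g_\star$ solves the boundary-value problem, but uniqueness also needs $R_hf$ to be a solution in the same growth class. That requires the Green representation $R_hf(x)=w_0(x)f(0)+\alpha\int_0^\infty G^{\rm sticky}_\alpha(x,y)f(y)m'(y)\,dy$ to converge. It does, because $x^{1-\delta}m'(x)\to\beta$, so $f\in L^1_{\rm loc}(m)$.

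\emph{Second}, splitting $\pi_h(f)=\pi_h(g_\star)-h\,\pi_h(\mathcal L_1g_\star)$ needs $\pi_h(|\mathcal L_1g_\star|)<\infty$, which requires control of $\pi_h$ near $0$. By stationarity the interior density is $\rho_h(v)=m'(v)\,\alpha\int G^{\rm sticky}_\alpha(\phi_h(x),v)\,\pi_h(dx)$. The kernel $G^{\rm sticky}_\alpha(s,v)$ is bounded uniformly in $s$ for small $v$ (as in \cref{lem:density-bound}), so $\rho_h\le Cm'$ near $0$ and $x^{1-\delta}$ is $\pi_h$-integrable. This is what the paper invokes; with these two points inserted, your argument is complete.
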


\begin{proof}
  Recall that $\mathcal L_1=\frac{d}{dm}\frac{d}{ds}$ is the $G=0$ sticky generator \eqref{eq:generator-scale-speed}.  Note that, on $(0,\infty)$,  $\mathcal L_1\varphi=\mathcal L\varphi+G'\varphi'$, and the $m$-derivative taken against the point mass gives $\mathcal L_1\varphi(0)=\mu\,(d\varphi/ds)(0^+)$ for any $\varphi$ whose flux $(d\varphi/ds)(0^+)$ exists (see \cref{eq:wentzell}). By \cref{lem:poisson-cont}, the flux of  $\varphi=g_\star$ is well-defined; moreover, writing the flux identity \cref{eq:pois-flux} in the driftless scale via $d/ds=e^{\beta G}\,d/ds_G$, the factors $e^{\pm\beta G(0)}$ cancel and
  \begin{equation}\label{eq:flux-driftless}
    (dg_\star/ds)(0^+)=-\hat\eta(0)/\mu,
    \qquad\text{(equivalently, $\mathcal L_1g_\star(0)=-\hat\eta(0)=\mathcal Lg_\star(0)$)}.
  \end{equation}
  This means the boundary condition is \emph{potential-independent}.  We define $f\coloneqq (I-h\mathcal L_1)g_\star$ \emph{pointwise} on $[0,\infty)$; in particular $f(0)=g_\star(0)-h\mu\,(dg_\star/ds)(0^+)=g_\star(0)+h\hat\eta(0)$.

  \emph{Step 1: $R_hf=g_\star$.}  By \cref{def:resolvent}, the resolvent $R_h=(I-h\mathcal L_1)^{-1}=\alpha(\alpha-\mathcal L_1)^{-1}$ at rate $\alpha=1/h$ is the one-step $G=0$ transition expectation, $R_hf(x)=\mathbb E^{G=0}_x[f(u_{T_\alpha})]$ (\cref{thm:transition-dist,lem:resolvent-regularity}).  For data $f$ that are $m$-integrable near $0$ and of at most polynomial growth, we will show that  $v\coloneqq R_hf$ is a polynomially bounded solution of the boundary-value problem
  \begin{equation}\label{eq:precond-bvp}
    v-h\mathcal L_1v=f\ \text{  on }(0,\infty),
    \qquad
    v(0)-h\mu\,\tfrac{dv}{ds}(0^+)=f(0).
  \end{equation}
    The candidate $g_\star$ solves \cref{eq:precond-bvp} \emph{by the definition of $f$}.  What requires proof is therefore that $R_hf$ solves \cref{eq:precond-bvp} and that polynomially bounded solutions are unique; together these force $R_hf=g_\star$.

  \emph{(Solvability and uniqueness)} Combining the pointwise splitting $\mathcal L_1=\mathcal L+G'\partial_x$ on $(0,\infty)$ with the Poisson equation and the $g_\star'$-bounds of \cref{lem:poisson-cont}, \begin{equation}
    \mathcal L_1g_\star=-\hat\eta+G'g_\star'\label{eq:l1gstar}
  \end{equation}
  and $\mathcal L_1g_\star$ is $O(x^{1-\delta})$ near $0$ and bounded at $\infty$. Thus, $f$ is $L^1(m)$ near $0$ and grows at most logarithmically. In other words, $g_\star$ and $R_hf=\mathbb E_x[f(u_{T_\alpha})]$ grow at most logarithmically.  The Green representation $R_hf(x)=w_0(x)f(0)+\alpha\int_0^\infty G^{\rm sticky}_\alpha(x,y)\,f(y)\,m'(y)\,dy$ converges and solves \cref{eq:precond-bvp}, the variation-of-parameters computation of \cref{lem:sticky-resolvent} needing only $f\in L^1_{\mathrm{loc}}(m)$.  Two polynomially bounded solutions differ by a $w$ with $(\alpha-\mathcal L_1)w=0$ and homogeneous boundary relation; the interior solutions are $\operatorname{span}\{M(a,b,z_x),U(a,b,z_x)\}$ (\cref{lem:sticky-resolvent}), and the $M$-mode ($\sim e^{z_x}$, Gaussian) is excluded by the polynomial growth, so $w=c\,U(a,b,z_\cdot)$.  The boundary relation then reads
  \[
    c\,\bigl[\,U_0-h\mu\,(dU/ds)(0^+)\,\bigr]=c\,\bigl[\,U_0+h\mu|\mathcal W|\,\bigr]=0,
  \]
  using $(dU/ds)(0^+)=-|\mathcal W|$ and $U(a,b,0)=U_0>0$ (\cref{lem:sticky-resolvent}); the bracket is strictly positive, so $c=0$ and $R_hf=g_\star$.

  \emph{Step 2: pairing with $\pi_h$.}  Since $K_h=R_h\circ\phi_h$, Step 1 gives $K_hf=(R_hf)\circ\phi_h=g_\star\circ\phi_h$. By \cref{ass:moment},  $\pi_h(V)\le1+M$. Further,  the interior density of $\pi_h$ behaves like $m'$ near $0$ (\cref{thm:ula-exists}; the kernel bound in \cref{lem:density-bound}). As $f\in L^1(m)$ and grows at most logarithmically,  $f$ is $\pi_h$-integrable. Stationarity $\pi_h(K_hf)=\pi_h(f)$ therefore gives
  \[
    0=\pi_h\bigl(g_\star\circ\phi_h-g_\star\bigr)+h\,\pi_h(\mathcal L_1g_\star).
  \]
  Substitute $\mathcal L_1g_\star=-\hat\eta+G'g_\star'\,\mathbf 1_{(0,\infty)}$ and $\pi_h(\hat\eta)=\pi_h(\eta)-\pi(\eta)$.  At $x=0$, the first substitution reads $\mathcal L_1g_\star(0)=-\hat\eta(0)$, which is \cref{eq:flux-driftless}.  Since $\phi_h(0)=0$, the atom drops from the first term; dividing by $h$ and rearranging gives
  $\pi_h(\eta)-\pi(\eta)=\pi_h\bigl(\Psi_h\,\mathbf 1_{(0,\infty)}\bigr)$ for $\Psi_h(x)=G'(x)\,g_\star'(x)-h^{-1}\bigl(g_\star(x)-g_\star(\phi_h(x))\bigr)$, as required.
\end{proof}

\begin{lemma}[near-boundary total-variation difference]\label[lemma]{lem:local-tv}
  Under the assumptions of \cref{thm:ula-bias}, there are $C<\infty$ and $h_0>0$ such that, for all $h\le h_0$ and $0<\epsilon\le2\sqrt h$, the total variation of $\pi_h-\pi$ on the near-boundary interval $(0,\epsilon]$ obeys
  \[
    |\pi_h-\pi|\bigl((0,\epsilon]\bigr)\;\le\;C\bigl(h^{\delta}+h^{\delta-1}\epsilon^\delta\,\lvert\log\epsilon\rvert\bigr).
  \]
\end{lemma}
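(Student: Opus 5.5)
We bound the local total variation by testing against functions supported near the boundary. Fix a measurable $A\subseteq(0,\epsilon]$ and write $\mathbf 1_A$ for its indicator. Stationarity of $\pi_h$ for $K_h$ gives the exact identity
\[
  (\pi_h-\pi)(A)=(\pi_h-\pi)(K_h\mathbf 1_A)+\bigl((\pi K_h)(A)-\pi(A)\bigr)
  =(\pi_h-\pi)(K_h\mathbf 1_A)+\nu_h(A).
\]
\emph{Second term.} This is the one-step defect, and \cref{lem:boundary-charge} bounds it by $|\nu_h(A)|\le\|\nu_h\|_V\le C_{\mathrm b}h^\delta$. This accounts for the $h^\delta$ part of the claim. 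Taking the supremum over $A$, and treating the positive and negative parts of $\pi_h-\pi$ separately by using Hahn sets inside $(0,\epsilon]$, reduces the statement to bounding $\sup_A|(\pi_h-\pi)(K_h\mathbf 1_A)|$.

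\emph{First term, a priori estimate.} The function $\kappa_A(x)\coloneqq K_h(x,A)=P^{G=0}_{\phi_h(x)}(u_{T_\alpha}\in A)$ satisfies $0\le\kappa_A\le C\epsilon^\delta$ uniformly in $x$, by \cref{lem:density-bound} (this is where $\epsilon\le2\sqrt h$ enters). Combining this with the a priori bound of \cref{thm:ula-bias} gives
\[
  |(\pi_h-\pi)(\kappa_A)|\le\|\kappa_A\|_\infty\,\|\pi_h-\pi\|_{\mathrm{TV}}\le C\epsilon^\delta h^{\delta-1}.
\]
This is the claimed bound up to the factor $|\log\epsilon|$, which only makes the estimate weaker. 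So the crude route already closes, and I would present it as the main argument.

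\emph{Where the logarithm would come from.} If one wants to match the stated form exactly, or if the sup bound of \cref{lem:density-bound} needed a correction near $x\approx0$, the plan is to replace the uniform estimate by a pointwise one. From the Green representation, for $\phi_h(x)=s\ge\epsilon$ the landing probability is $\kappa_A(x)\le C\epsilon^\delta\,U(a,b,z_s)/U_0$, and this decays on the scale $s\sim\sqrt h$. Splitting the pairing over $\phi_h(x)\le\epsilon$ and $\phi_h(x)>\epsilon$ and summing dyadic shells then costs at most a $|\log\epsilon|$ factor. Either way the bound $C(h^\delta+h^{\delta-1}\epsilon^\delta|\log\epsilon|)$ follows.

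\emph{Main obstacle.} The main check is the uniformity of \cref{lem:density-bound} over starting points $\phi_h(x)$ in the boundary layer, including the atom. This follows from $f_0^{\rm sticky}\le Cp_{\mathrm{leave}}$ on $[0,\epsilon]$, which that lemma records. One should also confirm that \cref{thm:ula-bias} uses only \cref{lem:boundary-charge} and \cref{lem:poisson-sup}, and not the present lemma, so that the argument is not circular.
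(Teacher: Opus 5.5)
Your argument is valid as a deduction from the paper's earlier results, and it is a genuinely different, shorter route. The paper writes $\pi_h(\varphi)-\pi(\varphi)=\nu_h(g_h^\epsilon)$ with $g_h^\epsilon=\sum_{n\ge0}K_h^n(\varphi-\pi_h(\varphi))$. It bounds the early terms by the one-step envelope $2C_1\epsilon^\delta$ and the late terms by the contraction \eqref{eq:ass-contraction} applied to $\varphi$ itself, splitting at $N\approx(ch)^{-1}\delta\lvert\log\epsilon\rvert$; the factor $\lvert\log\epsilon\rvert$ is purely the cost of that split.

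Your identity $(\pi_h-\pi)(A)=(\pi_h-\pi)(K_h\mathbf 1_A)+\nu_h(A)$ is that same series with its first term peeled off: the remainder is $\nu_h$ paired with the Poisson solution for $K_h\mathbf 1_A$. The a priori bound of \cref{thm:ula-bias} then lets the contraction act on the already small function $K_h\mathbf 1_A$, so no split is needed. You get $C(h^\delta+h^{\delta-1}\epsilon^\delta)$, which implies the claim once $2\sqrt{h_0}\le e^{-1}$. There is no circularity, since \cref{thm:ula-bias} rests only on \cref{lem:boundary-charge} and \cref{lem:poisson-sup}. The paper's version does not need the global total-variation bound as an input; yours is shorter and log-free.

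A caveat applies equally to the paper's proof. The uniform envelope \eqref{eq:layer-onestep} fails for small $h$, and so does the reason you give for it, $f_0^{\rm sticky}\le C\,p_{\rm leave}$ on $[0,\epsilon]$. By the connection formula for $U$ used in the proof of \cref{lem:resolvent-regularity}, together with $1+c_\mu U_0=p_{\rm leave}$,
\[
f_0^{\rm sticky}(y)=p_{\rm leave}\,M(a,b,z_y)+c_\mu\tfrac{\Gamma(b-1)}{\Gamma(a)}\,z_y^{1-b}M(a-b+1,2-b,z_y).
\]
The second term is positive ($c_\mu<0$, $\Gamma(b-1)<0$). Since $\lvert c_\mu\rvert U_0\to1$, it is of order $(az_y)^{1-b}\asymp(y^2/h)^{1-\delta/2}$.

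This dominates $p_{\rm leave}\asymp h^{\delta/2}$ once $y\gg h^{1/(2-\delta)}$, so already on the routing layer $y\asymp h$, and it is of order one at $y\asymp\sqrt h$. Consistently, Brownian scaling shows that $P^{G=0}_{\sqrt h}\bigl(u_{T_\alpha}\in(0,2\sqrt h]\bigr)$ is bounded below uniformly in $h$, whereas \eqref{eq:layer-onestep} says it is $O(h^{\delta/2})$. The correct envelope is of order $\epsilon^\delta+\epsilon^2/h$; the $\epsilon^2/h$ comes from the killed-process density $\asymp y/h$ for starting points near $\sqrt h$.

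Fed this corrected envelope, your identity gives $C(h^\delta+h^{\delta-1}\epsilon^\delta+h^{\delta-2}\epsilon^2)$. That still proves the lemma for $\epsilon=O(h)$, including the case $\epsilon=c_0h$ used in Step 3 of the proof of \cref{thm:sharp-rate}; there the paper's log-split would only give $O(h^\delta\lvert\log h\rvert)$. For $h\ll\epsilon\le2\sqrt h$, however, neither argument as written establishes the stated bound. Your optional pointwise refinement relies on the same false inequality.
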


\begin{proof}  Exactly as in the proof of \cref{thm:ula-bias}, $\pi_h(\varphi)-\pi(\varphi)=\nu_h(g_h^\epsilon)$ for $g_h^\epsilon$ defined in \cref{lem:poisson-sup}. By \cref{lem:boundary-charge},
  \(
    |\pi_h(\varphi)-\pi(\varphi)|%
    \le\|\nu_h\|_V\,|g_h^\epsilon|_V,%
  \)
    and $|\pi_h-\pi|\bigl((0,\epsilon]\bigr)=\sup|\pi_h(\varphi)-\pi(\varphi)|$ over measurable $\varphi$ with $|\varphi|\le1$ vanishing off $(0,\epsilon]$. Hence,
   the supremum over $\varphi$ gives the claim if $|g_h^\epsilon|_V\le C\bigl(h^\delta+h^{\delta-1}\epsilon^\delta\,\lvert\log\epsilon\rvert\bigr)$. We prove this in two steps.

  \emph{One-step envelope.}  Since $K_h(x,\cdot)=P^{G=0}_{\phi_h(x)}(u_{T_\alpha}\in\cdot)$, the one-step layer bound \eqref{eq:layer-onestep} of \cref{lem:density-bound} (valid for $\epsilon\le2\sqrt h$) gives
  \begin{equation}\label{eq:onestep-envelope}
    \sup_{x\ge0}K_h\bigl(x,(0,\epsilon]\bigr)\le C_1\,\epsilon^\delta,
    \qquad \epsilon\le2\sqrt h,\ h\le h_0 .
  \end{equation}
  Hence, by stationarity, $\pi_h((0,\epsilon])\le C_1\epsilon^\delta$ and $|\pi_h(\varphi)|\le C_1\epsilon^\delta$.

  \emph{Occupation bound on the discrete Poisson solution.}  We have $g_h^\epsilon= \sum_{n\ge0}K_h^n\bigl(\varphi-\pi_h(\varphi)\bigr)$.  For $n\ge1$, conditioning on the last step and using \cref{eq:onestep-envelope}, $|K_h^n\varphi(x)|\le\sup_yK_h(y,(0,\epsilon])\le C_1\epsilon^\delta$. Together with $|\pi_h(\varphi)|\le C_1\epsilon^\delta$, we see that $|K_h^n\varphi(x)-\pi_h(\varphi)|\le 2C_1\epsilon^\delta$. From geometric erogidicity \cref{eq:ass-contraction}, $|K_h^n\varphi(x)-\pi_h(\varphi)|\le CV(x)(1-ch)^n$.  Splitting the series at $N\coloneqq \lceil(ch)^{-1}\delta\,\lvert\log\epsilon\rvert\rceil$ and using the smaller bound on each side,
  \[
    |g_h^\epsilon(x)|\le 2+2C_1\epsilon^\delta N+\frac{C}{ch}V(x)(1-ch)^N
    \le C\,V(x)\bigl(1+h^{-1}\epsilon^\delta\,\lvert\log\epsilon\rvert\bigr),
  \]
as required. 
\end{proof}

\begin{proof}[Proof of \cref{thm:sharp-rate}]
  Write $\Psi\coloneqq \Psi_h$ of \cref{eq:Psi-def} and set $c_0\coloneqq 1+2\|G'\|_\infty$, so that $\phi_h(x)=x-hG'(x)$ and $\phi_h(x)\ge x/2$ for $x\ge c_0h$.  All error terms below carry the factor $\snorm{\eta}_\infty$, which we suppress.  By \cref{lem:precond},
  \begin{equation}\label{eq:bias-split}
    \pi_h(\eta)-\pi(\eta)=\pi(\Psi\mathbf 1_{(0,\infty)})+(\pi_h-\pi)(\Psi\mathbf 1_{(0,\infty)}) .
  \end{equation}

  \emph{Step 1: envelopes for $\Psi$.}  For $x\ge c_0h$, since $g_\star'$ is locally absolutely continuous (\cref{lem:poisson-cont}),
  \begin{equation}\label{eq:Psi-int-rem}
    \Psi(x)=h^{-1}\int_x^{\phi_h(x)}\bigl(\phi_h(x)-y\bigr)\,g_\star''(y)\,dy.
  \end{equation}
  The bounds of \cref{lem:poisson-cont} and $|\phi_h(x)-x|\le h\|G'\|_\infty$ give
  \begin{equation}\label{eq:Psi-envelopes}
    |\Psi(x)|\le
    \begin{cases}
          C\,h\,x^{-\delta},&  x\in[c_0 h,1],\\
       C\,h,&  x\ge1.
    \end{cases} 
  \end{equation}
  On $(0,c_0h]$, the clamp satisfies $\phi_h(x)\in[0,x+L_h]$, so $|g_\star(x)-g_\star(\phi_h(x))|\le\int_0^{(1+c_0)h}|g_\star'|\le Ch^{2-\delta}$ and
  \begin{equation}\label{eq:Psi-layer}
    |\Psi(x)|\le C\bigl(x^{1-\delta}+h^{1-\delta}\bigr)\le Ch^{1-\delta},\qquad 0<x\le c_0h .
  \end{equation}

  \emph{Step 2: the main term in \cref{eq:bias-split}.}  We claim
  \begin{equation}\label{eq:main-term}
    \pi(\Psi\mathbf 1_{(0,\infty)})=K_\star\,\hat\eta(0)\,h\abs{\log h } +O(h).
  \end{equation}
  On $(0,c_0h]$, \cref{eq:Psi-layer,eq:sm-identities} give $\pi((0,c_0h])\le Ch^\delta$ and an $O(h)$ contribution; the same holds on $[1,\infty)$ by \cref{eq:Psi-envelopes} and $\pi([1,\infty))\le1$.  
  
  On $(c_0h,1]$, insert $g_\star''(y)=(1-\delta)c_F\,y^{-\delta}+E(y)$ from \cref{eq:pois-cusp} into \cref{eq:Psi-int-rem}.  The $E$-part of $\Psi$ is bounded by $Ch(1+x^{1-\delta})$, which integrates against $\rho$ to $O(h)$.  For the cusp part, expanding $y^{-\delta}$ about $x$ gives $\int_x^{\phi_h(x)}(\phi_h(x)-y)y^{-\delta}dy=\tfrac12h^2G'(x)^2x^{-\delta}(1+O(h/x))$. The $O(h/x)$ correction integrates against $\rho\le C x^{\delta-1}$ over $(c_0h,1]$ to $O(h^2\cdot h^{-1})=O(h)$ after the prefactor $h$. Putting this together,
  \[
    \pi\bigl(\Psi\mathbf 1_{(c_0h,1]}\bigr)
    =\frac{(1-\delta)c_F}{2}\,h\int_{c_0h}^1G'(x)^2\,x^{-\delta}\rho(x)\,dx+O(h).
  \]
  By \cref{eq:sm-identities}, $G'(x)^2x^{-\delta}\rho(x)=G'(0)^2\rho_\star\,x^{-1}+O(x^{-1}\cdot x)$ on $(0,1]$, where the remainder integrates to $O(1)$ and $\int_{c_0h}^1x^{-1}dx=\abs{\log h } +O(1)$.  Since $(1-\delta)c_F=(\delta-1)\hat\eta(0)/\mu$ and $\rho_\star=\mu\beta\pi(\{0\})$, the coefficient is
  $\tfrac12(\delta-1)\beta\,G'(0)^2\pi(\{0\})\,\hat\eta(0)=K_\star\hat\eta(0)$, proving \cref{eq:main-term}.

  \emph{Step 3: the difference term in \cref{eq:bias-split}.}  Decompose $(0,\infty)=(0,c_0h]\cup(c_0h,\sqrt h\,]\cup(\sqrt h,1]\cup(1,\infty)$ and bound each piece by $|(\pi_h-\pi)(\Psi\mathbf 1_A)|\le(\sup_A|\Psi|)\,|\pi_h-\pi|(A)$, with the envelopes \cref{eq:Psi-envelopes,eq:Psi-layer} for $\sup_A|\Psi|$ and \cref{lem:local-tv} for $|\pi_h-\pi|(A)$ near the boundary.
  By \cref{eq:Psi-layer} and \cref{lem:local-tv} at $\epsilon=c_0h$,
  $|(\pi_h-\pi)(\Psi\mathbf 1_{(0,c_0h]})|\le Ch^{1-\delta}\bigl(h^\delta+h^{\delta-1}h^\delta\abs{\log h } \bigr)=O(h)$.
  On the intervals $A_j=(\epsilon_j,\epsilon_{j+1}]$, $\epsilon_j=2^jc_0h\le\sqrt h$, \cref{eq:Psi-envelopes} and \cref{lem:local-tv} at $\epsilon_{j+1}$ give
  \[
    \bigl|(\pi_h-\pi)(\Psi\mathbf 1_{A_j})\bigr|
    \le Ch\epsilon_j^{-\delta}\bigl(h^\delta+h^{\delta-1}\epsilon_{j+1}^\delta\,\lvert\log\epsilon_{j+1}\rvert\bigr)
    \le C\bigl(h^{1+\delta}\epsilon_j^{-\delta}+h^\delta\abs{\log h } \bigr).
  \]
  Summing over the $O(\abs{\log h } )$ intervals, the first parts form a geometric series dominated by $O(h)$ and the second parts total $O(h^\delta\log^2(h))=O(h)$ (as $\delta>1$).
  On $(\sqrt h,1]$, $|\Psi|\le Ch^{1-\delta/2}$ by \cref{eq:Psi-envelopes}, so by \cref{eq:ula-tv},
  $|(\pi_h-\pi)(\Psi\mathbf 1_{(\sqrt h,1]})|\le Ch^{1-\delta/2}\cdot h^{\delta-1}=Ch^{\delta/2}$.
  On $(1,\infty)$, $|(\pi_h-\pi)(\Psi)|\le Ch\cdot h^{\delta-1}=O(h^\delta)$.
  Combining with \cref{eq:bias-split,eq:main-term}, we see $|\pi_h(\eta)-\pi(\eta)|\le C\bigl(h\abs{\log h } +h^{\delta/2}\bigr)\snorm{\eta}_\infty\le Ch^{\delta/2}\snorm{\eta}_\infty$. Then, taking the supremum over $\snorm{\eta}_\infty\le1$,
  \begin{equation}\label{eq:tv-boot}
    \|\pi_h-\pi\|_{\mathrm{TV}}\le C\,h^{\delta/2}.
  \end{equation}

  \emph{Step 4: re-substitution.}  Re-estimate the only piece that used the crude bound \cref{eq:ula-tv}, now with the sharpened \cref{eq:tv-boot}:
  $|(\pi_h-\pi)(\Psi\mathbf 1_{(\sqrt h,1]})|\le Ch^{1-\delta/2}\cdot h^{\delta/2}=O(h)$ (and the far-field piece improves likewise).  All parts of the difference term are now $O(h)$, so \cref{eq:bias-split,eq:main-term} give \cref{eq:sharp-law}.
\end{proof}

\bibliography{ref}
\end{document}